\begin{document}
\newcommand {\emptycomment}[1]{} %to remove paragraphs

\baselineskip=14pt
\newcommand{\nc}{\newcommand}
\newcommand{\delete}[1]{}
\nc{\mfootnote}[1]{\footnote{#1}} % Use this to show footnotes
\nc{\todo}[1]{\tred{To do:} #1}

\delete{
\nc{\mlabel}[1]{\label{#1}}  % Use this to suppress names
\nc{\mcite}[1]{\cite{#1}}  % Use this to suppress names
\nc{\mref}[1]{\ref{#1}}  % Use this to suppress names
\nc{\meqref}[1]{\ref{#1}} % Use this to suppress names
\nc{\mbibitem}[1]{\bibitem{#1}} % Use this to show number
}

%\delete{
\nc{\mlabel}[1]{\label{#1}  % Use the next two lines to show names
{\hfill \hspace{1cm}{\bf{{\ }\hfill(#1)}}}}
\nc{\mcite}[1]{\cite{#1}{{\bf{{\ }(#1)}}}}  % Use this lines to show names
\nc{\mref}[1]{\ref{#1}{{\bf{{\ }(#1)}}}}  % Use this lines to show names
\nc{\meqref}[1]{\eqref{#1}{{\bf{{\ }(#1)}}}} % Use this lines to show names
\nc{\mbibitem}[1]{\bibitem[\bf #1]{#1}} % Use this to show name
%}

%%%%%%%%%%%%%%%%%%%%%%%% Statements
\newtheorem{thm}{Theorem}[section]
\newtheorem{lem}[thm]{Lemma}
\newtheorem{cor}[thm]{Corollary}
\newtheorem{pro}[thm]{Proposition}
\theoremstyle{definition}
\newtheorem{defi}[thm]{Definition}
\newtheorem{ex}[thm]{Example}
\newtheorem{rmk}[thm]{Remark}
\newtheorem{pdef}[thm]{Proposition-Definition}
\newtheorem{condition}[thm]{Condition}

\renewcommand{\labelenumi}{{\rm(\alph{enumi})}}
\renewcommand{\theenumi}{\alph{enumi}}

\nc{\tred}[1]{\textcolor{red}{#1}}
\nc{\tblue}[1]{\textcolor{blue}{#1}}
\nc{\tgreen}[1]{\textcolor{green}{#1}}
\nc{\tpurple}[1]{\textcolor{purple}{#1}}
\nc{\btred}[1]{\textcolor{red}{\bf #1}}
\nc{\btblue}[1]{\textcolor{blue}{\bf #1}}
\nc{\btgreen}[1]{\textcolor{green}{\bf #1}}
\nc{\btpurple}[1]{\textcolor{purple}{\bf #1}}

\nc{\jt}[1]{\textcolor{yellow}{JT:#1}}
\nc{\cm}[1]{\textcolor{red}{CM:#1}}
\nc{\liu}[1]{\textcolor{blue}{Liu:#1}}
\nc{\jf}[1]{\textcolor{blue}{#1}}

%%%%%%%%%%%%%% Matrix symbols.

\nc{\twovec}[2]{\left(\begin{array}{c} #1 \\ #2\end{array} \right )}
\nc{\threevec}[3]{\left(\begin{array}{c} #1 \\ #2 \\ #3 \end{array}\right )}
\nc{\twomatrix}[4]{\left(\begin{array}{cc} #1 & #2\\ #3 & #4 \end{array} \right)}
\nc{\threematrix}[9]{{\left(\begin{matrix} #1 & #2 & #3\\ #4 & #5 & #6 \\ #7 & #8 & #9 \end{matrix} \right)}}
\nc{\twodet}[4]{\left|\begin{array}{cc} #1 & #2\\ #3 & #4 \end{array} \right|}

\nc{\rk}{\mathrm{r}}
\newcommand{\g}{\mathfrak g}
\newcommand{\h}{\mathfrak h}
\newcommand{\pf}{\noindent{$Proof$.}\ }
\newcommand{\frkg}{\mathfrak g}
\newcommand{\frkh}{\mathfrak h}
\newcommand{\Id}{\rm{Id}}
\newcommand{\gl}{\mathfrak {gl}}
\newcommand{\ad}{\mathrm{ad}}
\newcommand{\add}{\frka\frkd}
\newcommand{\frkb}{\mathfrak b}
\newcommand{\frkc}{\mathfrak c}
\newcommand{\frkd}{\mathfrak d}
\newcommand{\frkl}{\mathfrak l}
\newcommand{\frkr}{\mathfrak r}
\newcommand{\sgn}{\mathrm{sgn}}
\newcommand{\dM}{\mathrm{d}}
\newcommand{\perm}{\mathbb S}
\newcommand {\comment}[1]{{\marginpar{*}\scriptsize\textbf{Comments:} #1}}
%%%%%%%%%%%%%%%%%%%%%%% symbols

\nc{\tforall}{\text{ for all }}

\nc{\svec}[2]{{\tiny\left(\begin{matrix}#1\\
#2\end{matrix}\right)\,}}  % column vector
\nc{\ssvec}[2]{{\tiny\left(\begin{matrix}#1\\
#2\end{matrix}\right)\,}} % subscript column vector

\nc{\typeI}{local cocycle $3$-Lie bialgebra\xspace}
\nc{\typeIs}{local cocycle $3$-Lie bialgebras\xspace}
\nc{\typeII}{double construction $3$-Lie bialgebra\xspace}
\nc{\typeIIs}{double construction $3$-Lie bialgebras\xspace}

\nc{\bia}{{$\mathcal{P}$-bimodule ${\bf k}$-algebra}\xspace}
\nc{\bias}{{$\mathcal{P}$-bimodule ${\bf k}$-algebras}\xspace}

\nc{\rmi}{{\mathrm{I}}}
\nc{\rmii}{{\mathrm{II}}}
\nc{\rmiii}{{\mathrm{III}}}
\nc{\pr}{{\mathrm{pr}}}
\newcommand{\huaA}{\mathcal{A}}

\nc{\OT}{constant $\theta$-}
\nc{\T}{$\theta$-}
\nc{\IT}{inverse $\theta$-}

\nc{\pll}{\beta}
\nc{\plc}{\epsilon}

\nc{\ass}{{\mathit{Ass}}}
\nc{\lie}{{\mathit{Lie}}}
\nc{\comm}{{\mathit{Comm}}}
\nc{\dend}{{\mathit{Dend}}}
\nc{\zinb}{{\mathit{Zinb}}}
\nc{\tdend}{{\mathit{TDend}}}
\nc{\prelie}{{\mathit{preLie}}}
\nc{\postlie}{{\mathit{PostLie}}}
\nc{\quado}{{\mathit{Quad}}}
\nc{\octo}{{\mathit{Octo}}}
\nc{\ldend}{{\mathit{ldend}}}
\nc{\lquad}{{\mathit{LQuad}}}

 \nc{\adec}{\check{;}} \nc{\aop}{\alpha}
\nc{\dftimes}{\widetilde{\otimes}} \nc{\dfl}{\succ} \nc{\dfr}{\prec}
\nc{\dfc}{\circ} \nc{\dfb}{\bullet} \nc{\dft}{\star}
\nc{\dfcf}{{\mathbf k}} \nc{\apr}{\ast} \nc{\spr}{\cdot}
\nc{\twopr}{\circ} \nc{\tspr}{\star} \nc{\sempr}{\ast}
\nc{\disp}[1]{\displaystyle{#1}}
\nc{\bin}[2]{ (_{\stackrel{\scs{#1}}{\scs{#2}}})}  %binomial coeff
\nc{\binc}[2]{ \left (\!\! \begin{array}{c} \scs{#1}\\
    \scs{#2} \end{array}\!\! \right )}  %binomial coeff
\nc{\bincc}[2]{  \left ( {\scs{#1} \atop
    \vspace{-.5cm}\scs{#2}} \right )}  %binomial coeff
\nc{\sarray}[2]{\begin{array}{c}#1 \vspace{.1cm}\\ \hline
    \vspace{-.35cm} \\ #2 \end{array}}
\nc{\bs}{\bar{S}} \nc{\dcup}{\stackrel{\bullet}{\cup}}
\nc{\dbigcup}{\stackrel{\bullet}{\bigcup}} \nc{\etree}{\big |}
\nc{\la}{\longrightarrow} \nc{\fe}{\'{e}} \nc{\rar}{\rightarrow}
\nc{\dar}{\downarrow} \nc{\dap}[1]{\downarrow
\rlap{$\scriptstyle{#1}$}} \nc{\uap}[1]{\uparrow
\rlap{$\scriptstyle{#1}$}} \nc{\defeq}{\stackrel{\rm def}{=}}
\nc{\dis}[1]{\displaystyle{#1}} \nc{\dotcup}{\,
\displaystyle{\bigcup^\bullet}\ } \nc{\sdotcup}{\tiny{
\displaystyle{\bigcup^\bullet}\ }} \nc{\hcm}{\ \hat{,}\ }
\nc{\hcirc}{\hat{\circ}} \nc{\hts}{\hat{\shpr}}
\nc{\lts}{\stackrel{\leftarrow}{\shpr}}
\nc{\rts}{\stackrel{\rightarrow}{\shpr}} \nc{\lleft}{[}
\nc{\lright}{]} \nc{\uni}[1]{\tilde{#1}} \nc{\wor}[1]{\check{#1}}
\nc{\free}[1]{\bar{#1}} \nc{\den}[1]{\check{#1}} \nc{\lrpa}{\wr}
\nc{\curlyl}{\left \{ \begin{array}{c} {} \\ {} \end{array}
    \right .  \!\!\!\!\!\!\!}
\nc{\curlyr}{ \!\!\!\!\!\!\!
    \left . \begin{array}{c} {} \\ {} \end{array}
    \right \} }
\nc{\leaf}{\ell}       % number of leafs
\nc{\longmid}{\left | \begin{array}{c} {} \\ {} \end{array}
    \right . \!\!\!\!\!\!\!}
\nc{\ot}{\otimes} \nc{\sot}{{\scriptstyle{\ot}}}
\nc{\otm}{\overline{\ot}}
\nc{\ora}[1]{\stackrel{#1}{\rar}}
\nc{\ola}[1]{\stackrel{#1}{\la}}%${\Bbb Z}$
\nc{\pltree}{\calt^\pl}
\nc{\epltree}{\calt^{\pl,\NC}}
\nc{\rbpltree}{\calt^r}
\nc{\scs}[1]{\scriptstyle{#1}} \nc{\mrm}[1]{{\rm #1}}
\nc{\dirlim}{\displaystyle{\lim_{\longrightarrow}}\,}
\nc{\invlim}{\displaystyle{\lim_{\longleftarrow}}\,}
\nc{\mvp}{\vspace{0.5cm}} \nc{\svp}{\vspace{2cm}}
\nc{\vp}{\vspace{8cm}} \nc{\proofbegin}{\noindent{\bf Proof: }}
%\nc{\proofbegin}{\begin{proof}} % AMS command
\nc{\proofend}{$\blacksquare$ \vspace{0.5cm}}
%\nc{\proofend}{\end{proof}} %AMS command
\nc{\freerbpl}{{F^{\mathrm RBPL}}}
\nc{\sha}{{\mbox{\cyr X}}}  %used to be \cyr
\nc{\ncsha}{{\mbox{\cyr X}^{\mathrm NC}}} \nc{\ncshao}{{\mbox{\cyr
X}^{\mathrm NC,\,0}}}
\nc{\shpr}{\diamond}    %Shuffle product
\nc{\shprm}{\overline{\diamond}}    %Shuffle product
\nc{\shpro}{\diamond^0}    %Shuffle product
\nc{\shprr}{\diamond^r}     %product on controlled trees
\nc{\shpra}{\overline{\diamond}^r}
\nc{\shpru}{\check{\diamond}} \nc{\catpr}{\diamond_l}
\nc{\rcatpr}{\diamond_r} \nc{\lapr}{\diamond_a}
\nc{\sqcupm}{\ot}
\nc{\lepr}{\diamond_e} \nc{\vep}{\varepsilon} \nc{\labs}{\mid\!}
\nc{\rabs}{\!\mid} \nc{\hsha}{\widehat{\sha}}
\nc{\lsha}{\stackrel{\leftarrow}{\sha}}
\nc{\rsha}{\stackrel{\rightarrow}{\sha}} \nc{\lc}{\lfloor}
\nc{\rc}{\rfloor}
\nc{\tpr}{\sqcup}
\nc{\nctpr}{\vee}
\nc{\plpr}{\star}
\nc{\rbplpr}{\bar{\plpr}}
\nc{\sqmon}[1]{\langle #1\rangle}
\nc{\forest}{\calf}
\nc{\altx}{\Lambda_X} \nc{\vecT}{\vec{T}} \nc{\onetree}{\bullet}
\nc{\Ao}{\check{A}}
\nc{\seta}{\underline{\Ao}}
\nc{\deltaa}{\overline{\delta}}
\nc{\trho}{\tilde{\rho}}

\nc{\rpr}{\circ}
%\nc{\apr}{\cdot}
\nc{\dpr}{{\tiny\diamond}}
\nc{\rprpm}{{\rpr}}

%%%%%%%%%%%%%%%%%%%%% roman fonts, in alphabetic order
\nc{\mmbox}[1]{\mbox{\ #1\ }} \nc{\ann}{\mrm{ann}}
\nc{\Aut}{\mrm{Aut}} \nc{\can}{\mrm{can}}
\nc{\twoalg}{{two-sided algebra}\xspace}
\nc{\colim}{\mrm{colim}}
\nc{\Cont}{\mrm{Cont}} \nc{\rchar}{\mrm{char}}
\nc{\cok}{\mrm{coker}} \nc{\dtf}{{R-{\rm tf}}} \nc{\dtor}{{R-{\rm
tor}}}
\renewcommand{\det}{\mrm{det}}
\nc{\depth}{{\mrm d}}
\nc{\Div}{{\mrm Div}} \nc{\End}{\mrm{End}} \nc{\Ext}{\mrm{Ext}}
\nc{\Fil}{\mrm{Fil}} \nc{\Frob}{\mrm{Frob}} \nc{\Gal}{\mrm{Gal}}
\nc{\GL}{\mrm{GL}} \nc{\Hom}{\mrm{Hom}} \nc{\hsr}{\mrm{H}}
\nc{\hpol}{\mrm{HP}} \nc{\id}{\mrm{id}} \nc{\im}{\mrm{im}}
\nc{\incl}{\mrm{incl}} \nc{\length}{\mrm{length}}
\nc{\LR}{\mrm{LR}} \nc{\mchar}{\rm char} \nc{\NC}{\mrm{NC}}
\nc{\mpart}{\mrm{part}} \nc{\pl}{\mrm{PL}}
\nc{\ql}{{\QQ_\ell}} \nc{\qp}{{\QQ_p}}
\nc{\rank}{\mrm{rank}} \nc{\rba}{\rm{RBA }} \nc{\rbas}{\rm{RBAs }}
\nc{\rbpl}{\mrm{RBPL}}
\nc{\rbw}{\rm{RBW }} \nc{\rbws}{\rm{RBWs }} \nc{\rcot}{\mrm{cot}}
\nc{\rest}{\rm{controlled}\xspace}
\nc{\rdef}{\mrm{def}} \nc{\rdiv}{{\rm div}} \nc{\rtf}{{\rm tf}}
\nc{\rtor}{{\rm tor}} \nc{\res}{\mrm{res}} \nc{\SL}{\mrm{SL}}
\nc{\Spec}{\mrm{Spec}} \nc{\tor}{\mrm{tor}} \nc{\Tr}{\mrm{Tr}}
\nc{\mtr}{\mrm{sk}}

%%%%%%%%%%%%%%%%%% bold face
\nc{\ab}{\mathbf{Ab}} \nc{\Alg}{\mathbf{Alg}}
\nc{\Algo}{\mathbf{Alg}^0} \nc{\Bax}{\mathbf{Bax}}
\nc{\Baxo}{\mathbf{Bax}^0} \nc{\RB}{\mathbf{RB}}
\nc{\RBo}{\mathbf{RB}^0} \nc{\BRB}{\mathbf{RB}}
\nc{\Dend}{\mathbf{DD}} \nc{\bfk}{{\bf k}} \nc{\bfone}{{\bf 1}}
\nc{\base}[1]{{a_{#1}}} \nc{\detail}{\marginpar{\bf More detail}
    \noindent{\bf Need more detail!}
    \svp}
\nc{\Diff}{\mathbf{Diff}} \nc{\gap}{\marginpar{\bf
Incomplete}\noindent{\bf Incomplete!!}
    \svp}
\nc{\FMod}{\mathbf{FMod}} \nc{\mset}{\mathbf{MSet}}
\nc{\rb}{\mathrm{RB}} \nc{\Int}{\mathbf{Int}}
\nc{\Mon}{\mathbf{Mon}}
%\nc{\remark}{\noindent{\bf Remark: }}
\nc{\remarks}{\noindent{\bf Remarks: }}
\nc{\OS}{\mathbf{OS}} %free operated semigroup
\nc{\Rep}{\mathbf{Rep}}
\nc{\Rings}{\mathbf{Rings}} \nc{\Sets}{\mathbf{Sets}}
\nc{\DT}{\mathbf{DT}}

%%%%%%%%%%%%%%%%%%%Bbb fonts
\nc{\BA}{{\mathbb A}} \nc{\CC}{{\mathbb C}} \nc{\DD}{{\mathbb D}}
\nc{\EE}{{\mathbb E}} \nc{\FF}{{\mathbb F}} \nc{\GG}{{\mathbb G}}
\nc{\HH}{{\mathbb H}} \nc{\LL}{{\mathbb L}} \nc{\NN}{{\mathbb N}}
\nc{\QQ}{{\mathbb Q}} \nc{\RR}{{\mathbb R}} \nc{\BS}{{\mathbb{S}}} \nc{\TT}{{\mathbb T}}
\nc{\VV}{{\mathbb V}} \nc{\ZZ}{{\mathbb Z}}
\newcommand{\frka}{\mathfrak a}
\newcommand{\frks}{\mathfrak s}
%%%%%%%%%%%%%%%%%%% cal fonts

\nc{\calao}{{\mathcal A}} \nc{\cala}{{\mathcal A}}
\nc{\calc}{{\mathcal C}} \nc{\cald}{{\mathcal D}}
\nc{\cale}{{\mathcal E}} \nc{\calf}{{\mathcal F}}
\nc{\calfr}{{{\mathcal F}^{\,r}}} \nc{\calfo}{{\mathcal F}^0}
\nc{\calfro}{{\mathcal F}^{\,r,0}} \nc{\oF}{\overline{F}}
\nc{\calg}{{\mathcal G}} \nc{\calh}{{\mathcal H}}
\nc{\cali}{{\mathcal I}} \nc{\calj}{{\mathcal J}}
\nc{\call}{{\mathcal L}} \nc{\calm}{{\mathcal M}}
\nc{\caln}{{\mathcal N}} \nc{\calo}{{\mathcal O}}
\nc{\calp}{{\mathcal P}} \nc{\calq}{{\mathcal Q}} \nc{\calr}{{\mathcal R}}
\nc{\calt}{{\mathcal T}} \nc{\caltr}{{\mathcal T}^{\,r}}
\nc{\calu}{{\mathcal U}} \nc{\calv}{{\mathcal V}}
\nc{\calw}{{\mathcal W}} \nc{\calx}{{\mathcal X}}
\nc{\CA}{\mathcal{A}}

%%%%%%%%%%%%%%%%%%  frak fonts
\nc{\fraka}{{\mathfrak a}} \nc{\frakB}{{\mathfrak B}}
\nc{\frakb}{{\mathfrak b}} \nc{\frakd}{{\mathfrak d}}
\nc{\oD}{\overline{D}}
\nc{\frakF}{{\mathfrak F}} \nc{\frakg}{{\mathfrak g}}
\nc{\frakm}{{\mathfrak m}} \nc{\frakM}{{\mathfrak M}}
\nc{\frakMo}{{\mathfrak M}^0} \nc{\frakp}{{\mathfrak p}}
\nc{\frakS}{{\mathfrak S}} \nc{\frakSo}{{\mathfrak S}^0}
\nc{\fraks}{{\mathfrak s}} \nc{\os}{\overline{\fraks}}
\nc{\frakT}{{\mathfrak T}}
\nc{\oT}{\overline{T}}
%\nc{\frakx}{{\mathfrak x}}
\nc{\frakX}{{\mathfrak X}} \nc{\frakXo}{{\mathfrak X}^0}
\nc{\frakx}{{\mathbf x}}
%\nc{\frakTxo}{{\frakTx}^0}
\nc{\frakTx}{\frakT}      %All rooted trees, correspond to \ncsha(X)
\nc{\frakTa}{\frakT^a}        % rooted trees for \ncsha(A)
\nc{\frakTxo}{\frakTx^0}   % rooted trees for \ncshao(X)
\nc{\caltao}{\calt^{a,0}}   % rooted trees for \ncshao(A)
\nc{\ox}{\overline{\frakx}} \nc{\fraky}{{\mathfrak y}}
\nc{\frakz}{{\mathfrak z}} \nc{\oX}{\overline{X}}

\font\cyr=wncyr10

\nc{\al}{\alpha}
\nc{\lam}{\lambda}
\nc{\lr}{\longrightarrow}
\newcommand{\K}{\mathbb {K}}
\newcommand{\A}{\rm A}

\def\c{\star}
\def\la{\langle}
\def\ra{\rangle}

%%%%%%%%%%%%%%%%%%%%%%%%%%%%%%%%%%%%%%%%%%%%%%%%%%%%%%%%%%%%%%%%%%

\title{Admissible Poisson  bialgebras}

\author{Jinting Liang}
\address{Department of Mathematics, Michigan State University, East Lansing, MI, 48823, U.S.A }
\email{liangj26@msu.edu}

\author{Jiefeng Liu}
\address{School of Mathematics and Statistics, Northeast Normal University, Changchun 130024,
China} \email{liujf12@126.com}

\author{Chengming Bai}
\address{Chern Institute of Mathematics \& LPMC, Nankai University, Tianjin 300071, China }
\email{baicm@nankai.edu.cn}

%%\author{Benedikt Hurle}
%%\address{Chern Institute of Mathematics \& LPMC, Nankai University, Tianjin 300071, China}
%%\email{xyvbun@hotmail.de}

\begin{abstract}

An admissible Poisson algebra (or briefly, an adm-Poisson algebra)
gives an equivalent presentation with only one operation for
a Poisson algebra. %, whereas the latter is an algebra with a Lie
%algebra structure and a commutative associative algebra structure
%which are entwined by the Leibniz rule.
%There might be respective
%advantages when we study some properties or apply them in other
%topics in terms of different presentations.
%In this paper
We establish a bialgebra theory for adm-Poisson algebras
independently and systematically, including but beyond the
corresponding results on Poisson bialgebras given in \cite{NB1}.
Explicitly, we introduce the notion of adm-Poisson bialgebras
which are equivalent to Manin triples of adm-Poisson algebras as
well as Poisson bialgebras. The direct correspondence between
adm-Poisson bialgebras with one comultiplication and Poisson
bialgebras with one cocommutative and one anti-cocommutative
comultiplications generalizes and illustrates the
polarization-depolarization process
 in the context of bialgebras. The study of a special class of
adm-Poisson bialgebras which include the known coboundary Poisson
bialgebras in \cite{NB1} as a proper subclass in general,
illustrating an advantage in terms of the presentation with one
operation, leads to the introduction of adm-Poisson Yang-Baxter
equation in an adm-Poisson algebra. %which is an analogue of the
%classical Yang-Baxter equation in a Lie algebra or the associative
%Yang-Baxter equation in an associative algebra.
It is an unexpected consequence that both the adm-Poisson
Yang-Baxter equation and the associative Yang-Baxter equation have
the same form and thus it motivates and simplifies the involved
study from the study of the associative Yang-Baxter equation,
which is another advantage in terms of the presentation with one
operation.
 A skew-symmetric solution of adm-Poisson Yang-Baxter equation
gives an adm-Poisson bialgebra. Finally the notions of an
$\mathcal O$-operator of an adm-Poisson algebra and a
pre-adm-Poisson algebra are introduced to construct skew-symmetric
solutions of adm-Poisson Yang-Baxter equation and hence adm-Poisson bialgebras.
Note that a
pre-adm-Poisson algebra gives an equivalent presentation for a
pre-Poisson algebra introduced by Aguiar.

\end{abstract}

\subjclass[2010]{16T10, 16T25, 17B63}

\keywords{Poisson algebra, bialgebra, classical Yang-Baxter
equation, $\mathcal{O}$-operator}

\maketitle

\tableofcontents

\numberwithin{equation}{section}

\tableofcontents
\numberwithin{equation}{section}
\allowdisplaybreaks

\section{Introduction}

%\cm{the notations for the elements in an adm-Poisson algebra is
%totally confused, sometimes uses $x,y,z$, sometimes, $a,b,c$. I
%suggest to use uniform expression!}

%\cm{It is acceptable to use $a,b,c$ for elements in an adm-Poisson
%algebra, whereas $x,y,z$ for elements in a Poisson algebra (it is
%necessary to denote the difference?). But for only adm-Poisson
%algebras, the notations should be uniformly.}

%\jt{Haven't done yet but I will go back to fix this after
%completing the main part of ``coboundary" adm-Poisson bialgebras.}

A Poisson algebra whose name comes from the French mathematician
Sim\'{e}on Poisson, is an algebra with a Lie algebra structure and
a commutative associative algebra structure which are entwined by
the Leibniz rule. Poisson algebras appear in a lot of fields such
as Poisson geometry \cite{Wei77,Vaisman1}, classical and quantum mechanics \cite{Arn78,Dirac64,OdA},  algebraic geometry \cite{GK04,Pol97}, quantization theory \cite{Hue90,Kon03} and quantum groups \cite{CP1,Dr87}.

%\cm{for every mentioned topic, some references should be added,
%otherwise, delete}

%\jt{Haven't done yet. I will double check afterwards.}

\begin{defi}{\rm(\cite{Li77,Wei77})}
Let $P$ be a vector space equipped with two bilinear operations $
[\;,\;],\circ :P\otimes P\to P$. $(P,[\;,\;],\circ)$ is called a
\textbf{Poisson algebra} if $(P,[\;,\;])$ is a Lie algebra,
$(P,\circ)$ is a commutative associative algebra and
\begin{equation}\label{leibniz rule}
[x,y\circ z]=[x,y]\circ z+y\circ [x,z],\quad \forall x,y,z\in P.
\end{equation}
%A \textbf{homomorphism} between two Poisson algebras is defined as a linear map between the two Poisson algebras that preserves the corresponding operations.
\end{defi}

It is remarkable that there is an equivalent presentation for a
Poisson algebra $(P,[\;,\;],\circ)$ with one operation as follows.

\begin{defi} {\rm (\cite{GR06})}
Let $P$ be a vector space equipped with one bilinear operation
$\c:P\otimes P\rightarrow P$. We call $(P,\c)$ an {\bf admissible
Poisson algebra} if the following equation holds:
\begin{equation}
(x\c y)\c z=x\c(y\c z)-\frac{1}{3}(-x\c(z\c y)+z\c(x\c y)+y\c(x\c
z)-y\c(z\c x)),\;\;\forall x,y,z\in P.\label{c1}
\end{equation}
\end{defi}

\begin{pro}{\rm (\cite{MR06})} If $(P,[\;,\;],\circ)$ is a
Poisson algebra, then $(P,\c)$ is an admissible Poisson algebra,
which is called the {\bf corresponding admissible Poisson
algebra}, where the multiplication $\c$ is defined by
\begin{equation}\label{eq:PA-sPA}
x\star y=x\circ y+[x,y],\;\;\forall x,y\in P.\end{equation}
Conversely, if $(P,\c)$ is an admissible Poisson algebra, then
$(P,[\;,\;],\circ)$ is a Poisson algebra, which is called the {\bf
corresponding Poisson algebra}, where the multiplication $\circ$
and the bracket operation $[\;,\;]$ are respectively defined by
\begin{equation}\label{eq:sPA-PA}
 x\circ
y=\frac{1}{2}(x\c y+y\c x),\;\;\; [x,y]=\frac{1}{2}(x\c y-y\c
x),\;\;\forall x,y\in P.
\end{equation}
\end{pro}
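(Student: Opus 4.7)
The proposition asserts that a Poisson algebra and an admissible Poisson algebra are equivalent presentations of the same structure, related by the polarization/depolarization formulas \eqref{eq:PA-sPA} and \eqref{eq:sPA-PA}. The plan is to prove both directions by direct substitution, exploiting the fact that any bilinear operation $\star$ decomposes uniquely as $\star = \circ + [\,,\,]$ with $\circ$ symmetric and $[\,,\,]$ skew-symmetric.

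For the forward direction, starting from a Poisson algebra $(P,[\,,\,],\circ)$, I define $x \star y := x \circ y + [x,y]$. Commutativity of $\circ$ and skew-symmetry of $[\,,\,]$ immediately show that the symmetric and skew-symmetric parts of $\star$ recover $\circ$ and $[\,,\,]$, so \eqref{eq:sPA-PA} holds automatically. To verify identity \eqref{c1}, I expand $(x \star y) \star z$ and each of the five triple products on its right-hand side using the definition of $\star$, obtaining eight terms apiece in the operations $\circ$ and $[\,,\,]$. I then split the resulting identity into its symmetric and skew-symmetric parts in the argument slots. The symmetric part should reduce to the associativity of $\circ$, and the skew-symmetric part should reduce to a combination of the Jacobi identity for $[\,,\,]$ and the Leibniz rule \eqref{leibniz rule}. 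The factor $-\frac{1}{3}$ in \eqref{c1} is precisely what makes the combinatorial coefficients balance.

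For the converse, I start from $(P,\star)$ satisfying \eqref{c1} and define $\circ$ and $[\,,\,]$ by \eqref{eq:sPA-PA}. By construction $\circ$ is commutative, $[\,,\,]$ is skew-symmetric, and $x \star y = x \circ y + [x,y]$. The three Poisson axioms are then extracted from \eqref{c1} by appropriate symmetrizations over the variables: fully symmetrizing in $(x,y,z)$ should yield the associativity of $\circ$; cyclically summing a suitably antisymmetrized form should yield the Jacobi identity for $[\,,\,]$; and a mixed operation, symmetric in two slots and skew-symmetric in the third, should yield the Leibniz rule \eqref{leibniz rule}.

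The main technical obstacle is the bookkeeping of the eight-term expansions and verifying that the $\frac{1}{3}$ coefficient balances the combinatorial factors arising from the symmetrizations. Conceptually, the key observation is that the \emph{single} identity \eqref{c1}, although it appears weaker than imposing three independent axioms, is in fact equivalent to them once $\star$ is decomposed into its symmetric and skew-symmetric components. A clean way to organize the calculation is to form the difference of the two sides of \eqref{c1}, split it into its symmetric and skew-symmetric parts in $(x,y,z)$, and identify each part with the corresponding Poisson axiom, so that both implications are verified in a single computation.
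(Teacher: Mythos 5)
The paper offers no proof of this proposition---it is quoted from \cite{MR06}---so your proposal stands on its own. The forward direction of your plan is sound: substituting $x\star y=x\circ y+[x,y]$ into \eqref{c1} and expanding, the parenthesized five-term sum collapses (modulo commutativity, associativity and \eqref{leibniz rule}) to $3[y,[x,z]]$, which the factor $\tfrac{1}{3}$ cancels, and the remaining comparison of the two sides is exactly associativity, Leibniz and Jacobi. That is a routine verification and your outline of it is correct.

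The gap is in the converse, and it is not mere bookkeeping. Set $A(x,y,z)=(x\circ y)\circ z-x\circ(y\circ z)$, let $J$ be the Jacobiator, and let $L(x,y,z)=[x,y\circ z]-[x,y]\circ z-y\circ[x,z]$. With $\star=\circ+[\;,\;]$, identity \eqref{c1} expands to
\[
2A(x,y,z)+J(x,y,z)-2L(x,y,z)-L(z,x,y)=0 .
\]
Your recipe ``fully symmetrize in $(x,y,z)$ to obtain associativity of $\circ$'' fails: since $\circ$ is commutative, $A$ is antisymmetric under $x\leftrightarrow z$, and $J$ is alternating, so full symmetrization annihilates \emph{both} $A$ and $J$ and returns only $[x,y\circ z]+[y,x\circ z]+[z,x\circ y]=0$, which is a consequence of Leibniz, not associativity. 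A correct extraction runs as follows: symmetrizing in $x$ and $z$ kills the $A$ and $J$ terms and (using the symmetry of $L$ in its last two slots) leaves $L(x,y,z)+L(z,x,y)=0$; thus $L$ changes sign under a $3$-cycle, so $L=-L$ and $L=0$, which is the Leibniz rule. The identity then reduces to $2A+J=0$; symmetrizing in $x,y$ kills $J$ and shows $A$ is alternating, and since the cyclic sum of the associator of a commutative product vanishes identically, $3A=0$, hence $A=0$ and then $J=0$. Without an argument of this kind the converse is unproved, and the specific symmetrizations you propose would not deliver the three axioms.
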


The structure of an admissible Poisson algebra was given
explicitly in \cite{MR06} by a polarization-depolarization trick
which was also independently employed in \cite{LiLo}, that is,
polarization interprets structures with one operation as
structures with one commutative and anticommutative operations,
whereas conversely depolarization interprets structures with one
commutative and anticommutative operations as structures with one
operation (\cite{LV,OPV}). There has not been a formal notion for
such a structure until the present notion was given in
\cite{GR06}. To avoid confusion, we call an admissible Poisson
algebra briefly  an {\bf adm-Poisson algebra},  whereas an
ordinary Poisson algebra is still called a {\bf Poisson algebra}.
Moreover, same presentations for Poisson superalgebras were given
in \cite{Rem12} and there is a work on nonassociative Poisson
algebras by presenting an approach of the nonassociative
admissible Poisson algebra which is called weakly associative
algebra in \cite{Rem21}.

Although the two presentations are equivalent, as pointed out in
\cite{MR06}, ``this change of perspective might sometimes lead to
new insights and results", and hence there might be respective
advantages when we study some properties or apply them in other
topics in terms of different presentations.

In this paper we establish a bialgebra theory for adm-Poisson
algebras. We would like to point out that there is a bialgebra
theory for Poisson algebras in terms of the usual presentation in
\cite{NB1}. Note that there is also a bialgebra theory for
noncommutative Poison algebras (\cite{LBS}). The study of the
bialgebra theory in terms of adm-Poisson algebras is completely
independent and systematic, including but beyond the corresponding
results given in \cite{NB1}.

Explicitly, we still take a similar approach as of the study on
Lie bialgebras (\cite{CP1,Dr}), that is, the compatibility
condition is still decided by an analogue of a Manin triple of Lie
algebras, which we call a Manin triple of adm-Poisson algebras.
The notion of an adm-Poisson bialgebra is thus introduced as an
equivalent structure of a Manin triple of adm-Poisson algebras,
which is interpreted in terms of matched pairs of adm-Poisson
algebras. These results are parallel to their counterparts in
terms of the usual presentation of Poisson algebras and hence an
adm-Poisson bialgebra is exactly a Poisson bialgebra given in
\cite{NB1}. Furthermore, there is a direct correspondence between
adm-Poisson bialgebras with one comultiplication and Poisson
bialgebras with one cocommutative and one anti-cocommutative
comultiplications, generalizing and illustrating the
polarization-depolarization process in the context of bialgebras.

The results in terms of the presentation with one operation are
``beyond" the usual presentation appear in the study of a special
class of adm-Poisson bialgebras which is similar to the study of coboundary Lie
bialgebras for Lie algebras (\cite{CP1,Dr}) or coboundary infinitesimal bialgebras for associative algebras
(\cite{Aguiar1,Bai2}).
% or coboundary Poisson bialgebras for
%Poisson algebras (\cite{NB1}).
The corresponding Poisson bialgebras of such adm-Poisson
bialgebras include the coboundary Poisson bialgebras introduced in
\cite{NB1} as a proper subclass in general, that is, there might
exist some new examples of non-coboundary Poisson bialgebras from
such adm-Poisson bialgebras, although we have not obtained such an
example explicitly. This feature illustrates an advantage in terms
of the presentation with one operation, as well as might help
demonstrate more implications in some applications.

The study of such adm-Poisson bialgebras also leads to the
introduction of adm-Poisson Yang-Baxter equation in an adm-Poisson
equation which is an analogue of the classical Yang-Baxter
equation in a Lie algebra or the associative Yang-Baxter equation
in an associative algebra. It is also interesting to see that the
set of solutions of adm-Poisson Yang-Baxter equation in an
adm-Poisson algebra includes the set of solutions of Poisson
Yang-Baxter equation given in \cite{NB1} in the corresponding
Poisson algebra as a subset in general, whereas the two sets
coincide under certain conditions including the skew-symmetric
case. Moreover, a skew-symmetric solution of adm-Poisson
Yang-Baxter equation gives an  adm-Poisson bialgebra.

There is an unexpected consequence that both the adm-Poisson
Yang-Baxter equation and the associative Yang-Baxter equation
 have the same form. It is partly due to the
fact that both adm-Poisson and associative algebras have the same
forms of dual representations. Therefore some properties of
adm-Poisson Yang-Baxter equation can be obtained directly from the
corresponding ones of associative Yang-Baxter equation.
Consequently, the study on adm-Poisson algebras involving them
would get more motivations or simplicities from the study of the
associative Yang-Baxter equation, which is another advantage in
terms of the presentation with one operation.

In particular, as for the study on the associative Yang-Baxter
equation (\cite{Aguiar1,Bai2,BGN1}), in order to obtain
skew-symmetric solutions of adm-Poisson Yang-Baxter equation, we
introduce the notion of an $\mathcal O$-operator of an
adm-Poisson algebra which is an analogue of an $\mathcal
O$-operator of a Lie algebra introduced by Kupershmidt in
\cite{Kupershmidt} as a
 natural generalization of the classical Yang-Baxter equation in a Lie algebra (also see \cite{B1}), and a pre-adm-Poisson
 algebra as an analogue of a dendriform algebra introduced by
 Loday (\cite{Loday}).
The former gives a construction of skew-symmetric solutions of
adm-Poisson Yang-Baxter equation  in a semi-direct product
adm-Poisson algebra, whereas the latter gives a representation of
the sub-adjacent adm-Poisson algebra such that the identity is a
natural $\mathcal O$-operator associated to it. Therefore a
construction of skew-symmetric solutions of adm-Poisson
Yang-Baxter equation and hence adm-Poisson bialgebras from
pre-adm-Poisson algebras is given. Note that a pre-adm-Poisson
algebra gives an equivalent presentation for a pre-Poisson algebra
introduced by Aguiar in \cite{Aguiar2}.

This paper is organized as follows. In Section~\ref{pre}, we
recall some facts on Poisson bialgebras and coboundary Poisson
bialgebras given in \cite{NB1}. In Section~\ref{sp}, we introduce
the notions of representations and matched pairs of adm-Poisson
algebras. The dual representation of a representation of an
adm-Poisson algebra is also given. In Section 4, the notions of
Manin triples of adm-Poisson algebras and adm-Poisson bialgebras
are introduced. Their equivalence is interpreted in terms of
matched pairs of adm-Poisson algebras. We also give a direct
correspondence between adm-Poisson bialgebras and Poisson
bialgebras. In Section 5, we consider a special class of
adm-Poisson bialgebras. %as analogues of coboundary Lie bialgebras.
They include coboundary Poisson bialgebras as a proper subclass.
This study also leads to the introduction of adm-Poisson
Yang-Baxter equation whose skew-symmetric solutions give
adm-Poisson bialgebras. In Section 6,  we introduce the notions of
$\mathcal{O}$-operators of adm-Poisson algebras and
pre-adm-Poisson algebras, and give
constructions of skew-symmetric solutions of adm-Poisson
Yang-Baxter equation from these structures.

%then interpret their relationships with
%adm-Poisson Yang-Baxter equation. In particular, we give
%constructions of skew-symmetric solutions of adm-Poisson
%Yang-Baxter equation from $\mathcal O$-operators of adm-Poisson
%algebras and pre-adm-Poisson algebras.

We adopt the following conventions and notations.

\begin{enumerate}
%\item Let $L_\diamond(x)$ and $R_\diamond(x)$ denote the left and right multiplication operator respectively, that is, $L_\diamond(x)y=R_\diamond(y)x=x\diamond y$ for all $x,y\in A$. In particular, if $(\mathfrak{g},[\;,\;])$ is a Lie algebra, we let ${\rm ad}_{[\;,\;]}(x)={\rm ad}(x)$ denote the adjoint operator, that is, ${\rm ad}_{[\;,\;]}(x)(y)={\rm ad}(x)(y)=[x,y]$ for all $x,y\in \mathfrak{g}$.

\item[(1)] Let $(A,\diamond)$ be a vector space with a bilinear
operation $\diamond :A\otimes A\to A$. Let $r=\begin{matrix}\sum_i
x_i\otimes y_i\in A\otimes A\end{matrix}$. Set
      \begin{equation}
        r_{12}=\sum_i x_i\otimes y_i\otimes 1,\; r_{13}=\sum_i x_i\otimes 1\otimes y_i, \; r_{23}=\sum_i 1\otimes x_i\otimes y_i,
      \end{equation}
     where $1$ is the unit if $(A,\diamond)$ is unital or a symbol playing a similar role as the unit for the non-unital cases. The operation between two $r$s is given in an obvious way. For
     example,
        \begin{eqnarray}
        &&r_{12}\diamond r_{13}=\sum_{i,j}x_i\diamond x_j\otimes y_i\otimes y_j,\quad
        r_{13}\diamond r_{23}=\sum_{i,j}x_i\otimes x_j\otimes y_i\diamond
        y_j,\nonumber\\
       && r_{23}\diamond r_{12}=\sum_{i,j}x_i\otimes x_j\diamond y_i\otimes y_j.
        \end{eqnarray}

\item[(2)] Let $V$ be a vector space. Let $\tau:V\otimes V\to
V\otimes V$ be the twisting operator defined as
    \begin{equation}
    \tau(u\otimes v)=v\otimes u, \quad \forall u,v \in V.
    \end{equation}
     %We call $r=\sum_i x_i\otimes y_i\in V\otimes V$ skew-symmetric (resp. symmetric) if $r=-\tau(r)$ (resp. $r=\tau(r)$).
     \item[(3)] Let $V$ be a vector space. For an $r\in V\otimes V$, the linear map $r^\sharp:V^*\longrightarrow V$ is given by
$$\langle r^\sharp(u^*),v^*\rangle=\langle r, u^*\otimes v^*\rangle,\quad \forall u^*,v^*\in P^*.$$
We say that $r\in V\otimes V$ is nondegenerate, if the linear map $r^\sharp$ is an isomorphism.

\item[(4)] Let $V_1,V_2$ be two vector spaces and $T:V_1\to V_2$
be a linear map. Denote the dual map by $T^*:V_2^*\to
V_1^*$, defined by
    \begin{equation}
    \langle v_1,T^*(v_2^*)\rangle =\langle T(v_1),v_2^*\rangle,\quad \forall v_1 \in V_1, v_2^* \in V_2^*.
    \end{equation}
\item[(5)] Let $V$ be a vector space and $A$ be a vector space (usually with some bilinear operations). For a linear map $\rho:A\to
{\rm End}_{\mathbb F}(V)$, define a linear map $\rho^*:A\to {\rm
End}_{\mathbb F}(V^*)$ by
    \begin{equation}
    \langle \rho^*(x)v^*,u\rangle=-\langle v^*,\rho(x)u\rangle, \quad\forall x\in A,u\in V,v^* \in V^*.
    \end{equation}

\end{enumerate}

Throughout this paper, all vector spaces are finite-dimensional
over a base field $\mathbb F$, % whose characteristic is not $2$,
although many results still hold in the infinite dimension.

\section{Some facts on Poisson bialgebras and coboundary Poisson bialgebras}\label{pre}

In this section, we recall some facts on Poisson bialgebras and coboundary Poisson bialgebras given in
 \cite{NB1}. %Before introducing the notion of Poisson bialgebras, we first
%recall the notions of Lie bialgebras and infinitesimal bialgebras.

Let $(\mathfrak{g},[\;,\;])$ be a Lie algebra. Let ${\rm ad}(x)$
denote the adjoint operator, that is, ${\rm ad}(x)(y)=[x,y]$ for
all $x,y\in \frak g$. Let ${\rm ad}:\frak g\rightarrow
\End_{\mathbb F}(\frak g)$ with $x\rightarrow {\rm ad}(x)$ be the
adjoint representation of $\frak g$. A {\bf Lie bialgebra}
structure on $\mathfrak{g}$ is a linear map
$\delta:\mathfrak{g}\to \wedge^2\mathfrak{g}$ such that
$\delta^*:\wedge^2\frak g^*\to \frak g^*$ defines a Lie algebra
structure on $\frak g^*$  and $\delta$ is a $1$-cocycle of
$\mathfrak{g}$ with the coefficient in the representation
$(\g\otimes\g;\ad\otimes {\rm id}+{\rm id}\otimes\ad)$, i.e.
\begin{equation}\label{coboundary Lie}
\delta([x,y])=({\rm ad}(x)\otimes{\rm id}+{\rm id}\otimes{\rm ad}(x))\delta(y)-({\rm ad}(y)\otimes{\rm id}+{\rm id}\otimes{\rm ad}(y))\delta(x),\quad \forall x,y\in \mathfrak{g}.
\end{equation}

Let $(A,\circ)$ be an associative algebra. Let $L_\circ(x)$ and
$R_\circ(x)$ be the left and the right multiplication operators
respectively, that is, $L_\circ(x)(y)=R_\circ(y)(x)=x\circ y$ for
all $x,y\in A$. Let $L_\circ,R_\circ:A\rightarrow \End_{\mathbb
F}(A)$ be two linear maps with $x\rightarrow L_\circ(x)$ and $
x\rightarrow R_\circ(x)$ respectively. An {\bf infinitesimal
bialgebra} structure on $A$ is a linear map $\Delta:A\to A\otimes
A$ such that $\Delta^*:A^*\otimes A^*\to A^*$ defines an
associative algebra structure on $A^*$  and $\Delta$ satisfies
%\liu{$\Delta$ is a
%$1$-cocycle on $A$ with
%the coefficient in the representation $(A \otimes A;{\rm
%id}\otimes L_\circ,R_\circ\otimes {\rm id})$, i.e.
\begin{equation}\label{infinitesimal bialg1}
\Delta(x\circ y)=({\rm id}\otimes L_\circ(x))\Delta(y)+(R_\circ(y)\otimes {\rm id})\Delta(x),\;\;\forall x, y\in A.
\end{equation}

\delete{

\liu{The Harrison cohomology theory for a commutative algebra
$(A,\cdot)$ associated to a module $(V;\mu)$ is given as follows.
A permutation $\sigma\in\perm_n$ is called an $(i,n-i)$-unshuffle
if $\sigma(1)<\cdots<\sigma(i)$ and
$\sigma(i+1)<\cdots<\sigma(n)$. The set of all
$(i,n-i)$-unshuffles will be denoted by $\perm_{(i,n-i)}$. Denote
by $C_{\rm H}^n(A,V)$ the space of $n$-cochains such that for all
$f\in\Hom(\otimes^n A,V)$
$$\sum_{\sigma\in\perm_{(i,n-i)}}\sgn(\sigma)f(x_{\sigma^{-1}(1)},\ldots,x_{\sigma^{-1}(i)},x_{\sigma^{-1}(i+1)},\ldots,x_{\sigma^{-1}(n)})=0,\quad 0<i< n,$$
where $x_1,x_2,\ldots,x_n \in A$. The corresponding Harrison coboundary operator
$\dM_{\rm H}:C_{\rm H}^n(A,V)\longrightarrow C_{\rm H}^{n+1}(A,V)$ is given by
\begin{eqnarray*}
\dM_{\rm H}f(x_1,\ldots,x_{n+1})&=&\mu({x_1}) f(x_2,\ldots,x_{n+1})+\sum_{i=1}^n(-1)^if(x_1,\ldots,x_{i-1},x_i\cdot x_{i+1},\ldots,x_{n+1})\\
&&+(-1)^{n+1}\mu({ x_{n+1}})f(x_1,\ldots,x_n)
\end{eqnarray*}
for all $f\in C_{\rm H}^n(A,V)$ and $x_1,x_2,\cdots,x_{n+1}\in A$.}

\liu{Note that $1$-cocycle in Harrison cohomology is given by
$$f(x\cdot y)=\mu(x)f(y)-\mu(y)f(x).$$
When $A$ is a commutative algebra, the Eq. $(\ref{infinitesimal
bialg1})$ is not the $1$-cocycle in Harrison cohomology.  The Eq.
$(\ref{infinitesimal bialg1})$ is just the $1$-cocycle in
Hochschild cohomology for the commutative algebra $A$. Thus the
commutative associative bialgebras don't have a cohomology
explanation in the context of Harrison cohomology theory.
Similarly, the commutative Poisson bialgebras don't have a
cohomology explanation in the context of commutative Poisson
cohomology theory. }}
%%and
%%\begin{equation}\label{requirement1}
%%    (L_\circ(a)\otimes {\rm id}-{\rm id}\otimes R_\circ(a))\Delta(b)+\tau((L_\circ(b)\otimes {\rm id}-{\rm id}\otimes R_\circ(b))\Delta(a))=0,
%%\end{equation}

%%We note that a commutative and cocommutative infinitesimal bialgebra on a commutative associative algebra automatically satisfy Eq. $(\ref{requirement1})$.
%% Lie-Poisson bialgebra P7
\begin{defi}
Let $(P,[\;,\;],\circ)$ be a Poisson algebra. Let
$\delta:P\to \wedge^2P$ and $\Delta:P\to P\otimes P$ be two linear maps such that
$\delta^*:\wedge^2 P^*\to P^*$ defines a Lie algebra structure
on $P^*$, $\Delta^*:P^*\otimes P^*\to P^*$ defines a commutative
associative algebra structure on $P^*$, and they satisfy the
following compatible condition:
    \begin{equation}
    ({\rm id}\otimes \Delta)\delta(x)=(\delta\otimes {\rm id})\Delta(x)+(\tau\otimes {\rm id})({\rm id}\otimes \delta)\Delta(x),\quad \forall x\in P.
    \end{equation}
    %\liu{Let $(P,[\;,\;],\circ)$ be a Poisson algebra. Let
%$\delta:P\to \wedge^2P$ and $\Delta:P\to P\otimes P$ be two linear maps such that $(P^*,\delta^*,\Delta^*)$ is a Poisson algebra.}
If in addition, $(P,[\;,\;],\delta)$ is a Lie bialgebra,
$(P,\circ,\Delta)$ is an infinitesimal bialgebra, and $\delta$ and
$\Delta$ are compatible in the following sense
 \begin{eqnarray}
 \label{c5.1}\delta(x\circ y)&=&(L_\circ(x)\otimes {\rm id})\delta(y)+(L_\circ(y)\otimes {\rm id})\delta(x)+({\rm id}\otimes{\rm ad}(x))\Delta(y)\\
 \nonumber&&+({\rm id}\otimes {\rm ad}(y))\Delta(x),\\
 \label{c5.2}\Delta([x,y])&=&({\rm ad}(x)\otimes {\rm id}+{\rm id}\otimes{\rm ad}(x))\Delta(y)+(L_\circ(y)\otimes {\rm id}-{\rm id}\otimes L_\circ(y))\delta(x)
 \end{eqnarray}
for all $x,y\in P$, then $(P,[\;,\;],\circ,\delta,\Delta)$ is
called a \textup{\textbf{Poisson bialgebra}}.
\end{defi}

%Next we recall some important results of coboundary Lie bialgebras
%and coboundary infinitesimal bialgebras. %For example, they lead to
%the introduction of the famous classical Yang-Baxter equation
%(CYBE) (cf. \cite{CP1}) and associative Yang-Baxter equation
%(AYBE) (cf. \cite{Aguiar1}) respectively.

A Lie bialgebra $(\mathfrak{g},\delta)$ is called \textbf{coboundary} if %$\delta$ is a 1-coboundary, that is,
there exists an $r\in \mathfrak{g}\otimes \mathfrak{g}$ such that
\begin{equation}\label{coboundary Lie2}
\delta(x)=({\rm ad}(x)\otimes{\rm id}+{\rm id}\otimes{\rm ad}(x))r,\quad \forall x\in \mathfrak{g}.
\end{equation}
In this case, $\delta$ automatically satisfies Eq. $(\ref{coboundary Lie})$. %It is usually denoted by $(\mathfrak{g},[\;,\;],r)$.

Let $\mathfrak{g}$ be a Lie algebra and $r\in
\mathfrak{g}\otimes\mathfrak{g}$. The linear map $\delta$ defined
by Eq. $(\ref{coboundary Lie2})$ makes $(\mathfrak{g},\delta)$
become a Lie bialgebra if and only if the following conditions are
satisfied:
\begin{align}
&({\rm ad}(x)\otimes{\rm id}+{\rm id}\otimes {\rm ad}(x))(r+\tau(r))=0,\\
&({\rm ad}(x)\otimes{\rm id}\otimes{\rm id}+{\rm id}\otimes {\rm ad}(x)\otimes{\rm id}+{\rm id}\otimes{\rm id}\otimes {\rm ad}(x))\textup{\textbf{C}}(r)=0
\end{align}
for all $x\in \mathfrak{g}$, where
$\textup{\textbf{C}}(r)=[r_{23}, r_{12}]+[r_{23},r_{13}]+[r_{13},r_{12}]$.

In particular, the following equation:
\begin{equation}
\textup{\textbf{C}}(r)=[r_{23}, r_{12}]+[r_{23},r_{13}]+[r_{13},r_{12}]=0
\end{equation}
is called \textbf{classical Yang-Baxter equation (CYBE)}.

An infinitesimal bialgebra $(A,\Delta)$ is called \textbf{coboundary} if there exists an $r\in A\otimes A$ such that
\begin{equation}\label{coboundary infini}
\Delta(x)=({\rm id}\otimes L_\circ(x)- R_\circ(x)\otimes {\rm
id})r,\quad \forall x\in A.
\end{equation}
%\cm{\begin{equation}\label{coboundary infini}
%\Delta(x)=({\rm id}\otimes L_\circ(x)- R_\circ(x)\otimes {\rm
%id})r,\quad \forall x\in A.
%\end{equation}}
In this case, $\Delta$ automatically satisfies Eq. $(\ref{infinitesimal bialg1})$. %It is usually denoted by $(A,\circ,r)$.

Let $(A,\circ)$  be a commutative associative algebra and $r\in
A\otimes A$. The linear map $\Delta$ defined by Eq.
$(\ref{coboundary infini})$ makes $(A,\Delta)$ become an
infinitesimal bialgebra such that $\Delta^*$ defines a commutative
associative algebra structure on $A^*$ if and only if the
following conditions are satisfied:
\begin{align}
&(L_{\circ}(x)\otimes {\rm id}-{\rm id}\otimes L_{\circ}(x))(r+\tau(r))=0,\\
&(L_{\circ}(x)\otimes {\rm id}\otimes {\rm id}-{\rm id}\otimes{\rm id}\otimes L_{\circ}(x))\textup{\textbf{A}}(r)=0
\end{align}
for all $x\in A$, where $\textup{\textbf{A}}(r)=r_{23}\circ r_{12}-r_{13}\circ r_{23}-r_{12}\circ
r_{13}$.

In particular, the following equation:
\begin{equation}\label{eq:AYBE}
\textup{\textbf{A}}(r)=r_{23}\circ r_{12}-r_{13}\circ r_{23}-r_{12}\circ
r_{13}=0
\end{equation}
is called \textbf{associative Yang-Baxter equation (AYBE)}.

\begin{defi}
A Poisson bialgebra $(P,[\;,\;],\circ,\delta,\Delta)$ is called
\textbf{coboundary} if $\delta$ and $\Delta$ satisfy
\begin{eqnarray}
 \delta(x)&=&({\rm ad}(x)\otimes {\rm id}+{\rm id}\otimes{\rm ad}(x))r,\label{eq:PB1}\\
   \Delta(x)&=&({\rm id}\otimes L_{\circ}(x)-L_{\circ}(x)\otimes{\rm id})r\label{eq:PB2}
\end{eqnarray}
for all $x\in P$ and some $r\in P\otimes P$. We sometimes denote
it by $(P,[\;,\;],\circ,r)$.
\end{defi}

%Obviously, for a Poisson bialgebra $(P,[\;,\;],\circ,r)$, being coboundary is equivalent to the fact that both $(P,\circ,r)$ (as an infinitesimal bialgebra) and $(P,[\;,\;],r)$ (as a Lie bialgebra) are coboundary. Further, let $(P,[\;,\;],\circ,r)$ be a coboundary Poisson bialgebra for some $r\in P\otimes P$, then it is easy to show that $\delta$ and $\Delta$ automatically satisfy Eqs. $(\ref{c5.1})$ and $(\ref{c5.2})$.

%The following theorem from \cite{NB1} illustrates the conditions in which a Poisson algebra $(P,[\;,\;],\circ)$ equipped with two linear maps $\Delta,\delta:P\to P\otimes P$ could become a Poisson coalgebra, where $\delta$ and $\Delta$ are defined by Eqs. $(\ref{Lie-Poisson bialg 1})$ and $(\ref{Lie-Poisson bialg 2})$ respectively for some $r\in P\otimes P$.

\begin{thm}\label{cLie-Poissonbialg}
Let $(P,[\;,\;],\circ)$ be a Poisson algebra and $r\in P\otimes
P$. Let $\delta:P\to \wedge^2P$ and $\Delta:P\to P\otimes P$ be
two linear maps defined by Eqs. $(\ref{eq:PB1})$ and
$(\ref{eq:PB2})$ respectively. Then
$(P,[\;,\;],\circ,\delta,\Delta)$ is a Poisson bialgebra if and
only if for all $x\in P$, the following conditions are satisfied:
\begin{itemize}
    \item[\textup{(1)}]
    $({\rm ad}(x)\otimes {\rm id}+{\rm id}\otimes{\rm ad}(x))(r+\tau(r))=0$;
    \item[\textup{(2)}]
    $(L_{\circ}(x)\otimes {\rm id}-{\rm id}\otimes L_{\circ}(x))(r+\tau(r))=0$;
    \item[\textup{(3)}]
    $({\rm ad}(x)\otimes {\rm id}\otimes {\rm id}+{\rm id}\otimes{\rm ad}(x)\otimes {\rm id}+{\rm id}\otimes {\rm id}\otimes {\rm ad}(x))\textup{\textbf{C}}(r)=0$;
    \item[\textup{(4)}]
    $(L_{\circ}(x)\otimes {\rm id}\otimes {\rm id}-{\rm id}\otimes{\rm id}\otimes L_{\circ}(x))\textup{\textbf{A}}(r)=0$;
    \item[\textup{(5)}]
    $({\rm ad}(x)\otimes {\rm id}\otimes {\rm id})\textup{\textbf{A}}(r)-({\rm id}\otimes L_{\circ}(x)\otimes{\rm id}
    -{\rm id}\otimes{\rm id}\otimes L_{\circ}(x))\textup{\textbf{C}}(r)=0$.
\end{itemize}
\end{thm}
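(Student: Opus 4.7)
The plan is to separate the Poisson bialgebra axioms into three disjoint groups and verify each in turn. The first group concerns only $\delta$: that $(P,[\;,\;],\delta)$ be a Lie bialgebra and $\delta^*$ define a Lie algebra on $P^*$. The second group concerns only $\Delta$: that $(P,\circ,\Delta)$ be an infinitesimal bialgebra and $\Delta^*$ define a commutative associative algebra on $P^*$. The third group consists of the three mixed compatibilities, namely the compatibility in the definition of Poisson bialgebra together with Eqs.~(\ref{c5.1}) and (\ref{c5.2}).

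For the first group I would invoke the coboundary Lie bialgebra theorem recalled in Section~\ref{pre}: since $\delta$ has the coboundary form (\ref{eq:PB1}), the $1$-cocycle identity (\ref{coboundary Lie}) is automatic, the skew-symmetry $\delta(x)\in\wedge^2 P$ is exactly condition (1), and the co-Jacobi identity for $\delta^*$ is exactly condition (3). For the second group I would apply the analogous coboundary infinitesimal bialgebra result from the same section: Eq.~(\ref{infinitesimal bialg1}) is automatic, commutativity of $\Delta^*$ corresponds to condition (2), and co-associativity of $\Delta^*$ corresponds to condition (4).

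The heart of the argument is showing that, modulo conditions (1)--(4), the three mixed compatibility equations collapse to condition (5). I would substitute (\ref{eq:PB1}) and (\ref{eq:PB2}) into each of the three compatibilities and expand the resulting identities in $P^{\otimes 3}$, using repeatedly $[L_\circ(x),L_\circ(y)]=0$ (commutativity of $\circ$), $[{\rm ad}(x),{\rm ad}(y)]={\rm ad}([x,y])$ (Jacobi), and the Leibniz rule $[{\rm ad}(x),L_\circ(y)]=L_\circ([x,y])$. The strategy is to regroup the tensor terms into expressions of the form $(\text{operator})(r+\tau(r))$, $(\text{operator})\textup{\textbf{C}}(r)$, or $(\text{operator})\textup{\textbf{A}}(r)$, each of which vanishes under (1)--(4); the residual terms should then coalesce into exactly the identity asserted in condition (5).

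The main obstacle is the bookkeeping of this last step: \emph{a priori}, three independent compatibilities could produce three independent new constraints, so the nontrivial content is showing that they collectively add only condition (5). The cleanest route I see is to process the purely tensorial compatibility from the definition of Poisson bialgebra first, since it carries no external multiplication by $L_\circ$ or ${\rm ad}$, and then check that (\ref{c5.1}) and (\ref{c5.2}) add nothing beyond (1)--(4) and (5). Alternatively one could pass through the single-operation presentation via (\ref{eq:PA-sPA})--(\ref{eq:sPA-PA}), in which the three compatibilities may visibly merge by symmetry; this route is more natural within the adm-Poisson framework developed later in the paper, so here I would carry out the direct expansion.
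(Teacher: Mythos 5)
Your decomposition is sound and the plan would go through. A point of orientation first: the paper does not actually prove Theorem~\ref{cLie-Poissonbialg} --- Section~\ref{pre} imports it from \cite{NB1} without proof --- so the relevant comparisons are the direct computation of that reference (which is what you propose) and the paper's own later re-derivation of the equivalent Corollary~\ref{corollary1} via Propositions~\ref{cbialgequ}, \ref{pro:corre} and Corollary~\ref{pro:coboundary adm-Poisson}. Your grouping is exactly right: (1) and (3) are skew-symmetry and co-Jacobi for the coboundary $\delta$ of Eq.~(\ref{eq:PB1}), with the $1$-cocycle identity (\ref{coboundary Lie}) automatic; (2) and (4) are cocommutativity and coassociativity for the coboundary $\Delta$ of Eq.~(\ref{eq:PB2}), with Eq.~(\ref{infinitesimal bialg1}) automatic (note $R_\circ=L_\circ$ by commutativity, so (\ref{eq:PB2}) is an instance of (\ref{coboundary infini})). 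One remark that deflates the ``main obstacle'' you flag: Eqs.~(\ref{c5.1}) and (\ref{c5.2}) are in fact \emph{identities} for the coboundary forms, requiring none of (1)--(5). For instance, substituting into (\ref{c5.2}), the cross terms ${\rm ad}(x)\otimes L_\circ(y)$ and $L_\circ(y)\otimes{\rm ad}(x)$ cancel between the two summands on the right, leaving $\bigl(-[{\rm ad}(x),L_\circ(y)]\otimes {\rm id}+{\rm id}\otimes[{\rm ad}(x),L_\circ(y)]\bigr)r=\Delta([x,y])$ by the Leibniz rule $[{\rm ad}(x),L_\circ(y)]=L_\circ([x,y])$; Eq.~(\ref{c5.1}) collapses the same way using ${\rm ad}(x\circ y)=L_\circ(x)\,{\rm ad}(y)+L_\circ(y)\,{\rm ad}(x)$. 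Hence only the dual Leibniz compatibility $({\rm id}\otimes\Delta)\delta=(\delta\otimes{\rm id})\Delta+(\tau\otimes{\rm id})({\rm id}\otimes\delta)\Delta$ contributes a new constraint, and it is that single expansion which must be shown to reduce, modulo (1)--(4), to condition (5); three independent constraints never threaten to appear. By contrast, the paper's in-house route is precisely the alternative you set aside: pass to the adm-Poisson presentation, where the data repackage via ${\bf P}(r)=\textup{\textbf{A}}(r)+\textup{\textbf{C}}(r)$ and ${\bf Q}(r)=\textup{\textbf{A}}(r)-\textup{\textbf{C}}(r)$ into one condition on $r+\tau(r)$ and one cubic condition. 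That route buys a single unified computation and explains why (3)--(5) are one statement in disguise; yours buys independence from the adm-Poisson formalism and matches \cite{NB1}. The only caveat is that your write-up defers the one genuinely laborious step --- the reduction of the dual Leibniz rule to (5) --- to ``bookkeeping''; that expansion is real work and must be carried out, but there is no obstruction in it.
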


\begin{defi}
Let $(P,[\;,\;],\circ)$ be a Poisson algebra and $r\in P\otimes
P$. $r$ is a solution of {\bf Poisson Yang-Baxter equation (PYBE)}
in $(P,[\;,\;],\circ)$  if $r$ is a solution of both CYBE and
AYBE, that is, $\textup{\textbf{C}}(r)=\textup{\textbf{A}}(r)=0$.

\end{defi}

\begin{cor}
Let $(P,[\;,\;],\circ)$ be a Poisson algebra and $r\in P\otimes
P$. Let $\delta:P\to \wedge^2P$ and $\Delta:P\to P\otimes P$ be
two linear maps defined by Eqs. $(\ref{eq:PB1})$ and
$(\ref{eq:PB2})$ respectively. If $r$ is a skew-symmetric solution
of PYBE %Poisson Yang-Baxter equation
 in $(P,[\;,\;],\circ)$, then
$(P,[\;,\;],\circ,\delta,\Delta)$ is a Poisson bialgebra.
\end{cor}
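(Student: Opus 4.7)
The plan is to derive this corollary directly from Theorem \ref{cLie-Poissonbialg} by checking that each of the five conditions (1)--(5) listed there is automatically satisfied under the hypotheses. The work splits into two independent observations: the skew-symmetry of $r$, and the vanishing of the two ``master'' tensors $\textup{\textbf{C}}(r)$ and $\textup{\textbf{A}}(r)$ coming from PYBE.

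First I would handle conditions (1) and (2). Since $r$ is skew-symmetric, $r+\tau(r)=0$, so both $({\rm ad}(x)\otimes {\rm id}+{\rm id}\otimes {\rm ad}(x))(r+\tau(r))$ and $(L_\circ(x)\otimes {\rm id}-{\rm id}\otimes L_\circ(x))(r+\tau(r))$ vanish identically for every $x\in P$, regardless of the algebra structures involved. This disposes of the two ``coboundary compatibility'' conditions on the symmetric part of $r$.

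Next I would handle conditions (3) and (4). By hypothesis $r$ solves PYBE, so $\textup{\textbf{C}}(r)=0$ and $\textup{\textbf{A}}(r)=0$. Since the operators $({\rm ad}(x)\otimes{\rm id}\otimes{\rm id}+{\rm id}\otimes{\rm ad}(x)\otimes{\rm id}+{\rm id}\otimes{\rm id}\otimes{\rm ad}(x))$ in (3) and $(L_\circ(x)\otimes {\rm id}\otimes {\rm id}-{\rm id}\otimes{\rm id}\otimes L_\circ(x))$ in (4) act on $\textup{\textbf{C}}(r)$ and $\textup{\textbf{A}}(r)$ respectively, both expressions vanish. Finally, condition (5) involves only $\textup{\textbf{A}}(r)$ and $\textup{\textbf{C}}(r)$ under the action of various linear operators, hence it too vanishes. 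Applying Theorem \ref{cLie-Poissonbialg} then yields the Poisson bialgebra structure.

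There is essentially no obstacle here: once Theorem \ref{cLie-Poissonbialg} is in hand, the corollary is an immediate verification. The only subtlety worth a one-line remark is that skew-symmetry is what simultaneously kills the two ``symmetric-part'' compatibility conditions (1) and (2); in the non-skew-symmetric case, one would have to impose them separately as additional constraints, which is precisely why the skew-symmetric case is singled out as the clean source of Poisson bialgebras from PYBE.
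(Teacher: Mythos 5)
Your proposal is correct and is exactly the intended derivation: the paper states this corollary (recalled from \cite{NB1}) without proof precisely because it is an immediate consequence of Theorem \ref{cLie-Poissonbialg} — skew-symmetry of $r$ kills conditions (1) and (2) via $r+\tau(r)=0$, and $\textup{\textbf{C}}(r)=\textup{\textbf{A}}(r)=0$ from PYBE kills conditions (3), (4) and (5). Nothing further is needed.
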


\section{Representations  and matched pairs of adm-Poisson algebras \label{sp}}
In this section, we introduce the notions of representations and
matched pairs of adm-Poisson algebras and then give some
properties.

\emptycomment{Let $(P,\c)$ be an adm-Poisson algebra. For
convenience,  we denote the left and right multiplication
operators by $L(x)$ and $R(x)$ respectively without confusion,
 that is, $L(x)y=R(y)x=x\c y$ for all $x,y\in P$. Their relation with the corresponding operators of the corresponding Poisson algebra
 $(P,[\;,\;],\circ)$ is \liu{The relations between the operators $L$ and $R$ in adm-Poisson algebra and the operators $L_\circ$ and ${\rm ad}$ in the corresponding Poisson algebra are given by}
\begin{equation}L=L_\circ+{\rm ad},\quad R=L_\circ-{\rm ad}.\end{equation}
Let $L,R:P\rightarrow {\rm End}(P)$ be two linear maps with
$a\rightarrow L(a)$ and $a\rightarrow R(a)$ respectively.}

\begin{defi}\label{definition module}
Let $(P,\star)$ be an adm-Poisson algebra and $V$ be a vector
space. Let ${\frkl},\frkr:P\to {\rm End}_{\mathbb F}(V)$ be two linear maps.
The triple $(\frkl,\frkr,V)$ is called a \textup{\textbf{representation}}
of $(P,\star )$ if
\begin{eqnarray}
\frkl(x\star y)=\frkl(x)\frkl(y)-\frac{1}{3}\big(-\frkl(x)\frkl(y)+\frkr(x\c y)+\frkl(y)\frkl(x)-\frkl(y)\frkl(x)\big),\label{c2} \\
\frkr(y)\frkl(x)=\frkl(x)\frkr(y)-\frac{1}{3}\big(-\frkl(x)\frkl(y)+\frkl(y)\frkl(x)+\frkr(x\star y)-\frkr(y\star x)\big),\label{c3}\\
\frkr(y)\frkr(x)=\frkr(x\star y)-\frac{1}{3}\big(-\frkr(y\star x)+\frkl(y)\frkl(x)+\frkl(x)\frkr(y)-\frkl(x)\frkl(y)\big)\label{c4}
\end{eqnarray}
for all $x,y\in P$. Two representations $(\frkl_1,\frkr_1,V_1)$ and
$(\frkl_2,\frkr_2,V_2)$ of an adm-Poisson algebra $P$ are called {\bf
equivalent} if there exists an isomorphism $\varphi:V_1\rightarrow
V_2$ satisfying
\begin{equation}\label{lem:rep1}
\varphi \frkl_1(x) =\frkl_2(x)\varphi,\quad \varphi
\frkr_1(x)=\frkr_2(x)\varphi,\;\;\forall x\in P.
\end{equation}
\end{defi}

\begin{lem}\label{lem:rep-property1}
  Let $(\frkl,\frkr,V)$ be a representation of an adm-Poisson algebra $(P,\star)$.
  Then the following equation holds:
  \begin{equation}\label{eq:rep-property1}
    \frkl(x\star y)+\frkr(x) \frkr(y)=\frkl(x)\frkl(y)+\frkr(y\star x),\;\;\forall x,y\in
    P.
  \end{equation}
\end{lem}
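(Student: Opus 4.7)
The plan is to derive the identity by combining just two of the three defining representation identities, namely equation \eqref{c2} and equation \eqref{c4}. The key observation is that once rearranged, both sides of the desired equation admit the same description in terms of the "error" between left/right products and the actions of $\star$-products.

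First, I would rewrite \eqref{c2} in the equivalent form
\[
\frkl(x\star y)-\frkl(x)\frkl(y)=-\frac{1}{3}\bigl(-\frkl(x)\frkr(y)+\frkr(x\star y)+\frkl(y)\frkl(x)-\frkl(y)\frkr(x)\bigr),
\]
so that the desired left-hand side $\frkl(x\star y)-\frkl(x)\frkl(y)$ is expressed as an explicit combination of compositions of $\frkl$'s and $\frkr$'s. Next, I would apply identity \eqref{c4} with the roles of $x$ and $y$ interchanged and rearrange it to obtain
\[
\frkr(y\star x)-\frkr(x)\frkr(y)=\frac{1}{3}\bigl(-\frkr(x\star y)+\frkl(x)\frkr(y)+\frkl(y)\frkr(x)-\frkl(y)\frkl(x)\bigr),
\]
which similarly expresses the quantity $\frkr(y\star x)-\frkr(x)\frkr(y)$ as an explicit combination of the same operator compositions.

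A term-by-term comparison of these two right-hand sides then shows that they coincide: both are equal to $\tfrac{1}{3}\bigl(\frkl(x)\frkr(y)-\frkr(x\star y)-\frkl(y)\frkl(x)+\frkl(y)\frkr(x)\bigr)$. Equating the corresponding left-hand sides yields
\[
\frkl(x\star y)-\frkl(x)\frkl(y)=\frkr(y\star x)-\frkr(x)\frkr(y),
\]
which is exactly the identity \eqref{eq:rep-property1} after moving terms to the appropriate sides.

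The proof is essentially a bookkeeping exercise rather than a deep argument, so there is no serious obstacle; the only care needed is in correctly swapping the variables $x\leftrightarrow y$ in \eqref{c4} and tracking the signs in the right-hand side, since the expressions differ only by an overall sign. Equation \eqref{c3} plays no role in the derivation, which is consistent with the lemma saying nothing about mixed compositions of the form $\frkr(y)\frkl(x)$.
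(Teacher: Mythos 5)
Your proposal is correct and is essentially identical to the paper's own proof: both rearrange Eq.~\eqref{c2} and Eq.~\eqref{c4} (the latter with $x$ and $y$ interchanged) and observe that $\frkl(x)\frkl(y)-\frkl(x\star y)$ and $\frkr(x)\frkr(y)-\frkr(y\star x)$ equal the same operator expression $\frac{1}{3}\big(-\frkl(x)\frkr(y)+\frkr(x\star y)+\frkl(y)\frkl(x)-\frkl(y)\frkr(x)\big)$. Note only that you are (correctly) reading \eqref{c2} and \eqref{c4} in their intended form with the appropriate $\frkr$'s, as the paper's own proof does, rather than as the misprinted displays in Definition~\ref{definition module} literally read.
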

\begin{proof}
  On the one hand, by Eq. \eqref{c2}, we have
  $$\frac{1}{3}\big(-\frkl(x)\frkr(y)+\frkr(x\c y)+\frkl(y)\frkl(x)-\frkl(y)\frkr(x)\big)=\frkl(x)\frkl(y)-\frkl(x\star y),\;\;\forall x,y\in P.$$
 On the other hand, by Eq. \eqref{c4}, we have
  $$\frac{1}{3}\big(-\frkl(x)\frkr(y)+\frkr(x\c y)+\frkl(y)\frkl(x)-\frkl(y)\frkr(x)\big)=\frkr(x)\frkr(y)-\frkr(y\star x),\;\;\forall x,y\in P.$$
Hence Eq. \eqref{eq:rep-property1} holds.
\end{proof}

\begin{pro}\label{pro:semi}
Let $(P,\star)$ be an adm-Poisson algebra. Let $V$ be a vector
space and $\frkl,\frkr:P\to {\rm End}_{\mathbb
F}(V)$ be two linear maps. Define a
bilinear operation $\star_{\frkl,\frkr}:(P\oplus V)\otimes (P\oplus V)\to
P\oplus V$ on $P\oplus V$ by
    $$(x+u)\star_{\frkl,\frkr}(y+v)=x\star y+\frkl(x)v+\frkr(y)u, \quad\forall x,y\in P,u,v\in V.$$
Then $(\frkl,\frkr,V)$ is a representation of $(P,\star)$ if and only if
$(P\oplus V,\star_{\frkl,\frkr})$ is an adm-Poisson algebra, which is
called the {\bf semi-direct product} of $P$ by $V$ and denoted by
$P\ltimes_{\frkl,\frkr} V$ or simply $P\ltimes V$.
\end{pro}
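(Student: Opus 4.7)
The plan is to expand the defining adm-Poisson identity \eqref{c1} for the bilinear operation $\star_{\frkl,\frkr}$ applied to three generic elements $X=x+u$, $Y=y+v$, $Z=z+w$ of $P\oplus V$, and then show that the resulting identity decomposes into the $P$-part (which is automatic) plus three independent $V$-valued identities that coincide with \eqref{c2}, \eqref{c3}, \eqref{c4}.

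First I would compute, for each of the six triple products appearing in \eqref{c1}, the $P$-component and $V$-component under $\star_{\frkl,\frkr}$. For instance, $(X\star_{\frkl,\frkr} Y)\star_{\frkl,\frkr} Z$ has $P$-component $(x\star y)\star z$ and $V$-component $\frkl(x\star y)w+\frkr(z)\frkl(x)v+\frkr(z)\frkr(y)u$; analogous expansions handle the other five products. Collecting all six terms with the coefficients $1,-1,\tfrac13,-\tfrac13,-\tfrac13,\tfrac13$ prescribed by \eqref{c1}, the $P$-component of the result is exactly \eqref{c1} for $(P,\star)$ and therefore vanishes since $(P,\star)$ is by hypothesis an adm-Poisson algebra. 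Consequently the adm-Poisson identity for $(P\oplus V,\star_{\frkl,\frkr})$ is equivalent to a single $V$-valued identity that is linear in each of the three arguments $u,v,w$.

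Next, by multilinearity in $u,v,w$, this $V$-valued identity holds for all $u,v,w\in V$ and $x,y,z\in P$ if and only if each of its three linear parts vanishes separately. Setting $v=w=0$ isolates the coefficient of $u$ and, after simplification, produces precisely \eqref{c4} (the $V$-slot in the leftmost position); setting $u=w=0$ isolates the coefficient of $v$ and produces \eqref{c3} (the $V$-slot in the middle); setting $u=v=0$ isolates the coefficient of $w$ and produces \eqref{c2} (the $V$-slot in the rightmost position). This simultaneously establishes both directions: if $(\frkl,\frkr,V)$ is a representation, all three pieces vanish and hence $(P\oplus V,\star_{\frkl,\frkr})$ is adm-Poisson; conversely, the adm-Poisson identity on $P\oplus V$ specialized at $u,v,w$ each taken alone forces \eqref{c2}, \eqref{c3}, \eqref{c4} in turn.

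The only real obstacle is bookkeeping: each of the six terms in \eqref{c1} contributes three $V$-valued summands (one per position of the $V$-entry), giving eighteen summands to be sorted into three groups of six. I would organize this in a short table keyed by the position of the $V$-entry, which makes the matching with \eqref{c2}--\eqref{c4} essentially automatic and keeps the argument free of any conceptual difficulty.
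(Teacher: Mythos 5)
Your proposal is correct and is exactly the direct verification that the paper itself invokes (its proof of Proposition \ref{pro:semi} is the one-line remark that the check is straightforward, citing \cite{Sch}): expanding \eqref{c1} on $P\oplus V$, the $P$-component reproduces \eqref{c1} for $(P,\star)$, and isolating the coefficients of $w$, $v$, $u$ yields \eqref{c2}, \eqref{c3}, \eqref{c4} respectively, which gives both implications at once. The only caveat is cosmetic: your computation produces the evidently intended forms of \eqref{c2} and \eqref{c4} (with $\frkr$ in the mixed terms, as used in the proof of Lemma \ref{lem:rep-property1}), whereas the displayed versions of those equations contain typographical slips, so your bookkeeping should be matched against the corrected identities rather than the printed ones.
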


\begin{proof}
%for all $a,b,c\in P,u,v,w\in V$, set
%\begin{align*}
%&\,M(a+u,b+v,c+w)\\
%=\;&-\left((a+u)\c(b+v)\right)\c(c+w)+(a+u)\c\left((b+v)\c(c+w)\right)\\
%&-\frac{1}{3}\big[-(a+u)\c((c+w)\c(b+v))+(c+w)\c((a+u)\c(b+v))\\
%&+(b+v)\c((a+u)\c(c+w))-(b+v)\c((c+w)\c(a+u))\big].
%\end{align*}
%By a direct calculation, we show that $M(a+u,b+v,c+w)=A+Bu+Cv+Dw$,
%where \setlength{\abovedisplayskip}{0pt}
%\begin{align*}
%A=&-(a\c_P b)\c_P c+a\c_P(b\c_P c)+\frac{1}{3}\big[a\c_P(c\c_P b)-c\c_P(a\c_P b)\\
%  &-b\c_P(a\c_P c)+b\c_P(c\c_P a)\big],\\
%B=&-r(c)r(b)+r(b\c_P c)+\frac{1}{3}\big[r(c\c_Pb)-l(c)r(b)-l(b)r(c)+l(b)l(c)\big],\\
%C=&-r(c)l(a)+l(a)r(c)+\frac{1}{3}\big[l(a)l(c)-l(c)l(a)-r(a\c_Pc)+r(c\c_Pa)\big],\\
%D=&-l(a\c_Pb)+l(a)l(b)+\frac{1}{3}\big[l(a)r(b)-r(a\c_Pb)-l(b)l(a)+l(b)r(a)\big],
%\end{align*}
%for all $a,b,c\in P,\ u,v,w\in V$.

%According to the definition of an adm-Poisson algebra, we get
%$A=0$.  Moreover, Eqs. $(\ref{c2})-(\ref{c4})$ hold if and only if
% $B=0,C=0,D=0$.
%Therefore, $(l,r,V)$ is a representation of $(P,\star)$ if and
%only if $M(a+u,b+v,c+w)=0$, that is, $(P\oplus V,\star)$ is an
%adm-Poisson algebra.
It is due to \cite{Sch} with a straightforward proof.
\end{proof}

\begin{rmk}\label{rmk:repre} Recall that a representation of a Poisson algebra
$(P,[\;,\;],\circ)$ is a triple  $(S_{[\;,\;]},S_\circ,V)$ such
that $(S_{[\;,\;]},V)$ is a representation of the Lie algebra
$(P,[\;,\;])$ and $(S_\circ,V)$ is a representation of the
commutative associative algebra $(P,\circ)$ satisfying some
compatible conditions (\cite{NB1}). Let $(\frkl,\frkr,V)$ be a
representation of an adm-Poisson algebra $(P,\star_P)$. Then
$\left(\frac{1}{2}(\frkl-\frkr),\frac{1}{2}(\frkl+\frkr),V\right)$ is a
representation of the corresponding Poisson algebra
$(P,[\;,\;],\circ)$. Conversely, if $(S_{[\;,\;]},S_\circ,V)$ is a
representation of a Poisson algebra $(P,[\;,\;],\circ)$, then
$(S_{[\;,\;]}+S_\circ,S_\circ-S_{[\;,\;]},V)$ is a representation
of the corresponding adm-Poisson algebra $(P,\star_P)$. Thus the
representations of an adm-Poisson algebra are in a one-to-one
correspondence with the representations of the corresponding
Poisson algebra.
\end{rmk}

\begin{pro}
Let $(\frkl,\frkr,V)$ be a representation of an adm-Poisson algebra
$(P,\star)$. Then  $(-\frkr^*$, $-\frkl^*$, $V^*)$ is a representation of
$(P,\star)$. We call it the {\bf dual representation} of
$(\frkl,\frkr,V)$.
\end{pro}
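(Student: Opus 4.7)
The plan is to verify each of the three defining equations \eqref{c2}, \eqref{c3}, \eqref{c4} directly for the candidate dual $(\tilde\frkl,\tilde\frkr):=(-\frkr^*,-\frkl^*)$ acting on $V^*$. The main algebraic tool is the contravariant identity $(AB)^*=B^*A^*$ for composition of endomorphisms, together with the sign convention $\langle\rho^*(x)v^*,u\rangle=-\langle v^*,\rho(x)u\rangle$ adopted in the introduction. Under this convention one has, for example,
\[
\tilde\frkl(x)\tilde\frkl(y)=\frkr^*(x)\frkr^*(y)=(\frkr(y)\frkr(x))^*,\qquad
\tilde\frkl(x)\tilde\frkr(y)=(\frkl(y)\frkr(x))^*,
\]
and similarly for $\tilde\frkr\tilde\frkl$ and $\tilde\frkr\tilde\frkr$, so that each candidate identity on $V^*$ is the dual of an identity on $V$ in which the order of the two operator factors has been reversed and the symbols $\frkl,\frkr$ have been swapped.

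Carrying out the substitution term by term, I expect that \eqref{c2} for $(\tilde\frkl,\tilde\frkr)$ at the pair $(x,y)$ dualizes, after multiplication by $-1$ and the application of $(\cdot)^*$, precisely to \eqref{c4} for $(\frkl,\frkr)$ evaluated at $(y,x)$; symmetrically, \eqref{c4} for $(\tilde\frkl,\tilde\frkr)$ dualizes to \eqref{c2} for $(\frkl,\frkr)$ with $x,y$ swapped; and \eqref{c3} for $(\tilde\frkl,\tilde\frkr)$ dualizes to \eqref{c3} for $(\frkl,\frkr)$ with $x,y$ swapped. Since $(\frkl,\frkr,V)$ satisfies \eqref{c2}--\eqref{c4} for all choices of arguments, all three axioms hold for $(\tilde\frkl,\tilde\frkr,V^*)$.

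As an alternative, one can bypass the hand-verification in two different ways. First, by Remark~\ref{rmk:repre} the representation $(\frkl,\frkr,V)$ corresponds to a representation $\bigl(\tfrac12(\frkl-\frkr),\tfrac12(\frkl+\frkr),V\bigr)$ of the associated Poisson algebra $(P,[\,,\,],\circ)$; dualizing this Poisson representation (which is standard: coadjoint for the Lie part and the dual module for the commutative associative part) and translating back via $(S_{[\,,\,]},S_\circ)\mapsto(S_{[\,,\,]}+S_\circ,S_\circ-S_{[\,,\,]})$ recovers $(-\frkr^*,-\frkl^*)$. Second, one can invoke Proposition~\ref{pro:semi} and verify directly that $P\ltimes_{-\frkr^*,-\frkl^*}V^*$ satisfies the single axiom \eqref{c1}, which again reduces via the pairing with $V$ to axiom \eqref{c1} for $(P,\star)$.

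The main obstacle is purely combinatorial: the three axioms \eqref{c2}--\eqref{c4} contain several terms each, and the bookkeeping of sign flips and order reversals produced by $(AB)^*=B^*A^*$ must be matched carefully with the correct permutation of $\frkl,\frkr$ and of $x,y$ on the other side. No conceptual input beyond the duality identity and the sign convention for $\rho^*$ is required.
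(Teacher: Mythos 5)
Your overall strategy (check \eqref{c2}--\eqref{c4} for $(-\frkr^*,-\frkl^*,V^*)$ by dualizing, using $\langle\rho^*(x)v^*,u\rangle=-\langle v^*,\rho(x)u\rangle$ and the reversal of composition under transpose) is exactly the paper's, but your predicted term-by-term matching is false, and this is a genuine gap rather than mere bookkeeping. Carrying out the substitution, axiom \eqref{c2} for $(-\frkr^*,-\frkl^*)$ at $(x,y)$ transposes to
\begin{equation*}
\frkr(x\star y)-\frkr(y)\frkr(x)+\tfrac{1}{3}\bigl(-\frkl(y)\frkr(x)+\frkl(x\star y)+\frkr(x)\frkr(y)-\frkl(x)\frkr(y)\bigr)=0 ,
\end{equation*}
which contains $\frkl(x\star y)$ inside the $\tfrac13$-bracket. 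No instance of \eqref{c4} (at $(x,y)$ or $(y,x)$) contains $\frkl$ of a product, so this is \emph{not} the transpose of \eqref{c4} with arguments swapped; the two differ by $\tfrac13\bigl(\frkl(x\star y)+\frkr(x)\frkr(y)-\frkl(x)\frkl(y)-\frkr(y\star x)\bigr)$. The same discrepancy appears for the dual versions of \eqref{c3} and \eqref{c4}. Closing it requires precisely the derived identity $\frkl(x\star y)+\frkr(x)\frkr(y)=\frkl(x)\frkl(y)+\frkr(y\star x)$, i.e.\ Eq.~\eqref{eq:rep-property1} of Lemma~\ref{lem:rep-property1}, which is obtained by combining \eqref{c2} with \eqref{c4} and is the one non-mechanical ingredient in the paper's proof. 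Without isolating this lemma, the sign-and-order bookkeeping you describe simply does not terminate: the axiom set is not permuted by dualization, it is only preserved modulo this extra relation.

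Your first fallback — passing through Remark~\ref{rmk:repre} to the polarized Poisson representation $\bigl(\tfrac12(\frkl-\frkr),\tfrac12(\frkl+\frkr),V\bigr)$, dualizing there, and translating back — is valid and is exactly the argument the paper records in the remark immediately following the proposition (though it presupposes the standard fact that $(S_{[\;,\;]}^*,-S_\circ^*,V^*)$ is the dual of a Poisson representation). Your second fallback via Proposition~\ref{pro:semi} is not really independent: verifying \eqref{c1} on $P\ltimes_{-\frkr^*,-\frkl^*}V^*$ for triples with one factor in $V^*$ is literally the same computation as checking \eqref{c2}--\eqref{c4}, so it inherits the same missing step.
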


\delete{
\begin{proof}
It follows from a direct calculation. Here we only give a proof of
the first condition Eq. $(\ref{c2})$ in Definition \ref{definition
module} for $(-\frkr^*,-\frkl^*,V^*)$ as an example. The proofs for the
other two conditions are similar. To this end, one only need to
check that for all $a,b\in P$, the following equation holds:
$$\frkr^*(a\c b)+\frkr^*(a)\frkr^*(b)+\frac{1}{3}(\frkr^*(a)\frkl^*(b)+\frkl^*(a\c b)-\frkr^*(b)\frkr^*(a)+\frkr^*(b)\frkl^*(a))=0.$$
Note that the properties of representations of an adm-Poisson
algebra give
$$\frkl(a\c b)+\frkr(a)\frkr(b)=\frkl(a)\frkl(b)+\frkr(b\c a),\quad \forall a,b\in P.$$
Then for arbitrary $u\in V,\ v^*\in V^*$, we have
\begin{align*}
    &\langle[\frkr^*(a\c b)+\frkr^*(a)\frkr^*(b)+\frac{1}{3}(\frkr^*(a)\frkl^*(b)+\frkl^*(a\c b)-\frkr^*(b)\frkr^*(a)+\frkr^*(b)\frkl^*(a))]v^*,u\rangle\\
    =&\langle v^*,[-\frkr(a\c b)+\frkr(b)\frkr(a)+\frac{1}{3}(\frkl(b)\frkr(a)+\frkl(a)\frkr(b)-\frkl(a\c b)-\frkr(a)\frkr(b)]u\rangle\\
    =&\langle v^*,[-\frkr(a\c b)+\frkr(b)\frkr(a)+\frac{1}{3}(\frkl(b)\frkr(a)+\frkl(a)\frkr(b)-\frkr(b\c a)-\frkl(a)\frkl(b)]u\rangle\\
    =&0,
\end{align*}
hence the conclusion follows.
\end{proof}}

\begin{proof}
  We only show that Eq. $(\ref{c2})$ holds for $(-\frkr^*,-\frkl^*,V^*)$ as an example. The proofs for the holding of Eqs. $(\ref{c3})$ and $(\ref{c4})$ are similar. Since $(\frkl,\frkr,V)$ is a representation of the adm-Poisson algebra
$(P,\star)$, by Lemma \ref{lem:rep-property1}, for all $x,y\in
P,u\in V,\ v^*\in V^*$, we have
\begin{eqnarray*}
  &&\langle\big(\frkr^*(x\c y)+\frkr^*(x)\frkr^*(y)+\frac{1}{3}\big(\frkr^*(x)\frkl^*(y)+\frkl^*(x\c y)-\frkr^*(y)\frkr^*(x)+\frkr^*(y)l^*(x)\big)\big)v^*,u\rangle\\
  &=&\langle v^*,\big(-\frkr(x\c y)+\frkr(y)\frkr(x)+\frac{1}{3}\big(\frkl(y)\frkr(x)+\frkl(x)\frkr(y)-\frkl(x\c y)-\frkr(x)\frkr(y)\big)\big)u\rangle\\
  &=&\langle v^*,\big(-\frkr(x\c y)+\frkr(y)\frkr(x)+\frac{1}{3}\big(\frkl(y)\frkr(x)+\frkl(x)\frkr(y)-\frkr(y\c x)-\frkl(x)\frkl(y)\big)\big)u\rangle=0,
\end{eqnarray*}
which implies that Eq. $(\ref{c2})$ holds for $(-\frkr^*,-\frkl^*,V^*)$.
\end{proof}

\begin{rmk}
In fact, the above conclusion can be obtained from
Remark~\ref{rmk:repre} as follows. Note that for a representation
$(S_{[\;,\;]},S_\circ,V)$ of a Poisson algebra
$(P,[\;,\;],\circ)$, $(S_{[\;,\;]}^*,-S_\circ^*,V)$ is a
representation of $(P,[\;,\;],\circ)$ which is the dual
representation of $(S_{[\;,\;]},S_\circ,V)$. Therefore for a
representation $(\frkl,\frkr,V)$ of an adm-Poisson algebra $(P,\star)$,
$\left(\frac{1}{2}(\frkl-\frkr),\frac{1}{2}(\frkl+\frkr),V\right)$ is a
representation of the corresponding Poisson algebra
$(P,[\;,\;],\circ)$ whose dual representation is
$\left(\frac{1}{2}(\frkl^*-\frkr^*),-\frac{1}{2}(\frkl^*+\frkr^*),V^*\right)$,
which in turn gives a representation $(-\frkr^*,-\frkl^*,V^*)$ of the
original adm-Poisson algebra $(P,\star)$.
\end{rmk}

\begin{rmk}\label{rmk:same}
Note that for a representation $(\frkl,\frkr,V)$ of an associative algebra
in the sense of bimodules, the dual representation is also
$(-\frkr^*,-\frkl^*,V^*)$. Therefore for both associative and adm-Poisson
algebras, the dual representations in the above sense have the
same form.
\end{rmk}

\begin{ex} Let $(P,\star)$ be an adm-Poisson algebra. Let
$L(x)$ and $R(x)$ denote the left and right multiplication
operators, respectively, that is, $L(x)y=R(y)x =x\star y$ for all
$x,y\in P$. Let $L,R:P\rightarrow \End_{\mathbb F}(P)$ be two
linear maps with $x\rightarrow L(x)$ and $ x\rightarrow R(x)$
respectively. Then $(L,R,P)$ is a representation of $(P,\star)$,
called the {\bf adjoint representation}. Furthermore,
$(-R^*,-L^*,P^*)$ is also a representation of $(P,\star)$.
\end{ex}

The relationship between the adjoint representation $(L,R,P)$ of
an adm-Poisson algebra $(P,\star)$ and the representation $({\rm
ad},L_\circ,P)$ of the corresponding Poisson algebra
$(P,[\;,\;],\circ)$ is given by
\begin{equation}L=L_\circ+{\rm ad},\quad R=L_\circ-{\rm ad}.\end{equation}

Letting $(\frkl,\frkr,V)=(L,R, P)$ in Lemma
\ref{lem:rep-property1}, we have the following known conclusion.
\begin{cor}{\rm(\cite{GR06})}
  Let $(P,\star)$ be an adm-Poisson algebra. Then we have
  \begin{equation}\label{eq:adm-Poisson property1}
   (x\star y) \star z -x\star (y\star z)=z\star(y\star x)-(z\star y )\star
   x,\;\;\forall x,y,z\in P.
  \end{equation}
\end{cor}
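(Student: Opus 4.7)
The plan is to deduce this identity directly from Lemma \ref{lem:rep-property1} by specializing it to the adjoint representation $(\frkl,\frkr,V)=(L,R,P)$, and then evaluating both sides at an arbitrary element $z\in P$. The only ingredients needed are the definitions $L(x)y = x\star y$ and $R(y)x = x\star y$, which let us translate compositions of left/right multiplication operators back into iterated $\star$-products.

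First, I would recall that $(L,R,P)$ is indeed a representation of $(P,\star)$ (this is the adjoint representation, already noted in the example preceding the corollary). Thus Lemma \ref{lem:rep-property1} applies and yields the operator identity
\begin{equation*}
L(x\star y) + R(x) R(y) = L(x) L(y) + R(y\star x), \quad \forall x,y\in P.
\end{equation*}

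Next I would apply both sides to an arbitrary $z\in P$ and rewrite each term: $L(x\star y)z = (x\star y)\star z$, $R(x)R(y)z = R(x)(z\star y) = (z\star y)\star x$, $L(x)L(y)z = L(x)(y\star z) = x\star(y\star z)$, and $R(y\star x)z = z\star(y\star x)$. Substituting gives
\begin{equation*}
(x\star y)\star z + (z\star y)\star x = x\star(y\star z) + z\star(y\star x),
\end{equation*}
and a single rearrangement produces Eq.~\eqref{eq:adm-Poisson property1}. There is no real obstacle here; the entire content is packaged inside Lemma \ref{lem:rep-property1}, and the only thing to be careful about is the order of arguments in $R$ (since $R(x)y = y\star x$, not $x\star y$), which is exactly what produces the asymmetric $(z\star y)\star x$ and $z\star(y\star x)$ on the two sides.
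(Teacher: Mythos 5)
Your proof is correct and is exactly the paper's argument: the paper derives this corollary by "Letting $(\frkl,\frkr,V)=(L,R,P)$ in Lemma \ref{lem:rep-property1}", which is precisely your specialization to the adjoint representation followed by evaluation at $z$. The operator-to-product translations, including the order of arguments in $R$, are all handled correctly.
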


\begin{defi}\label{Defmatch}
Let $(P_1,\star_1)$ and $(P_2,\star_2)$ be two adm-Poisson
algebras. Let  $\frkl_1,\frkr_1:P_1\to {\rm End}_{\mathbb F}(P_2)$,
$\frkl_2,\frkr_2:P_2\to {\rm End}_{\mathbb F}(P_1)$ be four linear
maps. $(P_1,P_2,\frkl_1,\frkr_1,$ $\frkl_2,\frkr_2)$ is called
\textbf{\textup{a matched pair of adm-Poisson algebras}} if
 $(\frkl_1,\frkr_1,P_2)$ and $(\frkl_2,\frkr_2,P_1)$ are
representations of $(P_1,\star_1)$ and $(P_2,\star_2)$
respectively, and for all $x,y\in P_1,a,b\in P_2$, the following
equations hold.
\begin{eqnarray}
\frkr_2(a)(x\c_1y)&=&\frkr_2(\frkl_1(y)a)x+x\c_1(\frkr_2(a)y)+\frac{1}{3}\big(\frkr_2(\frkr_1(y)a)x+x\c_1(\frkl_2(a)y) \nonumber \\
&&-\frkl_2(a)(x\c_1y)-y\c_1(\frkr_2(a)x)-\frkr_2(\frkl_1(x)a)y+y\c_1(\frkl_2(a)x)\nonumber \\
&&+\frkr_2(\frkr_1(x)a)y\big),\label{Defmatch1}\\
\frkl_2(a)(x\c_1y)&=&(\frkl_2(a)x)\c_1y+\frkl_2(\frkr_1(x)a)y+\frac{1}{3}\big(-\frkl_2(a)(y\c_1x)+y\c_1(\frkl_2(a)x) \nonumber \\
&&+\frkr_2(\frkr_1(x)a)y+x\c_1(\frkl_2(a)y)+\frkr_2(\frkr_1(y)a)x-x\c_1(\frkr_2(a)y)\nonumber \\
&&-\frkr_2(\frkl_1(y)a)x\big),\label{Defmatch2}\\
(\frkr_2(a)x)\c_1y&=&-\frkl_2(\frkl_1(x)a)y+x\c_1(\frkl_2(a)y)+\frkr_2(\frkr_1(y)a)x+\frac{1}{3}\big(x\c_1(\frkr_2(a)y)\nonumber \\
&&+\frkr_2(\frkl_1(y)a)x-y\c_1(\frkr_2(a)x)-\frkr_2(\frkl_1(x)a)y-\frkl_2(a)(x\c_1y)\nonumber \\
&&+\frkl_2(a)(y\c_1x)\big),\label{Defmatch3}\\
\frkr_1(x)(a\c_2b)&=&\frkr_1(\frkl_y2(b)x)a+a\c_2(\frkr_1(x)b)+\frac{1}{3}\big(\frkr_1(\frkr_2(b)x)a+a\c_2(\frkl_1(x)b) \nonumber \\
&&-\frkl_1(x)(a\c_2b)-b\c_2(\frkr_1(x)a)-\frkr_1(\frkl_2(a)x)b+b\c_2(\frkl_1(x)a)\nonumber \\
&&+\frkr_1(\frkr_2(a)x)b\big),\label{Defmatch4}\\
\frkl_1(x)(a\c_2b)&=&(\frkl_1(x)a)\c_2b+\frkl_1(\frkr_2(a)x)b+\frac{1}{3}\big(-\frkl_1(x)(b\c_2a)+b\c_2(\frkl_1(x)a) \nonumber \\
&&+\frkr_1(\frkr_2(a)x)b+a\c_2(\frkl_1(x)b)+\frkr_1(r_2(b)x)a-a\c_2(\frkr_1(x)b)\nonumber \\
&&-\frkr_1(\frkl_2(b)x)a\big),\label{Defmatch5}\\
(\frkr_1(x)a)\c_2b&=&-\frkl_1(\frkl_2(a)x)b +a\c_2(\frkl_1(x)b)+\frkr_1(\frkr_2(b)x)a+\frac{1}{3}\big(a\c_2(\frkr_1(x)b)\nonumber \\
&&+\frkr_1(\frkl_2(b)x)a-b\c_2(\frkr_1(x)a)-\frkr_1(\frkl_2(a)x)b-\frkl_1(x)(a\c_2b)\nonumber \\
&&+\frkl_1(x)(b\c_2a)\big).\label{Defmatch6}
\end{eqnarray}
\end{defi}

%The following proposition gives a method of constructing an adm-Poisson algebra structure on the direct sum $P_1\oplus P_2$ of the underlying vector spaces of two adm-Poisson algebras $P_1$ and $P_2$ such that
% $P_1$ and $P_2$ are adm-Poisson subalgebras.

By a straightforward proof, we get the following conclusion.
\begin{pro}\label{pro:matched pair}
Let $(P_1,\star_1)$ and $(P_2,\star_2)$ be two adm-Poisson
algebras, $\frkl_1,\frkr_1:P_1\to {\rm End}_{\mathbb F}(P_2)$,
$\frkl_2,\frkr_2:P_2\to {\rm End}_{\mathbb F}(P_1)$ be four linear
maps. Define a bilinear operation $\star:(P_1\oplus P_2)\otimes
(P_1\oplus P_2)\to P_1\oplus P_2$ on $P_1\oplus P_2$ by
\begin{eqnarray}
 (x+a)\c(y+b)=x\c_1y+\frkr_2(b)x+\frkl_2(a)y+\frkl_1(x)b+\frkr_1(y)a+a\c_2b,
\end{eqnarray}
where $x,y\in P_1,a,b\in P_2.$ Then $(P_1\oplus P_2,\c)$ is an
adm-Poisson algebra if and only if $(P_1,P_2$,
$\frkl_1,\frkr_1,\frkl_2,\frkr_2)$ is a matched pair of adm-Poisson
algebras. We denote this adm-Poisson algebra by
$P_1\bowtie_{\frkl_1,\frkr_1}^{\frkl_2,\frkr_2}P_2$ or simply
$P_1\bowtie P_2$.
\end{pro}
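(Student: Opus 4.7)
The plan is to verify the defining identity \eqref{c1} for an adm-Poisson algebra directly on $(P_1 \oplus P_2, \star)$ with the operation given in the proposition, and to observe that the resulting conditions rearrange into exactly the axioms of Definition~\ref{Defmatch}. Since Eq.~\eqref{c1} is trilinear in its three arguments, it suffices to check it on triples $(X,Y,Z)$ in which each entry is homogeneous, i.e.\ lies purely in $P_1$ or purely in $P_2$; this gives $2^3=8$ cases.

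The two pure cases $(X,Y,Z)\in P_1^3$ and $(X,Y,Z)\in P_2^3$ reduce directly to the adm-Poisson axioms for $(P_1,\star_1)$ and $(P_2,\star_2)$ respectively. For each mixed triple, I would expand $(X\star Y)\star Z$ and $X\star(Y\star Z)-\tfrac{1}{3}(\cdots)$ using the formula for $\star$ and then project onto $P_1$ and $P_2$. My claim is that the three configurations with two entries in $P_1$ and one in $P_2$ contribute, on the $P_2$-component, the three representation axioms \eqref{c2}--\eqref{c4} for $(\frkl_1,\frkr_1,P_2)$ over $(P_1,\star_1)$, and on the $P_1$-component the three compatibility conditions \eqref{Defmatch1}--\eqref{Defmatch3}. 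Symmetrically, the three configurations with two entries in $P_2$ and one in $P_1$ yield the representation axioms for $(\frkl_2,\frkr_2,P_1)$ together with \eqref{Defmatch4}--\eqref{Defmatch6}. The converse direction is then immediate: once all of these identities are assumed, reassembling the $P_1$- and $P_2$-components across all eight cases shows Eq.~\eqref{c1} holds on every homogeneous triple, hence on all of $P_1\oplus P_2$ by multilinearity.

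The main obstacle is purely combinatorial bookkeeping: in each mixed case the two sides of \eqref{c1} produce on the order of twenty terms once the $\tfrac{1}{3}$-correction is expanded, and one must recognize how these regroup into the asymmetric forms on the right-hand sides of \eqref{Defmatch1}--\eqref{Defmatch6}. The identity \eqref{eq:rep-property1} from Lemma~\ref{lem:rep-property1}, applied to each of the representations $(\frkl_1,\frkr_1)$ and $(\frkl_2,\frkr_2)$, is likely to be needed to cancel intermediate terms and bring the remainders into the stated form. In a written-out proof I would therefore work through a single representative mixed case, say $X,Y\in P_1$ and $Z\in P_2$, and verify explicitly that extracting the $P_2$-component reproduces \eqref{c2} while the $P_1$-component reproduces \eqref{Defmatch1}; the five remaining mixed cases are then handled by entirely analogous expansions, which I would only indicate.
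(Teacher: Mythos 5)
Your proposal is correct and coincides with the paper's intended argument: the paper offers no written proof of this proposition (it states only that the conclusion follows ``by a straightforward proof''), and that straightforward proof is precisely your decomposition of Eq.~\eqref{c1} over the eight homogeneous triples, with the $P_1$- and $P_2$-projections of the six mixed cases yielding the representation axioms \eqref{c2}--\eqref{c4} for $(\frkl_1,\frkr_1,P_2)$ and $(\frkl_2,\frkr_2,P_1)$ together with the compatibility conditions \eqref{Defmatch1}--\eqref{Defmatch6}. One small remark: Lemma~\ref{lem:rep-property1} is not actually needed here, since in each mixed case the two projections already reproduce the required identities term by term without further cancellation.
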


Note that the semi-direct product of an adm-Poisson algebra
$(P,\star)$ by a representation $(\frkl,\frkr,V)$ given in
Proposition \ref{pro:semi} is a special case of the matched pairs
of adm-Poisson algebras in Proposition \ref{pro:matched pair} when
$P_2=V$ is equipped with the zero multiplication, that is, $(P,V,
\frkl,\frkr,0,0)$ is a matched pair of adm-Poisson algebras.

%\jf{By Proposition \ref{pro:semi} and Proposition \ref{pro:matched pair}, we have
%\begin{cor}
% Let $(P,\star)$ be an adm-Poisson algebra and $(\frkl,\frkr,V)$ a
%representation of $(P,\star)$. Then $(P,V, \frkl,\frkr,0,0)$ is a
%matched pair of adm-Poisson algebras, where the adm-Poisson
%algebra structure on $V$ is trivial.
%\end{cor}}
%\begin{proof}
%By a similar proof as of Proposition~\ref{pro:semi}.
%\end{proof}

%\begin{rmk} In fact, there also exists a one-to-one correspondence between
%the matched pairs of an adm-Poisson algebra and its corresponding
%Poisson algebra, whose definition was given in \cite{NB1}.
%\end{rmk}

\section{Admissible Poisson bialgebras}

In this section, we introduce the notions of Manin triples of
adm-Poisson algebras and adm-Poisson bialgebras. The equivalence
between them is interpreted in terms of matched pairs of
adm-Poisson algebras.

\begin{defi}
 A bilinear form $\frak B$ on an adm-Poisson algebra $(P,\c)$ is called {\bf invariant} if
\begin{equation}
\frak B(x\c y,z)=\frak B(x, y\c z),\;\;\forall x,y,z\in P.
\end{equation}
\end{defi}

\begin{pro}
Let $(P,\c)$ be an adm-Poisson algebra. If there is a
nondegenerate symmetric invariant bilinear form $\frak B$ on $P$,
then the two representations $(L,R,P)$ and $(-R^*,-L^*,P^*)$ of the
adm-Poisson algebra $(P,\c)$ are equivalent. Conversely, if the
two representations $(L,R,P)$ and $(-R^*,-L^*,P^*)$ of the
adm-Poisson algebra $(P,\c)$ are equivalent, then there exists a
nondegenerate invariant bilinear form $\frak B$ on $P$.
\end{pro}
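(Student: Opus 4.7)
The plan is to transport a nondegenerate symmetric invariant bilinear form $\frak B$ into an intertwiner $\varphi\colon P\to P^*$ and, conversely, transport an intertwiner $\varphi\colon P\to P^*$ back into a bilinear form. Concretely, I would define $\varphi\colon P\to P^*$ by
\begin{equation*}
\langle\varphi(x),y\rangle=\frak B(x,y),\quad\forall x,y\in P,
\end{equation*}
and conversely, given an isomorphism $\varphi\colon P\to P^*$, define $\frak B$ by the same formula. Nondegeneracy of $\frak B$ is equivalent to $\varphi$ being a linear isomorphism in either direction, so the only content is the interaction of $\varphi$ (or $\frak B$) with $L(x)$ and $R(x)$.

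For the forward direction, I would check the two intertwining identities required by Eq. \eqref{lem:rep1} for the pair $(\frkl_1,\frkr_1,V_1)=(L,R,P)$ and $(\frkl_2,\frkr_2,V_2)=(-R^*,-L^*,P^*)$, namely
\begin{equation*}
\varphi L(x)=-R^*(x)\varphi,\qquad \varphi R(x)=-L^*(x)\varphi,\quad\forall x\in P.
\end{equation*}
Using the sign convention $\langle\rho^*(x)v^*,u\rangle=-\langle v^*,\rho(x)u\rangle$ from item (5) of the preliminaries, the first identity unfolds as
\begin{equation*}
\langle\varphi L(x)y,z\rangle=\frak B(x\c y,z),\qquad \langle -R^*(x)\varphi(y),z\rangle=\frak B(y,z\c x),
\end{equation*}
so it reduces to the equality $\frak B(x\c y,z)=\frak B(y,z\c x)$, which follows by combining invariance with the symmetry of $\frak B$. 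The second identity similarly reduces to $\frak B(y\c x,z)=\frak B(y,x\c z)$, which is invariance directly. So $\varphi$ is an equivalence of representations.

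For the converse, given such a $\varphi$, set $\frak B(x,y):=\langle\varphi(x),y\rangle$; $\frak B$ is nondegenerate because $\varphi$ is an isomorphism. Invariance of $\frak B$ follows by applying the second intertwining identity $\varphi R(y)=-L^*(y)\varphi$ to $x$ and pairing with $z$:
\begin{equation*}
\frak B(x\c y,z)=\langle\varphi R(y)x,z\rangle=\langle -L^*(y)\varphi(x),z\rangle=\langle\varphi(x),y\c z\rangle=\frak B(x,y\c z).
\end{equation*}
Note that this argument produces only invariance, not symmetry, in line with the statement. The computation is essentially bookkeeping with sign conventions, so the main (very minor) obstacle is keeping the dualization signs consistent; there are no nontrivial adm-Poisson identities needed beyond invariance, and in particular the axioms \eqref{c2}--\eqref{c4} defining a representation are guaranteed by the already established fact that $(L,R,P)$ and $(-R^*,-L^*,P^*)$ are representations.
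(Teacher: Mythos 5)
Your proposal is correct and follows essentially the same route as the paper: define $\varphi\colon P\to P^*$ by $\langle\varphi(x),y\rangle=\frak B(x,y)$ and verify the two intertwining identities $\varphi L(x)=-R^*(x)\varphi$ and $\varphi R(x)=-L^*(x)\varphi$ via invariance and symmetry. You additionally write out the converse (which the paper omits as ``similar''), correctly observing that only invariance, not symmetry, is recovered.
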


\begin{proof}
Since $\frak B$ is nondegenerate, there exists a linear
isomorphism $\varphi: P\rightarrow P^*$ defined by
$$\langle \varphi (x), y\rangle =\frak B(x,y),\;\;\forall x, y\in P.$$
For all $x,y,z\in P$, we have
\begin{eqnarray*}
\langle \varphi (L(x) y), z\rangle&=&\frak B(x\c y, z)=\frak B(z\c x, y)=\langle \varphi (y), z\c x\rangle
=\langle -R^*(x)\varphi(y), z\rangle;\\
\langle \varphi (R(x) y), z\rangle&=&\frak B(y\c x, z)=\frak B(y,
x\c z)=\langle \varphi (y), x\c z\rangle =\langle
-L^*(x)\varphi(y), z\rangle.
\end{eqnarray*}
Thus  the two representations $(L,R,P)$ and $(-R^*,-L^*,P^*)$ of the
adm-Poisson algebra $(P,\c)$ are equivalent. The converse can be
proved similarly. We omit the details.
\end{proof}

\begin{defi}
Let $(P,\star_P)$ be an adm-Poisson algebra. Suppose that
$(P^*,\c_{P^*})$ is an adm-Poisson algebra on its dual space
$P^*$. If there exists an adm-Poisson algebra structure on the
direct sum $P\oplus P^*$ of the underlying vector spaces of $P$
and $P^*$ such that $(P,\star_P)$ and $(P^*,\star_{P^*})$ are
adm-Poisson subalgebras and the following symmetric bilinear form
$\mathcal{B}_d$ on $P\oplus P^*$ given by
    \begin{equation}\label{cstandardmanin}
    \mathcal{B}_d(x+a^*,y+b^*)=\langle x,b^*\rangle +\langle a^*,y\rangle,\quad \forall\ x,y\in P,\ a^*,b^*\in P^*
    \end{equation}
is invariant, then $(P\oplus P^*,P,P^*)$ is called a
\textbf{\textup{(standard) Manin triple of adm-Poisson algebras}}
associated to ${\mathcal B}_d$.
\end{defi}

\begin{pro}\label{cstruct1}
Let $(P,\star_P)$ be an adm-Poisson algebra. Suppose
$(P^*,\star_{P^*})$ is an adm-Poisson algebra structure on the
dual space $P^*$. Then $(P\oplus P^*,P,P^*)$ is a standard Manin
triple of adm-Poisson algebras associated to  ${\mathcal B}_d$
defined by Eq.~(\ref{cstandardmanin}) if and only if
$(P,P^*,-R_{P}^*,-L_P^*,$ $-R_{P^*}^*,-L_{P^*}^*)$ is a matched
pair of adm-Poisson algebras.
\end{pro}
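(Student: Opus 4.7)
The plan is to split the biconditional into an algebra part and a bilinear-form part, and reduce each to a direct consequence of Proposition \ref{pro:matched pair} together with the defining adjoint relation for dual maps.

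First I would use Proposition \ref{pro:matched pair} to translate the problem. Since $(P,\star_P)$ and $(P^*,\star_{P^*})$ are required to be adm-Poisson subalgebras of $P\oplus P^*$, the multiplication on $P\oplus P^*$ is determined, in mixed terms, by four linear maps $\frkl_1,\frkr_1\colon P\to \End_{\mathbb F}(P^*)$ and $\frkl_2,\frkr_2\colon P^*\to \End_{\mathbb F}(P)$ via
\[
(x+a^*)\star(y+b^*)=x\star_P y+\frkr_2(b^*)x+\frkl_2(a^*)y+\frkl_1(x)b^*+\frkr_1(y)a^*+a^*\star_{P^*}b^*.
\]
By Proposition \ref{pro:matched pair}, $\star$ is an adm-Poisson multiplication on $P\oplus P^*$ if and only if $(P,P^*,\frkl_1,\frkr_1,\frkl_2,\frkr_2)$ is a matched pair of adm-Poisson algebras. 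Thus the question reduces to showing that the invariance of $\mathcal B_d$ is equivalent to the identification $\frkl_1=-R_P^*$, $\frkr_1=-L_P^*$, $\frkl_2=-R_{P^*}^*$, $\frkr_2=-L_{P^*}^*$.

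Next I would extract the invariance constraints by testing $\mathcal B_d(u\star v,w)=\mathcal B_d(u,v\star w)$ on triples $(u,v,w)$ drawn from $P\cup P^*$. Triples lying entirely in $P$ or entirely in $P^*$ give $0=0$ because $\mathcal B_d$ vanishes on $P\times P$ and on $P^*\times P^*$ and each subspace is closed under $\star$. The informative cases are the mixed triples. For instance, evaluating invariance on $(x,y,c^*)$ with $x,y\in P$, $c^*\in P^*$ gives $\langle x\star_P y,c^*\rangle=\langle x,\frkl_1(y)c^*\rangle$, which by definition of $R_P^*$ forces $\frkl_1(y)=-R_P^*(y)$. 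Similarly, invariance on $(c^*,x,y)$ yields $\frkr_1(x)=-L_P^*(x)$, and the three symmetric triples swapping the roles of $P$ and $P^*$ yield $\frkl_2=-R_{P^*}^*$ and $\frkr_2=-L_{P^*}^*$. The remaining mixed triples (with two $P^*$-entries) produce the analogous identities with $P$ and $P^*$ swapped, and so add no new information.

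Finally I would check the converse by the same computation read backwards: with the specific choice $\frkl_1=-R_P^*$, $\frkr_1=-L_P^*$, $\frkl_2=-R_{P^*}^*$, $\frkr_2=-L_{P^*}^*$, the dual-map definition
\[
\langle R_P^*(y)c^*,x\rangle=-\langle c^*,R_P(y)x\rangle=-\langle c^*,x\star_P y\rangle
\]
(and its three analogues) makes $\mathcal B_d$ invariant on every mixed triple, and the pure-$P$ and pure-$P^*$ triples are automatic. Combining both directions yields the claimed equivalence. The only mild obstacle is bookkeeping: one must keep track of which summand ($P$ or $P^*$) each factor of a mixed product lies in, and handle the sign in the definition of the dual map carefully; once that is done the argument is essentially forced by the pairing.
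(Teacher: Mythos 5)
Your proposal is correct and is essentially the standard argument: the paper itself gives no details here, saying only that the claim ``follows from the same proof of [Bai, Theorem 2.2.1]'', and that proof is exactly your reduction --- decompose the mixed products into four structure maps, invoke Proposition \ref{pro:matched pair} for the algebra condition, and test invariance of $\mathcal{B}_d$ on mixed triples to force $\frkl_1=-R_P^*$, $\frkr_1=-L_P^*$, $\frkl_2=-R_{P^*}^*$, $\frkr_2=-L_{P^*}^*$. One trivial bookkeeping slip: of the six mixed triple types, two with two $P$-entries pin down $\frkl_1,\frkr_1$ and two with two $P^*$-entries pin down $\frkl_2,\frkr_2$, while the remaining two (one of each kind, e.g.\ $(x,b^*,z)$) are the ones that are automatically satisfied --- so the triples with two $P^*$-entries do carry new information, contrary to your last sentence, but this does not affect the validity of the argument.
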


\begin{proof}
It follows from the same proof of \cite[Theorem 2.2.1]{Bai2}.
\end{proof}

\begin{thm}\label{thm:corr}
Let $(P,\c_P)$ be an adm-Poisson algebra. Suppose that there is an
adm-Poisson structure $``\c_{P^*}"$ on its dual space $P^*$ given
by a linear map $\alpha^*: P^*\otimes P^*\rightarrow P^*$. Then
$(P,P^*,-R_{P}^*,-L_P^*,$ $-R_{P^*}^*,-L_{P^*}^*)$ is a matched
pair of adm-Poisson algebras if and only if
 $\alpha$ satisfies the following equations
\begin{eqnarray}
    &&\alpha(x\star_P y)-(R_P(y)\otimes {\rm id})\alpha(x)-({\rm id}\otimes L_P(x))\alpha(y)\nonumber\\
    &=&\frac{1}{3}\big((L_P(y)\otimes {\rm id})\alpha(x)-({\rm id}\otimes L_P(y))\alpha(x)+(L_P(x)\otimes {\rm id})\alpha(y)-(R_P(x)\otimes {\rm id})\alpha(y)\nonumber\\
    &&+\tau(-\alpha(x\star_P y)+(L_P(x)\otimes {\rm id})\alpha(y)+(L_P(y)\otimes {\rm id})\alpha(x))\big),\label{Defbi1}\\
    &&\alpha(x\star_P y)-(R_P(y)\otimes {\rm id})\alpha(x)-({\rm id}\otimes L_P(x))\alpha(y)\nonumber \\
    &=&\frac{1}{3}\big(\tau((L_P(x)\otimes {\rm id})\alpha(y)+(L_P(y)\otimes {\rm id})\alpha(x)-(R_P(y)\otimes {\rm id})\alpha(x)-({\rm id}\otimes L_P(x))\alpha(y))\nonumber \\
    &&-\alpha(y\star_P x)+(L_P(x)\otimes {\rm id})\alpha(y)+(L_P(y)\otimes {\rm id})\alpha(x)\big) ,\label{Defbi2}\\
    &&(({\rm id}\otimes R_P(y))\alpha(x)-(L_P(y)\otimes {\rm id})\alpha(x)+\tau\left(({\rm id}\otimes R_P(x))\alpha(y)-(L_P(x)\otimes {\rm id})\alpha(y)\right)\nonumber \\
    &=&\frac{1}{3}\big((R_P(y)\otimes {\rm id})\alpha(x)-({\rm id}\otimes L_P(y))\alpha(x)+({\rm id}\otimes L_P(x))\alpha(y)-(R_P(x)\otimes {\rm id})\alpha(y)\nonumber \\
    &&+\tau(\alpha(y\star_P x)-\alpha(x\star_P y))\big)\label{Defbi3}
\end{eqnarray}
for all $x,y\in P$.
\end{thm}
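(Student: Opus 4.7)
The plan is to take the six matched-pair compatibility equations \eqref{Defmatch1}--\eqref{Defmatch6} specialised to $\frkl_1=-R_P^*,\ \frkr_1=-L_P^*,\ \frkl_2=-R_{P^*}^*,\ \frkr_2=-L_{P^*}^*$ and, by pairing them with test covectors, rewrite each one as a tensor identity on $\alpha\colon P\to P\otimes P$. Note first that the two representation hypotheses required in Definition \ref{Defmatch} are free: $(-R_P^*,-L_P^*,P^*)$ is a representation of $(P,\star_P)$ by the example after Remark \ref{rmk:same}, and $(-R_{P^*}^*,-L_{P^*}^*,P)$ is a representation of $(P^*,\star_{P^*})$ by the same construction applied to $P^*$. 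Hence the content of the matched pair condition reduces entirely to \eqref{Defmatch1}--\eqref{Defmatch6}.

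The core dictionary is the duality between $\alpha$ and $\star_{P^*}$: for $x\in P$ and $a^*,b^*\in P^*$,
\[
\langle\alpha(x),a^*\otimes b^*\rangle=\langle x,a^*\star_{P^*}b^*\rangle,
\]
together with the identities $\langle -L_{P^*}^*(a^*)x,b^*\rangle=\langle x,b^*\star_{P^*}a^*\rangle$ and $\langle -R_{P^*}^*(a^*)x,b^*\rangle=\langle x,a^*\star_{P^*}b^*\rangle$ coming from convention~(5); the analogous identities hold for $-L_P^*$ and $-R_P^*$ acting on $P^*$. With these formulas the typical summand of a matched-pair equation, for instance $x\star_P(-L_{P^*}^*(a^*)y)$ evaluated against $b^*$, translates into the pairing of $(R_P(y)\otimes\mathrm{id})\alpha(x)$ (or of a twisted analogue) with $a^*\otimes b^*$ — this is the standard Aguiar-type bookkeeping already used for coboundary infinitesimal and Poisson bialgebras.

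Next I would process \eqref{Defmatch1}, \eqref{Defmatch2}, \eqref{Defmatch3} in turn. Pairing each with an arbitrary $b^*\in P^*$ (keeping $x,y\in P$ and $a^*\in P^*$ generic), substituting the dictionary, and then exploiting the arbitrariness of $a^*\otimes b^*$ to strip it off, produces the three tensor identities \eqref{Defbi1}, \eqref{Defbi2}, \eqref{Defbi3} respectively, up to collecting the twist $\tau$ that records which slot of $\alpha$ a given operator acts on. For the remaining three equations \eqref{Defmatch4}--\eqref{Defmatch6}, which live in $P^*$, I would observe that the construction $P\leftrightarrow P^*$, $\star_P\leftrightarrow\star_{P^*}$ exchanges \eqref{Defmatch1}--\eqref{Defmatch3} with \eqref{Defmatch4}--\eqref{Defmatch6}; since $\alpha$ and $\alpha^*$ encode precisely the same data, the resulting tensor identities coincide with \eqref{Defbi1}--\eqref{Defbi3}, so no new conditions arise.

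The main obstacle is purely the bookkeeping: each matched-pair equation has roughly a dozen terms, each term pairs with $b^*$ to yield either $\langle\alpha(x\star_P y),a^*\otimes b^*\rangle$, $\langle(R_P(y)\otimes\mathrm{id})\alpha(x),a^*\otimes b^*\rangle$, $\langle({\rm id}\otimes L_P(x))\alpha(y),a^*\otimes b^*\rangle$, or one of their $\tau$-twisted versions, and the signs from convention~(5) must be tracked carefully. I would therefore present the full translation for \eqref{Defmatch1}$\leadsto$\eqref{Defbi1} as a representative calculation and remark that \eqref{Defbi2} and \eqref{Defbi3} follow by the same procedure applied to \eqref{Defmatch2} and \eqref{Defmatch3}, while \eqref{Defmatch4}--\eqref{Defmatch6} are the $P^*$-duals of the first three and hence contribute no further constraints.
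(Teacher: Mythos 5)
Your translation of \eqref{Defmatch1}--\eqref{Defmatch3} into \eqref{Defbi1}--\eqref{Defbi3} via the pairing dictionary, and your observation that the two representation hypotheses are automatic (dual of the adjoint representation), match the paper and are fine. The forward direction of the theorem is exactly this bookkeeping.

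The gap is in your treatment of \eqref{Defmatch4}--\eqref{Defmatch6}. It is true that the swap $P\leftrightarrow P^*$ carries \eqref{Defmatch1}--\eqref{Defmatch3} to \eqref{Defmatch4}--\eqref{Defmatch6}, but the tensor identities you obtain by dualizing are \emph{not} \eqref{Defbi1}--\eqref{Defbi3} again: they are the mirror conditions, expressing how the coproduct dual to $\star_P$ interacts with $\star_{P^*}$, rather than how $\alpha$ (dual to $\star_{P^*}$) interacts with $\star_P$. Saying ``$\alpha$ and $\alpha^*$ encode the same data'' does not make these two families of identities coincide; the assertion that the system \eqref{Defbi1}--\eqref{Defbi3} is invariant under the swap is precisely the nontrivial content of the converse direction, and it is where essentially all of the paper's proof is spent. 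Concretely, the paper first combines \eqref{Defbi1} with the $\tau$-twist of \eqref{Defbi2} to derive the symmetry relation \eqref{eq:important relation1}, and adds \eqref{Defbi3} to its $x\leftrightarrow y$ interchange to derive \eqref{eq:important relation2}; only with these two auxiliary relations in hand can one show that \eqref{Defbi1}, \eqref{Defbi2}, \eqref{Defbi3} imply the tensor forms of \eqref{Defmatch4}, \eqref{Defmatch5}, \eqref{Defmatch6} respectively. (The analogous statement for Lie bialgebras --- that the cocycle condition for $\delta$ is equivalent to the cocycle condition for the dual of the bracket --- is likewise true but requires an argument; it is not a formal consequence of duality.) As written, your proof establishes only that the matched pair condition implies \eqref{Defbi1}--\eqref{Defbi3} together with their three unstated duals, and omits the argument that the duals are redundant.
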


\begin{proof}
By a direct calculation, in the case that $\frkl_1=-R_P^*,\frkr_1=-L_P^*,
\frkl_2=-R_{P^*}^*$ and $\frkr_2=-L_{P^*}^*$, we have
\begin{eqnarray*}
{\rm Eq.}\ (\ref{Defmatch1})\Longleftrightarrow{\rm Eq.}\ (\ref{Defbi1}),\quad{\rm Eq.}\ (\ref{Defmatch2})\Longleftrightarrow{\rm Eq.}\ (\ref{Defbi2}), \quad{\rm Eq.}\ (\ref{Defmatch3})\Longleftrightarrow{\rm Eq.}\
(\ref{Defbi3}).
\end{eqnarray*}
Thus if $(P,P^*,-R_{P}^*,-L_P^*,$ $-R_{P^*}^*,-L_{P^*}^*)$ is a matched
pair of adm-Poisson algebras, then Eqs.~(\ref{Defbi1})-(\ref{Defbi3}) hold.

Conversely, let $x,y\in P$. By Eq.~(\ref{Defbi2}), we have
\begin{eqnarray}
   &&\tau(\alpha(y\star_P x)-(R_P(x)\otimes {\rm id})\alpha(y)-({\rm id}\otimes L_P(y))\alpha(x))\nonumber \\
    &=&\frac{1}{3}\big((L_P(y)\otimes {\rm id})\alpha(x)+(L_P(x)\otimes {\rm id})\alpha(y)-(R_P(x)\otimes {\rm id})\alpha(y)-({\rm id}\otimes L_P(y))\alpha(x)\nonumber \\
    &&+\tau(-\alpha(x\star_P y)+(L_P(y)\otimes {\rm id})\alpha(x)+(L_P(x)\otimes {\rm id})\alpha(y))\big).\label{eq:bialgebra22}
\end{eqnarray}
By Eqs.~(\ref{Defbi1})-(\ref{eq:bialgebra22}), we obtain
\begin{eqnarray}
  &&\alpha(x\star_P y)-(R_P(y)\otimes {\rm id})\alpha(x)-({\rm id}\otimes L_P(x))\alpha(y)\nonumber\\
  &=&\tau(\alpha(y\star_P x)-(R_P(x)\otimes {\rm id})\alpha(y)-({\rm id}\otimes L_P(y))\alpha(x)).\label{eq:important relation1}
\end{eqnarray}
Interchanging $x$ with $y$ in Eq.~(\ref{Defbi3}), we have
\begin{eqnarray}
 &&(({\rm id}\otimes R_P(x))\alpha(y)-(L_P(x)\otimes {\rm id})\alpha(y)+\tau\left(({\rm id}\otimes R_P(y))\alpha(x)-(L_P(y)\otimes {\rm id})\alpha(x)\right)\nonumber \\
    &=&\frac{1}{3}\big((R_P(x)\otimes {\rm id})\alpha(y)-({\rm id}\otimes L_P(x))\alpha(y)+({\rm id}\otimes L_P(y))\alpha(x)-(R_P(y)\otimes {\rm id})\alpha(x)\nonumber \\
    &&+\tau(\alpha(x\star_P y)-\alpha(y\star_P x))\big).\label{eq:bialgebra33}
\end{eqnarray}
Adding Eq.~(\ref{Defbi3}) and  Eq.~(\ref{eq:bialgebra33})
together, we obtain
\begin{eqnarray}
  &&(L_P(x)\otimes {\rm id})\alpha(y)-({\rm id}\otimes R_P(x))\alpha(y)+\tau((L_P(x)\otimes {\rm id})\alpha(y)+(L_P(y)\otimes {\rm id})\alpha(x))\nonumber\\
  &=&({\rm id}\otimes R_P(y))\alpha(x)-(L_P(y)\otimes {\rm id})\alpha(x)+\tau(({\rm id}\otimes R_P(y) )\alpha(x)+({\rm id}\otimes R_P(x) {\rm id})\alpha(y)).\label{eq:important relation2}
\end{eqnarray}

By Eqs.~\eqref{eq:important relation1} and \eqref{eq:important relation2}, Eq.~(\ref{Defbi1}) implies
\begin{eqnarray*}
    &&\alpha(x\star_P y)-(R_P(y)\otimes {\rm id})\alpha(x)-({\rm id}\otimes L_P(x))\alpha(y)\nonumber\\
    &=&\frac{1}{3}\big(({\rm id}\otimes R_P(x))\alpha(y)-\alpha(y\star_P x)+({\rm id}\otimes R_P(y))\alpha(x)+\tau((-{\rm id}\otimes L_P(x))\alpha(y)\nonumber\\
    &&-(R_P(y)\otimes {\rm id})\alpha(x)+({\rm id}\otimes R_P(x))\alpha(y)+({\rm id}\otimes R_P(y))\alpha(x))\big),
    \end{eqnarray*}
which is equivalent to Eq.~(\ref{Defmatch4}).

By Eqs.~\eqref{eq:important relation1} and \eqref{eq:important relation2}, Eq.~(\ref{Defbi2}) implies
\begin{eqnarray*}
    &&\alpha(y\star_P x)-(R_P(x)\otimes {\rm id})\alpha(y)-({\rm id}\otimes L_P(y))\alpha(x)\nonumber\\
    &=&\frac{1}{3}\big(({\rm id}\otimes R_P(y))\alpha(x)+({\rm id}\otimes R_P(x))\alpha(y)-({\rm id}\otimes L_P(x))\alpha(y)-(R_P(y)\otimes {\rm id})\alpha(x)\nonumber\\
    &&+\tau(-\alpha(y\star_P x)+({\rm id}\otimes R_P(x))\alpha(y)+({\rm id}\otimes R_P(y) )\alpha(x))\big)\label{eq:bialgebra4},
    \end{eqnarray*}
which is equivalent to Eq.~(\ref{Defmatch5}).

By Eq.~\eqref{eq:important relation1}, Eq.~(\ref{Defbi3}) implies
\begin{eqnarray*}
    &&(L_P(x)\otimes {\rm id})\alpha(y)-({\rm id}\otimes R_P(x))\alpha(y)+\tau((L_P(y)\otimes {\rm id})\alpha(x)-({\rm id}\otimes R_P(y))\alpha(x))\nonumber\\
    &=&\frac{1}{3}\big(({\rm id}\otimes L_P(x))\alpha(y)+(R_P(y)\otimes {\rm id})\alpha(x)-\alpha(y\star_P x)\nonumber\\
    &&+\tau(\alpha(y\star_P x)-({\rm id}\otimes L_P(x))\alpha(y)-(R_P(y)\otimes {\rm id} )\alpha(x))\big)\label{eq:bialgebra5},
    \end{eqnarray*}
which is equivalent to Eq.~(\ref{Defmatch6}).

Therefore if Eqs.~(\ref{Defbi1})-(\ref{Defbi3}) hold, then $(P,P^*,-R_{P}^*,-L_P^*,$ $-R_{P^*}^*,-L_{P^*}^*)$ is a matched pair of adm-Poisson algebras.
\end{proof}

\begin{lem}\label{lem:cosp}
Let $P$ be a vector space and $\alpha: P  \rightarrow P\otimes P$
be a linear map. Then the dual map $\alpha^* : P^*\otimes P^*
\rightarrow P^*$ defines an adm-Poisson  algebra structure on
$P^*$ if and only if $\alpha$ satisfies
\begin{eqnarray}
&&({\rm id}\otimes\alpha)\alpha(x)-(\alpha\otimes{\rm
id})\alpha(x)+\frac{1}{3}\big(({\rm id}\otimes \tau)({\rm
id}\otimes \alpha)\alpha(x)-(\tau\otimes{\rm id})({\rm
id}\otimes\alpha)\alpha(x)\nonumber\\
      &&-({\rm id}\otimes\tau)(\tau\otimes{\rm id})({\rm id}\otimes\alpha)\alpha(x)+(\tau\otimes{\rm id})({\rm id}\otimes\tau)({\rm
      id}\otimes\alpha)\alpha(x)\big)=0,\;\;\forall x\in P.\label{eq:coalgebra}
\end{eqnarray}
\end{lem}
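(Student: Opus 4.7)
The plan is to dualize the adm-Poisson identity \eqref{c1} directly. Introduce the candidate multiplication $\xi \star_{P^*} \eta := \alpha^*(\xi\otimes\eta)$ on $P^*$. By definition, $(P^*, \star_{P^*})$ is an adm-Poisson algebra if and only if, for every $\xi,\eta,\zeta\in P^*$,
\[
(\xi \star_{P^*} \eta)\star_{P^*} \zeta - \xi \star_{P^*}(\eta \star_{P^*} \zeta) + \tfrac{1}{3}\bigl(-\xi\star_{P^*}(\zeta\star_{P^*}\eta) + \zeta\star_{P^*}(\xi\star_{P^*}\eta) + \eta\star_{P^*}(\xi\star_{P^*}\zeta) - \eta\star_{P^*}(\zeta\star_{P^*}\xi)\bigr) = 0,
\]
and since this is an identity in $P^*$, it is equivalent to the vanishing of its evaluation on every $x\in P$.

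I would then rewrite each of the six terms, evaluated at $x$ and using Sweedler-type notation $\alpha(x) = \sum x_{(1)} \otimes x_{(2)}$, as $\langle \xi \otimes \eta \otimes \zeta,\, T(x)\rangle$ for a suitable $T : P \to P^{\otimes 3}$ built from $\alpha$ and the twist $\tau$. The two ``associator'' terms yield $(\alpha\otimes{\rm id})\alpha(x)$ and $({\rm id}\otimes\alpha)\alpha(x)$, respectively, while each of the four ``Poisson correction'' terms merely reorders the evaluation of $(\xi,\eta,\zeta)$ against the three tensor factors, and hence corresponds to applying a permutation to $({\rm id}\otimes\alpha)\alpha(x)$: for example $\xi \star_{P^*}(\zeta\star_{P^*}\eta)$ produces $({\rm id}\otimes\tau)({\rm id}\otimes\alpha)\alpha(x)$, $\eta\star_{P^*}(\xi\star_{P^*}\zeta)$ produces $(\tau\otimes{\rm id})({\rm id}\otimes\alpha)\alpha(x)$, and the remaining two cyclic reorderings $\zeta\star_{P^*}(\xi\star_{P^*}\eta)$ and $\eta\star_{P^*}(\zeta\star_{P^*}\xi)$ produce the two compositions of two twists, $({\rm id}\otimes\tau)(\tau\otimes{\rm id})({\rm id}\otimes\alpha)\alpha(x)$ and $(\tau\otimes{\rm id})({\rm id}\otimes\tau)({\rm id}\otimes\alpha)\alpha(x)$, that also appear in \eqref{eq:coalgebra}.

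Assembling the six contributions and factoring out the arbitrary simple tensor $\xi\otimes\eta\otimes\zeta$, the adm-Poisson axiom for $\alpha^*$ becomes equivalent to the vanishing of a fixed element of $P^{\otimes 3}$. After multiplying through by $-1$ so that the leading term is $({\rm id}\otimes\alpha)\alpha(x)$ rather than $(\alpha\otimes{\rm id})\alpha(x)$, this element matches \eqref{eq:coalgebra} term-by-term. The only delicate step is the permutation bookkeeping for the four correction terms; once the correct twist on $P^{\otimes 3}$ is identified for each, both implications follow at once from the non-degeneracy of the evaluation pairing $(P^*)^{\otimes 3} \times P^{\otimes 3} \to \mathbb{F}$.
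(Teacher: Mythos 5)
Your proposal is correct and is exactly the computation the paper leaves implicit behind its one-line ``It is straightforward'': one dualizes the adm-Poisson identity \eqref{c1} for $\alpha^*$, identifies each of the six terms evaluated on $x$ with a pairing of $\xi\otimes\eta\otimes\zeta$ against a permuted copy of $(\alpha\otimes{\rm id})\alpha(x)$ or $({\rm id}\otimes\alpha)\alpha(x)$, and concludes by non-degeneracy of the pairing. Your assignment of the four twists to the four correction terms checks out (e.g.\ $\zeta\star_{P^*}(\xi\star_{P^*}\eta)$ indeed corresponds to $({\rm id}\otimes\tau)(\tau\otimes{\rm id})({\rm id}\otimes\alpha)\alpha(x)$, since that composite sends $a\otimes b\otimes c$ to $b\otimes c\otimes a$), and the overall sign flip recovers \eqref{eq:coalgebra} exactly.
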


%%星双代数定义

\begin{proof}
It is straightforward.
\end{proof}

See \cite{Rem05} for more details on other Lie admissible
coalgebras and their applications.

\begin{defi}\label{adm-Poisson bialgebras}
Let $(P,\star)$ be an adm-Poisson algebra. An {\bf adm-Poisson
bialgebra} structure on $P$ is a linear map $\alpha:P\to P\otimes
P$ such that $\alpha$ satisfies
Eqs.~(\ref{Defbi1})-(\ref{Defbi3}) and (\ref{eq:coalgebra}).
%\begin{enumerate}
%\item $\alpha^*:P^*\otimes P^*\rightarrow P^*$ defines an
%adm-Poisson algebra structure on $P^*$, that is,
%Eq.~(\ref{eq:coalgebra}) holds; \item
%\end{enumerate}
%We denote it by $(P,\c,\alpha)$.% or $(P,P^*)$.
\end{defi}

Combining Proposition~\ref{cstruct1} and Theorem~\ref{thm:corr},
we have the following conclusion.

\begin{thm}
Let $(P,\star_P)$ be an adm-Poisson algebra. Let $\alpha:P\to
P\otimes P$ be a linear map such that $\alpha^*:P^*\otimes P^*\to
P^*$ defines an adm-Poisson algebra structure on $P^*$.  Then the
following conditions are equivalent:
\begin{itemize}
    \item[\textup{(1)}]
     $(P,\c_P,\alpha)$ is an adm-Poisson bialgebra;
     \item[\textup{(2)}]
     $(P,P^*,-R_P^*,-L_P^*,-R_{P^*}^*,-L_{P^*}^*)$ is a matched pair of adm-Poisson algebras;
     \item[\textup{(3)}]
     $(P\oplus P^*,P,P^*)$ is a standard Manin triple of adm-Poisson
     algebras associated to  ${\mathcal B}_d$ defined by
Eq.~(\ref{cstandardmanin}).
\end{itemize}
\end{thm}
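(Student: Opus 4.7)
The plan is to assemble the theorem from three preceding results: Lemma \ref{lem:cosp}, Theorem \ref{thm:corr}, and Proposition \ref{cstruct1}. No new computation is required; the work consists in checking that the standing hypothesis on $\alpha^{*}$ contributes exactly the coassociative-type condition \eqref{eq:coalgebra} from Definition \ref{adm-Poisson bialgebras}, so that the remaining content of being an adm-Poisson bialgebra reduces to \eqref{Defbi1}--\eqref{Defbi3}.

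First I would establish (1) $\Leftrightarrow$ (2). By Lemma \ref{lem:cosp}, the assumption that $\alpha^{*}$ defines an adm-Poisson algebra structure on $P^{*}$ is equivalent to $\alpha$ satisfying \eqref{eq:coalgebra}. Hence, under the standing hypothesis, Definition \ref{adm-Poisson bialgebras} reduces to requiring $\alpha$ to satisfy \eqref{Defbi1}--\eqref{Defbi3}. On the other hand, Theorem \ref{thm:corr} shows precisely that, with $(P^{*},\star_{P^{*}})$ given by $\alpha^{*}$, the tuple $(P,P^{*},-R_{P}^{*},-L_{P}^{*},-R_{P^{*}}^{*},-L_{P^{*}}^{*})$ forms a matched pair of adm-Poisson algebras if and only if $\alpha$ satisfies the same three equations \eqref{Defbi1}--\eqref{Defbi3}. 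Combining these two observations yields (1) $\Leftrightarrow$ (2).

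Finally, (2) $\Leftrightarrow$ (3) is exactly Proposition \ref{cstruct1}: given adm-Poisson structures on both $P$ and $P^{*}$, a standard Manin triple on $P\oplus P^{*}$ with respect to the canonical pairing \eqref{cstandardmanin} is equivalent to the matched-pair data with the specified coadjoint-type representations $(-R_{P}^{*},-L_{P}^{*},-R_{P^{*}}^{*},-L_{P^{*}}^{*})$. Chaining the two equivalences completes the proof. The only point worth flagging is that Theorem \ref{thm:corr} has already reduced the six matched-pair identities to the three equations \eqref{Defbi1}--\eqref{Defbi3} by exploiting the symmetry between the action of $P$ on $P^{*}$ and the action of $P^{*}$ on $P$ in the coadjoint setting; hence the identities \eqref{Defmatch4}--\eqref{Defmatch6} need not be re-verified, and no genuine obstacle arises.
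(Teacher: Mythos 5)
Your proposal is correct and follows essentially the same route as the paper, which proves the theorem simply by combining Proposition~\ref{cstruct1} and Theorem~\ref{thm:corr} (with Lemma~\ref{lem:cosp} implicitly accounting for the coalgebra condition via the standing hypothesis on $\alpha^*$). Your additional remark that the six matched-pair identities have already been reduced to Eqs.~(\ref{Defbi1})--(\ref{Defbi3}) in Theorem~\ref{thm:corr} is exactly the point the paper relies on.
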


There is a one-to-one correspondence between adm-Poisson
bialgebras and Poisson bialgebras.

\begin{pro}\label{cbialgequ}
Let $(P,\star,\alpha)$ be an adm-Poisson bialgebra. Define two
linear maps $\delta,\Delta:P\to P\otimes P$ by
$$\delta=\frac{1}{2}\left(\alpha-\tau\alpha\right),\quad \Delta=\frac{1}{2}\left(\alpha+\tau\alpha\right).$$
Then $(P,[\;,\;],\circ,\delta,\Delta)$ is a Poisson bialgebra.
Conversely, let $(P,[\;,\;],\circ,\delta,\Delta)$ be a Poisson
bialgebra. Define a linear map $\alpha:P\to P\otimes P$ by
$$\alpha=\delta+\Delta.$$
Then $(P,\star,\alpha)$ is an adm-Poisson bialgebra.
\end{pro}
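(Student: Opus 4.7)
The plan is to use the Manin-triple bridge. By the theorem just established, an adm-Poisson bialgebra $(P,\star,\alpha)$ is the same datum as a standard Manin triple $(P\oplus P^*,P,P^*)$ of adm-Poisson algebras associated to the bilinear form $\mathcal{B}_d$ of~(\ref{cstandardmanin}); by the analogous theorem in~\cite{NB1}, a Poisson bialgebra $(P,[\,,\,],\circ,\delta,\Delta)$ is the same datum as a standard Manin triple of Poisson algebras on $P\oplus P^*$ associated to the same $\mathcal{B}_d$. I will convert one kind of Manin triple on $D:=P\oplus P^*$ into the other by applying the algebra-level polarization--depolarization~(\ref{eq:PA-sPA})--(\ref{eq:sPA-PA}) to the big algebra $(D,\star_D)$ itself.

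The resulting Poisson algebra $(D,[\,,\,]_D,\circ_D)$ is defined on $D$ by the same formulas as in~(\ref{eq:sPA-PA}). Since $P$ and $P^*$ are adm-Poisson subalgebras of $D$, they remain subalgebras for $([\,,\,]_D,\circ_D)$; the induced Poisson structure on $P$ is exactly the pair $(\circ,[\,,\,])$ in the statement, while the induced Poisson structure on $P^*$ is $(\Delta^*,\delta^*)$, using that the symmetric part of $\alpha$ is $\Delta$ and the antisymmetric part is $\delta$, so that the symmetric and antisymmetric parts of $\alpha^*$ are $\Delta^*$ and $\delta^*$ respectively. Once the invariance of $\mathcal{B}_d$ is checked for this Poisson structure, the Poisson Manin-triple theorem of~\cite{NB1} produces exactly the Poisson bialgebra $(P,[\,,\,],\circ,\delta,\Delta)$ of the statement.

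The remaining step, which I expect to be the only substantive one, is the invariance equivalence: for a symmetric bilinear form $\mathcal{B}$ on an adm-Poisson algebra, the identity $\mathcal{B}(x\star y,z)=\mathcal{B}(x,y\star z)$ holds for all $x,y,z$ if and only if both $\mathcal{B}(x\circ y,z)=\mathcal{B}(x,y\circ z)$ and $\mathcal{B}([x,y],z)=\mathcal{B}(x,[y,z])$ hold. The forward direction is an immediate linear combination using $\star=\circ+[\,,\,]$. The converse extracts the two Poisson invariance identities by forming one half the sum and one half the difference of adm-Poisson invariance with its swap $x\leftrightarrow y$, using the symmetry of $\mathcal{B}$ to rewrite $\mathcal{B}(x,z\star y)$ as $\mathcal{B}(y\star x,z)$ and close the computation. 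Applied to $\mathcal{B}_d$ on $D$, this completes the Manin-triple conversion. The converse direction of the proposition then runs the same argument backwards: given a Poisson bialgebra, depolarize on $D$ to obtain an adm-Poisson Manin triple, and read off $\alpha=\delta+\Delta$.
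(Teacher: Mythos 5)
Your proof is correct, but it takes a genuinely different route from the paper's. The paper proves this proposition by brute force: it introduces the defects $A,B,C,D$ of the four Poisson-bialgebra compatibility conditions and the defects $E,F,G$ of the three adm-Poisson-bialgebra conditions (\ref{Defbi1})--(\ref{Defbi3}), and then exhibits each family explicitly as linear combinations of the other (together with their $x\leftrightarrow y$ swaps and $\tau$-images), so that one family vanishes iff the other does. You instead route everything through the Manin-triple characterizations: the theorem preceding this proposition for the adm-Poisson side, the corresponding theorem of \cite{NB1} for the Poisson side, and a polarization--depolarization of the whole double $D=P\oplus P^*$. The only substantive new ingredient you need is the invariance equivalence for a symmetric form, $\mathcal{B}(x\star y,z)=\mathcal{B}(x,y\star z)$ iff $\mathcal{B}$ is invariant for both $\circ$ and $[\;,\;]$, and your argument for the converse direction (symmetrize/antisymmetrize in $x,y$ and use $\mathcal{B}(y,x\star z)=\mathcal{B}(x\star z,y)=\mathcal{B}(x,z\star y)$) is sound; the identification of the induced structure on $P^*$ with $(\Delta^*,\delta^*)$ is also correct since $(\tau\alpha)^*=\alpha^*\tau$. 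Notably, the paper's own Remark following the proposition concedes that the correspondence ``also can be given equivalently in terms of Manin triples or matched pairs,'' so you have carried out precisely the alternative the authors mention but do not execute. What each approach buys: yours is conceptually cleaner and avoids the long computation, at the cost of outsourcing the real work to the two Manin-triple theorems (one of which lives in \cite{NB1} and is not reproved here) and of being tied to the finite-dimensional duality setup; the paper's computation is self-contained, works at the level of the cocycle identities themselves, and produces explicit conversion formulas between the two sets of compatibility conditions, which is what makes the polarization--depolarization ``in the context of bialgebras'' fully explicit.
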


\begin{proof}
Let $x,y\in P$. Set
\begin{align*}
A(x,y):=&-\delta([x,y])+({\rm ad}(x)\otimes{\rm id}+{\rm id}\otimes{\rm ad}(x))\delta(y)-({\rm ad}(y)\otimes{\rm id}+{\rm id}\otimes{\rm ad}(y))\delta(x),\\
B(x,y):=&-\Delta(x\circ y)+({\rm id}\otimes L_\circ(x))\Delta(y)+(L_\circ(y)\otimes{\rm id} )\Delta(x),\\
C(x,y):=&-\Delta([x,y])+({\rm ad}(x)\otimes {\rm id}+{\rm id}\otimes{\rm ad}(x))\Delta(y)+(L_\circ(y)\otimes {\rm id}-{\rm id}\otimes L_\circ(y))\delta(x),\\
D(x,y):=&-\delta(x\circ y)+(L_\circ(y)\otimes {\rm id})\delta(x)+(L_\circ(x)\otimes {\rm id})\delta(y)+({\rm id}\otimes{\rm ad}(x))\Delta(y)\\
        &+({\rm id}\otimes {\rm ad}(y))\Delta(x),\\
E(x,y):=&-\alpha(x\star y)+(R(y)\otimes {\rm id})\alpha(x)+({\rm id}\otimes L(x))\alpha(y)\nonumber\\
        &+\frac{1}{3}\big((L(y)\otimes {\rm id})\alpha(x)-({\rm id}\otimes L(y))\alpha(x)+(L(x)\otimes {\rm id})\alpha(y)-(R(x)\otimes {\rm id})\alpha(y)\nonumber \\
        &+\tau(-\alpha(x\star y)+(L(x)\otimes {\rm id})\alpha(y)+(L(y)\otimes {\rm id})\alpha(x))\big),\\
F(x,y):=&-\alpha(x\star y)+(R(y)\otimes {\rm id})\alpha(x)+({\rm id}\otimes L(x))\alpha(y)\nonumber \\
        &+\frac{1}{3}\big(-\alpha(y\star x)+(L(x)\otimes {\rm id})\alpha(y)+(L(y)\otimes {\rm id})\alpha(x) \nonumber \\
        &+\tau((L(x)\otimes {\rm id})\alpha(y)+(L(y)\otimes {\rm id})\alpha(x)-(R(y)\otimes {\rm id})\alpha(x)-({\rm id}\otimes L(x))\alpha(y))\big),\\
G(x,y):=&(L(y)\otimes {\rm id})\alpha(x)-({\rm id}\otimes R(y))\alpha(x)+\tau\left((L(x)\otimes {\rm id})\alpha(y)-({\rm id}\otimes R(x))\alpha(y)\right)\nonumber \\
        &+\frac{1}{3}\big((R(y)\otimes {\rm id})\alpha(x)-({\rm id}\otimes L(y))\alpha(x)+({\rm id}\otimes L(x))\alpha(y)-(R(x)\otimes {\rm id})\alpha(y)\nonumber \\
        &+\tau(\alpha(y\star x)-\alpha(x\star y))\big).
\end{align*}
Note that $(P,[\;,\;],\circ,\delta,\Delta)$ is a Poisson bialgebra if and only if $(P^*,\delta^*,\Delta^*)$ is a Poisson algebra and
\begin{equation}\label{eq:PB-sPB1}
A(x,y)=B(x,y)=C(x,y)=D(x,y)=0
\end{equation}
and $(P,\c,\alpha)$ is an adm-Poisson bialgebra if and only if
$(P^*,\alpha^*)$ is an adm-Poisson algebra and
\begin{equation}\label{eq:PB-sPB2}
E(x,y)=F(x,y)=G(x,y)=0.
\end{equation}

It is straightforward to show that $(P^*,\delta^*,\Delta^*)$ is a Poisson algebra if and only if $(P^*,\alpha^*)$ is an adm-Poisson algebra.

By a straightforward calculation, we have
\begin{align*}
A(x,y)=&-\frac{1}{4}E(x,y)+\frac{1}{4}E(y,x)+\frac{3}{4}F(x,y)-\frac{3}{4}F(y,x)-\frac{1}{4}G(x,y)+\frac{1}{4}G(y,x);\\
B(x,y)=&\frac{7}{8}E(x,y)+\frac{5}{8}E(y,x)-\frac{3}{8}F(x,y)-\frac{3}{8}F(y,x)-\frac{3}{8}G(x,y)-\frac{1}{4}G(y,x)+\frac{3}{8}\tau G(y,x);\\
C(x,y)=&\frac{1}{4}G(x,y)-\frac{1}{2}G(y,x)+\frac{3}{4}\tau G(x,y);\\
D(x,y)=&-\frac{3}{4}E(x,y)-\frac{3}{4}E(y,x)+\frac{3}{4}F(x,y)+\frac{3}{4}F(y,x)+\frac{1}{4}G(x,y)+\frac{1}{4}G(y,x);\\
E(x,y)=&\frac{2}{3}A(x,y)+\frac{4}{3}B(x,y)+\frac{1}{3}C(x,y)-C(y,x)+D(x,y)+\frac{1}{3}\tau D(x,y);\\
F(x,y)=&\frac{2}{3}A(x,y)+\frac{4}{3}B(x,y)-\frac{2}{3}C(y,x)+\frac{4}{3}D(x,y);\\
G(x,y)=&-\frac{2}{3}A(x,y)+\frac{4}{3}B(x,y)-\frac{4}{3}B(y,x)+\frac{4}{3}C(x,y)+\frac{2}{3}C(y,x).
\end{align*}
Therefore Eq.~(\ref{eq:PB-sPB1}) holds if and only if
Eq.~(\ref{eq:PB-sPB2}) holds.  Hence the conclusion follows.
\end{proof}

\begin{rmk} On the one hand, the above proof generalizes and
illustrates explicitly the polari-zation-depolarization process (\cite{LiLo, MR06,OPV}) in the sense of bialgebras for
Poisson bialgebras with one cocommutative comultiplication $\Delta$ and one anti-cocommutative comultiplication $\delta$ and adm-Poisson bialgebras with one
comultiplication $\alpha$. On the other hand, the above correspondence between adm-Poisson bialgebras and
Poisson bialgebras also can be given equivalently in terms of
Manin triples or matched pairs (\cite{NB1}).
\end{rmk}
%\liu{Form the view point of Manin triple, the proof of the equivalence is more easier. }

\section{A special class of adm-Poisson bialgebras}

In this section, we consider a special class of adm-Poisson
bialgebras, that is, an adm-Poisson bialgebra $(P,\c,\alpha)$ with
$\alpha$ in the form
    \begin{equation}\label{eq:cobo}
    \alpha(x)=({\rm id}\otimes L(x)-R(x)\otimes {\rm id})r,\;\; \forall x\in P.
    \end{equation}
%We sometimes denote it by $(P,\c,r)$.
Such adm-Poisson bialgebras are quite similar
as the ``coboundary Lie bialgebras" for Lie algebras
(\cite{CP1,Dr}) or the ``coboundary infinitesimal bialgebras" for
associative algebras (\cite{Aguiar1,Bai2}), although to our knowledge, a well-constructed
cohomology theory for adm-Poisson algebras is not known yet.

%\liu{By Eq.~(\ref{eq:eqv1})-Eq.~(\ref{eq:eqv3}) below, we can't
%say that the Eq.~(\ref{eq:cobo}) is coboundary because
%Eq.~(\ref{eq:cobo}) does not keep any one of the
%Eq.~(\ref{Defbi1})-Eq.~(\ref{Defbi3}) holding naturally. Thus I
%think that there is no a cohomology explanation for adm-Poisson
%bialgebras.}

\begin{pro}\label{pro:co1}
Let $(P,\c)$ be an adm-Poisson algebra and $r\in P\otimes P$. Let
$\alpha:P\rightarrow P\otimes P$ be a linear map defined by
Eq.~(\ref{eq:cobo}). Then for all $x,y\in P$, we have
\begin{enumerate}
\item $\alpha$ satisfies Eq.~(\ref{Defbi1}) if and only if
\begin{eqnarray}
&&({\rm id}\otimes L(x))(L(y)\otimes {\rm id}-{\rm id}\otimes
R(y))(r+\tau(r))\nonumber\\ \nonumber
  &&+({\rm id}\otimes L(y))(L(x)\otimes {\rm
  id}-{\rm id}\otimes R(x))(r+\tau(r))\\
  &&-(L(x\c y)\otimes {\rm id}-{\rm id}\otimes R(x\c
  y))(r+\tau(r))=0;\label{eq:eqv1}
\end{eqnarray}
\item $\alpha$ satisfies Eq.~(\ref{Defbi2}) if and only if
\begin{eqnarray}
&&({\rm id}\otimes L(x))(L(y)\otimes {\rm id}-{\rm id}\otimes
R(y))(r+\tau(r))\nonumber\\\nonumber
  &&+({\rm id}\otimes L(y))(L(x)\otimes {\rm
  id}-{\rm id}\otimes R(x))(r+\tau(r))\\\nonumber
  &&-(L(x)\otimes {\rm id})(L(y)\otimes {\rm id}-{\rm id}\otimes
  R(y))(r+\tau(r))\\
  &&-({\rm id}\otimes R(y))(L(x)\otimes {\rm id}-{\rm id}\otimes
  R(x))(r+\tau(r))=0;\label{eq:eqv2}
\end{eqnarray}
\item $\alpha$ satisfies Eq.~(\ref{Defbi3}) if and only if
\begin{eqnarray}
&&(R(x)\otimes {\rm id}-{\rm id}\otimes L(x))(L(y)\otimes {\rm
id}-{\rm id}\otimes R(y))(r+\tau(r))\nonumber\\
  &&+\frac{1}{3}\big((L(x\c y)-L(y\c x))\otimes {\rm id}-{\rm id}\otimes(R(x\c y)-R(y\c
  x))\big)(r+\tau(r))=0.\label{eq:eqv3}
\end{eqnarray}
\end{enumerate}
\end{pro}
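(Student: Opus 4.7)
The plan is to prove each of the three equivalences by direct substitution of the coboundary expression $\alpha(x)=(\mathrm{id}\otimes L(x)-R(x)\otimes \mathrm{id})r$ into the corresponding condition from Theorem \ref{thm:corr}, and then to collect the resulting tensors using the representation axioms for $(L,R,P)$ (Eqs.~(\ref{c2})--(\ref{c4})) together with the derived identity $L(x\star y)+R(x)R(y)=L(x)L(y)+R(y\star x)$ from Lemma \ref{lem:rep-property1}.

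First I would expand the basic building blocks. For any $x,y\in P$, each of $\alpha(x\star y)$, $(R(y)\otimes\mathrm{id})\alpha(x)$, $(\mathrm{id}\otimes L(x))\alpha(y)$, $(L(x)\otimes\mathrm{id})\alpha(y)$, $(\mathrm{id}\otimes L(y))\alpha(x)$, $(R(x)\otimes\mathrm{id})\alpha(y)$, and the $\tau$-twisted analogues splits into exactly two tensor terms of the form $(A\otimes B)r$. The $\tau$-twisted combinations in (Defbi1)--(Defbi3) act as $\tau\alpha(x)=(L(x)\otimes\mathrm{id}-\mathrm{id}\otimes R(x))\tau(r)$, which is the mechanism by which $r$ and $\tau(r)$ get coupled. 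Substituting all of these into Eq.~(\ref{Defbi1}) produces a sum of tensors of the form $(A\otimes B)r$ whose factors are compositions of operators drawn from $\{L(x),L(y),R(x),R(y),L(x\star y),R(x\star y)\}$.

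For part (1) I would then use (c2) to rewrite $L(x\star y)$ in terms of $L(x)L(y)$, $L(y)L(x)$ and $R(x\star y)$, and (c4) for the dual replacement of $R(x\star y)$. Most of the composition monomials then cancel, and the surviving ones organize themselves into $\tau$-matched pairs $(A\otimes B)r+(A\otimes B)\tau(r)=(A\otimes B)(r+\tau(r))$. Factoring out the common operator on the left reproduces exactly the left-hand side of Eq.~(\ref{eq:eqv1}). Parts (2) and (3) proceed identically, but invoke different combinations of the representation axioms: part (2) uses (c2) together with (c3) (which governs $L(x)R(y)$ versus $R(y)L(x)$), while part (3) uses (c3) together with the skew combinations $L(x\star y)-L(y\star x)$ and $R(x\star y)-R(y\star x)$ obtained by averaging (c3) with its $x\leftrightarrow y$ swap; this is where the coefficient $\tfrac13$ and the precise skew combination appearing in Eq.~(\ref{eq:eqv3}) come from.

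The main obstacle is not conceptual but organizational: each of the three equations in Theorem \ref{thm:corr} contains roughly ten terms which, after substitution, produce on the order of forty tensor monomials, and the content of the proposition is that these collapse to the compact expressions (\ref{eq:eqv1})--(\ref{eq:eqv3}) involving only $r+\tau(r)$. The structural observation that drives the entire argument is that only the symmetric part of $r$ can obstruct $\alpha$ from satisfying (\ref{Defbi1})--(\ref{Defbi3}); equivalently, a skew-symmetric $r$ automatically fulfills all three conditions, which is exactly the feature that will be exploited in the subsequent sections to produce adm-Poisson bialgebras from solutions of the adm-Poisson Yang--Baxter equation.
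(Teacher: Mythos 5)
Your proposal follows essentially the same route as the paper's proof: substitute the coboundary formula for $\alpha$ into Eqs.~(\ref{Defbi1})--(\ref{Defbi3}), use the representation axioms (\ref{c2})--(\ref{c4}) together with Lemma~\ref{lem:rep-property1} to kill the blocks acting purely on $r$, and pair the surviving terms with their $\tau$-twisted partners so that everything collapses onto $r+\tau(r)$ (this is exactly the paper's decomposition into the pieces $(A1)$--$(A4)$, with $(A1)=(A2)=0$ by the representation axioms and $(A3)+(A4)$ giving the residual expression in $r+\tau(r)$). Your closing observation that a skew-symmetric $r$ automatically satisfies all three conditions is also consistent with the paper's Corollary~\ref{pro:coboundary adm-Poisson}.
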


\begin{proof} We only give an explicit proof for (a) as an example and omit the proof for (b) and (c)  since it is
similar. Let $x,y\in P$. Substituting Eq. $(\ref{eq:cobo})$ into
the Eq. (\ref{Defbi1}), we have
\begin{align*}
0=&-({\rm id}\otimes L(x\c y))r+(R(x\c y)\otimes {\rm id})r+(R(y)\otimes L(x))r-(R(y)R(x)\otimes {\rm id})r\\
&+({\rm id}\otimes L(x)L(y))r-(R(y)\otimes L(x))r\\
&+\frac{1}{3}\big((L(y)\otimes L(x))r-(L(y)R(x)\otimes{\rm id})r-({\rm id}\otimes L(y)L(x))r+(R(x)\otimes L(y))r  \\
&+(L(x)\otimes L(y))r-(L(x)R(y)\otimes {\rm id})r-(R(x)\otimes L(y))r+(R(x)R(y)\otimes {\rm id})r\\
&-(L(x\c y)\otimes {\rm id})\tau(r)+({\rm id}\otimes R(x\c y))\tau(r)+(L(y)\otimes L(x))\tau(r)\\
&-({\rm id}\otimes L(x)R(y))\tau(r)+(L(x)\otimes L(y))\tau(r)-({\rm id}\otimes L(y)R(x))\tau(r)\big)\\
=&\ (A1)+(A2)+(A3)+(A4),
\end{align*}
where
\begin{align*}
(A1)=&\big((-R(y)R(x)+R(x\star y)+\frac{1}{3}(-L(y)R(x)-L(x)R(y)+L(x)L(y)+R(y\star x)))\otimes {\rm id}\big)r,\\
(A2)=&\big({\rm id}\otimes (-L(x\star y)+L(x)L(y)+\frac{1}{3}(-L(y)L(x)+L(y)R(x)+L(x)R(y)-R(x\c y)))\big)r,\\
(A3)=&\frac{1}{3}\big(-L(x)L(y)\otimes {\rm id}-R(y\star x)\otimes {\rm id}+R(x)R(y)\otimes {\rm id}\big)r,\\
(A4)=&\frac{1}{3}\big(-(L(x\c y)\otimes {\rm id})\tau(r)+({\rm id}\otimes R(x\c y))\tau(r)+({\rm id}\otimes R(x\c y))r\\
     &+({\rm id}\otimes L(x))(L(y)\otimes {\rm id}-{\rm id}\otimes R(y))(r+\tau(r))\\
     &+({\rm id}\otimes L(y))(L(x)\otimes {\rm id}-{\rm id}\otimes R(x))(r+\tau(r))\big).
\end{align*}
Since $(L,R,P)$ is a representation of the adm-Poisson algebra
$(P,\c)$, we get $$(A1)=(A2)=0,\;\;(A3)=-\frac{1}{3}(L(x\c
y)\otimes {\rm id})r.$$ Thus we have
\begin{eqnarray*}
(A3)+(A4)&=&\frac{1}{3}\big(({\rm id}\otimes L(x))(L(y)\otimes {\rm id}-{\rm id}\otimes R(y))(r+\tau(r))\\
  &&+({\rm id}\otimes L(y))(L(x)\otimes {\rm id}-{\rm id}\otimes R(x))(r+\tau(r))\\
  &&-(L(x\c y)\otimes {\rm id}-{\rm id}\otimes R(x\c y))(r+\tau(r))\big)=0.
\end{eqnarray*}
Therefore Eq. $(\ref{Defbi1})$ holds if and only if Eq.~(\ref{eq:eqv1})
holds.
\delete
{(b) Substituting Eq. $(\ref{eq:cobo})$ into the
Eq. (\ref{Defbi2}), we have
\begin{align*}
0=&-(R(x\c y)\otimes {\rm id})r+({\rm id}\otimes L(x\c y))r+(R(y)R(x)\otimes {\rm id})r-(R(y)\otimes L(x))r\\
&+(R(y)\otimes L(x))r-({\rm id}\otimes L(x)L(y))r\\
&+\frac{1}{3}\big((L(y)R(x)\otimes{\rm id})r-(L(y)\otimes L(x))r-(R(y\c x)\otimes {\rm id})r+({\rm id}\otimes L(y\c x))r  \\
&+(L(x)R(y)\otimes {\rm id})r-(L(x)\otimes L(y))r+(L(x)L(y)\otimes {\rm id})\tau(r)-(L(x)\otimes R(y))\tau(r)\\
&-({\rm id}\otimes R(y)R(x))\tau(r)+(L(x)\otimes R(y))\tau(r)+({\rm id}\otimes L(x)R(y))\tau(r)\\
&-(L(y)\otimes L(x))\tau(r)+({\rm id}\otimes L(y)R(x))\tau(r)-(L(x)\otimes L(y))\tau(r)\big)\\
=&\ (F1)+(F2)+(F3)+(F4),
\end{align*}
where
\begin{align*}
(F1)=&\big((R(y)R(x)-R(x\star y)+\frac{1}{3}(L(y)R(x)+L(x)R(y)-L(x)L(y)-R(y\star x)))\otimes {\rm id}\big)r,\\
(F2)=&\big({\rm id}\otimes (L(x\star y)-L(x)L(y)+\frac{1}{3}(L(y)L(x)-L(y)R(x)-L(x)R(y)+R(x\c y)))\big)r,\\
(F3)=&\frac{1}{3}\big(-{\rm id}\otimes L(y)L(x)-{\rm id}\otimes R(x\star y)+{\rm id}\otimes L(y\c x)\big)r,\\
(F4)=&\frac{1}{3}\big(-({\rm id}\otimes R(y)R(x))\tau(r)+(L(x)\otimes R(y))(r+\tau(r))\\
     &+({\rm id}\otimes L(x))({\rm id}\otimes R(y)-L(y)\otimes {\rm id})(r+\tau(r))\\
     &+({\rm id}\otimes L(y))({\rm id}\otimes R(x)-L(x)\otimes {\rm id})(r+\tau(r))\\
     &+(L(x)\otimes{\rm id})(L(y)\otimes{\rm id}-{\rm id}\otimes R(y))(r+\tau(r))\big).
\end{align*}
Since $(L,R,P)$ is a representation of the adm-Poisson algebra
$(P,\c)$, we get $$(F1)=(F2)=0,\quad (F3)=-\frac{1}{3}({\rm
id}\otimes R(y)R(x))r.$$ Thus, one has
\begin{align*}
(F3)+(F4)=&\frac{1}{3}\big(({\rm id}\otimes R(y))(L(x)\otimes {\rm id}-{\rm id}\otimes R(x))(r+\tau(r))\\
  &+({\rm id}\otimes L(y))({\rm id}\otimes R(x)-L(x)\otimes {\rm id})(r+\tau(r))\\
  &+(L(x\c y)\otimes {\rm id}-{\rm id}\otimes R(x\c y)(r+\tau(r))\\
  &+(L(x)\otimes{\rm id})(L(y)\otimes{\rm id}-{\rm id}\otimes R(y))(r+\tau(r))\big)=0.
\end{align*}
This implies that Eq. $(\ref{Defbi2})$ holds if and only if Eq.~(\ref{eq:eqv2})
holds.

(c) Substituting Eq. $(\ref{eq:cobo})$ into
Eq. (\ref{Defbi3}), we have
\begin{align*}
0=&-(R(x\c y)\otimes {\rm id})r+({\rm id}\otimes L(x\c y))r+(R(y)R(x)\otimes {\rm id})r-(R(y)\otimes L(x))r\\
&+(R(y)\otimes L(x))r-({\rm id}\otimes L(x)L(y))r\\
&+\frac{1}{3}\big((L(y)R(x)\otimes{\rm id})r-(L(y)\otimes L(x))r-(R(y\c x)\otimes {\rm id})r+({\rm id}\otimes L(y\c x))r  \\
&+(L(x)R(y)\otimes {\rm id})r-(L(x)\otimes L(y))r+(L(x)L(y)\otimes {\rm id})\tau(r)-(L(x)\otimes R(y))\tau(r)\\
&-({\rm id}\otimes R(y)R(x))\tau(r)+(L(x)\otimes R(y))\tau(r)+({\rm id}\otimes L(x)R(y))\tau(r)\\
&-(L(y)\otimes L(x))\tau(r)+({\rm id}\otimes L(y)R(x))\tau(r)-(L(x)\otimes L(y))\tau(r)\big)\\
=&\ (G1)+(G2)+(G3)+(G4)+(G5),
\end{align*}
where
\begin{align*}
(G1)=&\big((L(y)R(x)-R(y)L(x)+\frac{1}{3}(L(y)L(x)-L(x)L(y)-R(y\star x)+R(x\star y)))\otimes {\rm id}\big)r,\\
(G2)=&\big({\rm id}\otimes(R(y)L(x)-L(x)R(y)+\frac{1}{3}(L(y)L(x)-L(x)L(y)-R(y\c x)+R(x\c y)))\big)r,\\
(G3)=&\frac{1}{3}\big((-R(x)R(y)-L(x\c y)+L(x)L(y)+R(y\star x))\otimes {\rm id}\big)r,\\
(G4)=&\frac{1}{3}\big((R(y)R(x)+L(y\c x)-L(y)L(x)-R(x\star y))\otimes {\rm id}\big)r,\\
(G5)=&(R(x)\otimes {\rm id}-{\rm id}\otimes L(x))(L(y)\otimes {\rm id}-{\rm id}\otimes R(y))(r+\tau(r))\\
     &+\frac{1}{3}\big((L(x\c y)-L(y\c x))\otimes {\rm id}-{\rm id}\otimes(R(x\c y)-R(y\c x))\big)(r+\tau(r)).
\end{align*}
Since $(L,R,P)$ is a representation of the adm-Poisson algebra, we
have
$$(G1)=(G2)=0.$$
Furthermore, by Eq. \eqref{lem:rep1}, we have
$$L(x\c y)+R(x)R(y)=L(x)L(y)+R(y\c x),\quad \forall x,y\in P,$$
which implies that
$$(G3)=(G4)=0.$$
Therefore, Eq. $(\ref{Defbi3})$ holds if and only if Eq.~(\ref{eq:eqv3})
holds.}
\end{proof}

\begin{pro}\label{pro:cosp}
Let $(P,\c)$ be an adm-Poisson algebra and $r=\sum a_i\otimes
b_i\in P\otimes P$. Let $\alpha:P\rightarrow P\otimes P$ be a
linear map defined by Eq.~(\ref{eq:cobo}). Then %the dual map
%$\alpha^* : P^*\otimes P^* \rightarrow P^*$ defines an adm-Poisson
%algebra structure on $P^*$
$\alpha$ satisfies Eq.~(\ref{eq:coalgebra}) if and only if $r$
satisfies
\begin{eqnarray}\label{eq:cosp}
&&(R(x)\otimes {\rm id}\otimes{\rm id}-{\rm id}\otimes{\rm
id}\otimes L(x)){\bf P}(r)+\frac{1}{3}\Big(({\rm id}\otimes R(x)\otimes {\rm
id}-{\rm id}\otimes{\rm id}\otimes
R(x)){\bf P}(r)\nonumber\\
  &&+\left(R(x)\otimes{\rm id}\otimes{\rm id}-{\rm id}\otimes R(x)\otimes{\rm id}\right){\bf Q}(r)\nonumber\\
  &&-\sum_i(R(x)\otimes {\rm id}\otimes {\rm id})\big(a_i\otimes((L(b_i)\otimes{\rm id}-{\rm id}\otimes R(b_i))(r+\tau(r)))\big)\nonumber\\
  &&-\sum_i(({\rm id}\otimes R(x)\otimes{\rm id})(\tau \otimes{\rm id}))\big(a_i\otimes((L(b_i)\otimes{\rm id}-{\rm id}\otimes R(b_i))(r+\tau(r)))\big)\nonumber\\
  &&+\sum_i({\rm id}\otimes \tau+{\rm id})\big((L(a_i)\otimes {\rm id})(L(x)\otimes {\rm id}-{\rm id}\otimes R(x))(r+\tau(r))\otimes b_i\big)\nonumber\\
  &&+\sum_i(\tau \otimes {\rm id}+{\rm id})\big(a_i\otimes((L(x\c b_i)\otimes {\rm id}-{\rm id}\otimes R(x\c b_i))(r+\tau(r)))\big)\nonumber\\
  &&-\sum_ia_i\otimes \big((R(b_i)\otimes {\rm id})(L(x)\otimes {\rm id}-{\rm id}\otimes R(x))(r+\tau(r))\big)\nonumber\\
  &&-\sum_i\big((R(a_i)\otimes {\rm id})(L(x)\otimes {\rm id}-{\rm id}\otimes R(x))(r+\tau(r))\big)\otimes
  b_i\Big)=0,
\end{eqnarray}
for all $x\in P$, where
\begin{eqnarray}
{\bf P}(r)=r_{23}\c r_{12}-r_{13}\c r_{23}-r_{12}\c r_{13},
\;\;{\bf Q}(r)=r_{12}\c r_{23}-r_{23}\c r_{13}-r_{13}\c r_{12}.
\end{eqnarray}

\end{pro}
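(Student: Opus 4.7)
The plan is to substitute the formula $\alpha(x)=({\rm id}\otimes L(x)-R(x)\otimes{\rm id})r$ directly into each of the six terms of Eq.~(\ref{eq:coalgebra}) and expand. Writing $r=\sum_i a_i\otimes b_i$ and computing iteratively, I first obtain
$$({\rm id}\otimes\alpha)\alpha(x)=\sum_i a_i\otimes\alpha(x\c b_i)-\sum_i(a_i\c x)\otimes\alpha(b_i),$$
$$(\alpha\otimes{\rm id})\alpha(x)=\sum_i\alpha(a_i)\otimes(x\c b_i)-\sum_i\alpha(a_i\c x)\otimes b_i,$$
and analogous formulas for the four $\tau$-twisted terms. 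Each inner application of $\alpha$ then expands using the definition, producing expressions in which the operators $L(x),R(x),L(a_i),R(a_i),L(b_i),R(b_i)$ act on the three tensor slots of an $r\otimes r$ configuration, together with composite operators of the form $L(x\c b_i)$ and $R(a_i\c x)$.

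The second step is to rewrite all composite operators involving $x\c b_i$ or $a_i\c x$ using the representation identities Eqs.~\eqref{c2}--\eqref{c4} applied to $(L,R,P)$, together with Eq.~\eqref{eq:rep-property1}, namely $L(x\c y)+R(x)R(y)=L(x)L(y)+R(y\c x)$. After these substitutions, every term either (i) contains no $x$ at all, in which case it assembles into one of the products $r_{12}\c r_{13},\ r_{13}\c r_{23},\ r_{23}\c r_{12}$, $r_{12}\c r_{23},\ r_{23}\c r_{13},\ r_{13}\c r_{12}$ (organized into ${\bf P}(r)$ and ${\bf Q}(r)$ by the sign pattern), or (ii) contains exactly one factor of $L(x)$ or $R(x)$. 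Terms of type (ii) pair up: the expansion of $\alpha(x\c b_i)$ naturally produces $L(x)$ acting on one slot with $r$ in the remaining two slots, and the $\tau$-twisted companion term contributes the same configuration with $\tau(r)$ in place of $r$. These pairs then combine into the factors $(L(x)\otimes{\rm id}-{\rm id}\otimes R(x))(r+\tau(r))$ and their permutations appearing in Eq.~(\ref{eq:cosp}).

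The factor of $\frac{1}{3}$ is preserved throughout, since the $x$-independent ${\bf P}(r)$ contribution arises from the first two untwisted terms of Eq.~(\ref{eq:coalgebra}), while ${\bf Q}(r)$ and all the $r+\tau(r)$-correction terms arise from the four $\tau$-twisted terms that already carry the prefactor $\frac{1}{3}$. The main obstacle is purely the bookkeeping: six outer terms, each expanding into roughly a dozen summands after two rounds of applying $\alpha$, then further expanded via the representation identities. The organizational key is to index every summand by (a) which tensor slots come from the first versus the second copy of $r$ and (b) whether $x$ appears via $L$, $R$, or not at all, and to verify at the end that the $x$-dependent remainder collapses into the symmetric combinations $r+\tau(r)$ rather than simply $r$. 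This symmetrization is exactly what forces the appearance of the eight distinct correction terms listed in Eq.~(\ref{eq:cosp}), and it is the step where the representation axioms \eqref{c2}--\eqref{c4} are essential, since they are what allow $L(x\c b_i)$ and $R(a_i\c x)$ to be replaced by expressions symmetric under $r\leftrightarrow\tau(r)$.
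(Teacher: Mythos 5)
Your overall strategy coincides with the paper's: substitute Eq.~\eqref{eq:cobo} into the six terms of Eq.~\eqref{eq:coalgebra}, expand the iterated $\alpha$'s, use the adm-Poisson identity (equivalently Eqs.~\eqref{c2}--\eqref{c4} for the adjoint representation $(L,R,P)$ together with Eq.~\eqref{eq:adm-Poisson property1}) to reorganize, and collect the result into ${\bf P}(r)$, ${\bf Q}(r)$ and symmetrized correction terms; your formulas for $({\rm id}\otimes\alpha)\alpha(x)$ and $(\alpha\otimes{\rm id})\alpha(x)$ are correct. However, your roadmap for the collection step --- which is the entire content of the proof --- is wrong in two concrete respects. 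First, the terms $\frac{1}{3}({\rm id}\otimes R(x)\otimes{\rm id}-{\rm id}\otimes{\rm id}\otimes R(x)){\bf P}(r)$ and $\frac{1}{3}(R(x)\otimes{\rm id}\otimes{\rm id}-{\rm id}\otimes R(x)\otimes{\rm id}){\bf Q}(r)$ do \emph{not} arise from the four $\tau$-twisted terms of Eq.~\eqref{eq:coalgebra}: they arise from the two untwisted terms. The coefficient $\frac{1}{3}$ there is generated by applying Eq.~\eqref{c1} and Eq.~\eqref{eq:adm-Poisson property1} to the associator-type remainder $\sum_{i,j}a_i\otimes a_j\otimes\big((x\star b_i)\star b_j-x\star(b_i\star b_j)\big)+\cdots$ left after extracting $(R(x)\otimes{\rm id}\otimes{\rm id}-{\rm id}\otimes{\rm id}\otimes L(x)){\bf P}(r)$; it is not the $\frac{1}{3}$ already prefacing the twisted terms. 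The twisted terms, expanded directly, instead combine with the rest of this untwisted remainder to produce the six correction sums.

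Second, your classification of the monomials is untenable: every monomial in the expansion contains $x$, so your class (i) is empty, and your assertion that all composite operators $L(x\star b_i)$, $R(a_i\star x)$ are eliminated by the representation identities contradicts the target formula itself, which retains $L(x\star b_i)$ and $R(x\star b_i)$ in the correction term $\sum_i(\tau\otimes{\rm id}+{\rm id})\big(a_i\otimes((L(x\star b_i)\otimes{\rm id}-{\rm id}\otimes R(x\star b_i))(r+\tau(r)))\big)$. Because the proposition is a pure identity of tensors, a proof must actually exhibit the matching between the expanded monomials and the terms of Eq.~\eqref{eq:cosp}; as written, your proposal asserts an organization of the terms that is partly false and carries out none of the matching, so it does not establish the statement.
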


\begin{proof}
Let $x\in A$. Then we have
\begin{eqnarray}
\nonumber && ({\rm id}\otimes\alpha)\alpha(x)-(\alpha\otimes{\rm
id})\alpha(x)-(R(x)\otimes {\rm id}\otimes{\rm id}-{\rm id}\otimes{\rm
id}\otimes L(x)){\bf P}(r)\\
\nonumber&=&\sum_{i,j}\big(a_i\otimes a_j\otimes \big((x\star b_i)\star b_j-x\star (b_i\star b_j)\big)+a_i\otimes\big((a_j\star x)\star b_i-a_j\star (x\star b_i)\big)\otimes b_j\\
\nonumber&&+\big((a_j\star a_i)\star x-a_j\star (a_i\star x)\big)\otimes b_j\otimes b_i\big)\\
\nonumber&=&\frac{1}{3}\sum_{i,j}\Big(a_i\otimes a_j\otimes \big(x\star (b_j\star b_i)- b_j\star (x\star b_i) - b_i\star (x\star b_j)+ b_i\star (b_j\star x)\big)\\
\nonumber&&+a_i\otimes\big(a_j\star (b_i \star x)-b_i\star (a_j \star x)-x\star (a_j \star b_i)+x\star (b_i \star a_j)\big)\otimes b_j \\
&&+\big(a_j\star (x\star a_i)-x\star (a_j\star a_i)-a_i\star
(a_j\star x)+a_i\star (x\star a_j)\big)\otimes b_j\otimes b_i
\Big).\label{eq:mainthm1}
\end{eqnarray}
Furthermore, by Eq.~\eqref{eq:adm-Poisson property1}, we have
\begin{eqnarray}
  \nonumber&&a_i\otimes a_j\otimes \big(x\star (b_j\star b_i)+ b_i\star (b_j\star x)\big)
  = a_i\otimes a_j\otimes \big((x\star b_j)\star b_i+ (b_i\star b_j)\star x\big)\\
\label{eq:mainthm2}  &&=a_i\otimes a_j\otimes(x\star b_j)\star b_i+(\id\otimes\id\otimes R(x))(r_{13}\star r_{12}).
\end{eqnarray}
Similarly, we have
{\small \begin{eqnarray}
&&a_i\otimes\big(a_j\star (b_i \star x)+x\star (b_i \star a_j)\big)\otimes b_j=a_i\otimes (x\star b_i) \star a_j \otimes b_j+(\id\otimes R(x)\otimes\id)(r_{23}\star r_{12});\label{eq:mainthm3}\\
&&a_i\otimes\big(b_i\star (a_j \star x)+x\star (a_j \star b_i)\big)\otimes b_j=a_i\otimes (x\star a_j)\star b_i \otimes b_j+(\id\otimes R(x)\otimes\id)(r_{12}\star r_{23});\label{eq:mainthm4}\\
 &&\big(x\star (a_j\star a_i)+a_i\star (a_j\star x)\big)\otimes b_j\otimes b_i=(x\star a_j)\star a_i\otimes b_j\otimes b_i+(R_x\otimes\id\otimes\id)(r_{13}\star r_{12}).\label{eq:mainthm5}
\end{eqnarray}}
Substituting Eqs. \eqref{eq:mainthm2}-\eqref{eq:mainthm5} into Eq. \eqref{eq:mainthm1}, by a direct calculation, we have
\begin{eqnarray*}
  \nonumber && ({\rm id}\otimes\alpha)\alpha(x)-(\alpha\otimes{\rm
id})\alpha(x)-(R(x)\otimes {\rm id}\otimes{\rm id}-{\rm id}\otimes{\rm
id}\otimes L(x)){\bf P}(r)\\
&=&\frac{1}{3}\big(({\rm id}\otimes R(x)\otimes {\rm
id}-{\rm id}\otimes{\rm id}\otimes
R(x)){\bf P}(r)+(R(x)\otimes{\rm id}\otimes{\rm id}-{\rm id}\otimes R(x)\otimes{\rm id}){\bf Q}(r)\big)\\
&&+\frac{1}{3}\sum_{i,j}\big( a_i\otimes a_j\otimes(x\star b_j)\star b_i-a_i\otimes a_j\otimes b_j\star (x\star b_i)-a_i\otimes a_j\otimes b_i\star (x\star b_j)\\
&&+a_i\otimes (x\star b_i) \star a_j \otimes b_j-a_i\otimes (x\star a_j)\star b_i \otimes b_j-(x\star a_j)\star a_i\otimes b_j\otimes b_i\\
&&+a_j \star(x\star a_i)\otimes b_j\otimes b_i+a_i \star(x\star a_j)\otimes b_j\otimes b_i-(\id\otimes R(x)\otimes\id)(r_{23}\star r_{13})\\
&&-(\id\otimes R(x)\otimes\id)(r_{13}\star r_{12})-(\id\otimes \id\otimes R(x))(r_{12}\star r_{23})+(\id\otimes R(x)\otimes\id)(r_{12}\star r_{13})\\
&&+(\id\otimes R(x)\otimes\id)(r_{13}\star r_{23})+(\id\otimes \id\otimes R(x))(r_{23}\star r_{12})+(R_x\otimes\id\otimes\id)(r_{23}\star r_{13})\\
&&-(R_x\otimes\id\otimes\id)(r_{12}\star r_{23})\big).
\end{eqnarray*}
On the other hand, we have
{\small \begin{eqnarray*}
&& ({\rm id}\otimes \tau)({\rm
id}\otimes \alpha)\alpha(x)-(\tau\otimes{\rm id})({\rm
id}\otimes\alpha)\alpha(x)-({\rm id}\otimes\tau)(\tau\otimes{\rm id})({\rm id}\otimes\alpha)\alpha(x)\\
&&+(\tau\otimes{\rm id})({\rm id}\otimes\tau)({\rm
      id}\otimes\alpha)\alpha(x)\\
&=&\sum_{i,j}\Big(a_i\otimes (x\star b_i)\star b_j\otimes a_j-a_i\otimes b_j\otimes a_j\star(x\star b_i)-a_i\star x\otimes b_i\star b_j\otimes a_j+a_i\star x\otimes b_j\otimes a_j\star b_i\\
&&-a_i\otimes a_j\otimes (x\star b_j)\star b_i+a_j\star(x\star b_i) \otimes a_i\otimes b_j+a_j\otimes a_i\star x\otimes b_i\star b_j
-a_j\star b_i\otimes a_i\star x\otimes b_j\\&&-a_j\otimes (x\star b_i)\star b_j\otimes a_i+a_j\star(x\star b_i) \otimes b_j\otimes a_i+a_j\otimes b_i\star b_j\otimes a_i\star x
-a_j\star b_i\otimes b_j\otimes a_i\star x\\&&+(x\star b_i)\star b_j \otimes a_i\otimes a_j-b_j\otimes a_i\otimes a_j\star(x\star b_i)-b_i\star b_j\otimes a_i\star x\otimes a_j+b_j\otimes a_i\star x\otimes a_j\star b_i\Big).
\end{eqnarray*}}
Thus we have
\begin{eqnarray*}
  &&({\rm id}\otimes\alpha)\alpha(x)-(\alpha\otimes{\rm
id})\alpha(x)+\frac{1}{3}\big(({\rm id}\otimes \tau)({\rm
id}\otimes \alpha)\alpha(x)-(\tau\otimes{\rm id})({\rm
id}\otimes\alpha)\alpha(x)\\\nonumber
      &&-({\rm id}\otimes\tau)(\tau\otimes{\rm id})({\rm id}\otimes\alpha)\alpha(x)+(\tau\otimes{\rm id})({\rm id}\otimes\tau)({\rm
      id}\otimes\alpha)\alpha(x)\big)\\
&=&(R(x)\otimes {\rm id}\otimes{\rm id}-{\rm id}\otimes{\rm
id}\otimes L(x)){\bf P}(r)+\frac{1}{3}\Big(({\rm id}\otimes R(x)\otimes {\rm
id}-{\rm id}\otimes{\rm id}\otimes
R(x)){\bf P}(r)\nonumber\\
  &&+\left(R(x)\otimes{\rm id}\otimes{\rm id}-{\rm id}\otimes R(x)\otimes{\rm id}\right){\bf Q}(r)\Big)
  +\frac{1}{3} (-A-B+C+D-E-F),
\end{eqnarray*}
where
{\small\begin{eqnarray*}
 &&A= \sum_{i,j}\big(a_i\star x\otimes b_i\star a_j\otimes b_j+a_i\star x\otimes b_i\star b_j\otimes a_j -a_i\star x\otimes a_j\otimes b_j\star b_i-a_i\star x\otimes b_j\otimes a_j\star b_i\big);\\
 &&B= \sum_{i,j}\big(b_i\star a_j\otimes a_i\star x\otimes b_j
  +b_i\star b_j\otimes a_i\star x\otimes a_j -a_j\otimes a_i\star x\otimes b_j\star b_i-b_j\otimes a_i\star x\otimes a_j\star b_i\big);\\
&& C=\sum_{i,j}\big(a_i\star (x\star a_j)\otimes b_j\otimes b_i+a_i\star(x\star b_j)\otimes a_j\otimes b_i
  -a_i\star a_j\otimes b_j\star x\otimes b_i-a_i\star b_j\otimes a_j\star x \otimes b_i\\
  &&\mbox{}\hspace{1cm} +a_i\star (x\star a_j)\otimes b_i\otimes b_j+a_i\star (x\star b_j)\otimes b_i\otimes a_j-a_i\star a_j\otimes b_i\otimes b_j\star x-a_i\star b_j\otimes b_i\otimes a_j\star x\big);\\
&&D=\sum_{i,j}\big(a_i\otimes (x\star b_i)\star a_j\otimes b_j+a_i\otimes (x\star b_i)\star b_j\otimes a_j-a_i\otimes a_j \otimes b_j\star(x\star b_i)
  -a_i\otimes b_j \otimes a_j\star(x\star b_i)\\
  &&\mbox{}\hspace{1cm}+(x\star b_i)\star a_j\otimes a_i\otimes b_j+(x\star b_i)\star b_j\otimes a_i\otimes a_j
  -a_j\otimes a_i \otimes b_j\star(x\star b_i)-b_j\otimes a_i \otimes a_j\star(x\star b_i)\big);\\
&& E=\sum_{i,j}\big(a_i\otimes (x\star a_i)\star b_i\otimes b_j+a_i\otimes (x\star b_j)\star b_i\otimes a_j-a_i\otimes a_j\star b_i\otimes b_j\star x-a_i\otimes b_j\star b_i\otimes a_j\star x\big);\\
&&F=\sum_{i,j}\big( (x\star a_j)\star a_i\otimes b_j\otimes b_i+(x\star b_j)\star a_i\otimes a_j\otimes b_i-a_j\star a_i\otimes b_j\star x\otimes b_i-b_j\star a_i\otimes a_j\star x\otimes b_i\big).
\end{eqnarray*}}
Furthermore, it is straightforward to check that
\begin{eqnarray*}
 A&=&\sum_i(R(x)\otimes {\rm id}\otimes {\rm id})\big(a_i\otimes((L(b_i)\otimes{\rm id}-{\rm id}\otimes R(b_i))(r+\tau(r)))\big);\\
B&=&\sum_i(({\rm id}\otimes R(x)\otimes{\rm id})(\tau \otimes{\rm id}))\big(a_i\otimes((L(b_i)\otimes{\rm id}-{\rm id}\otimes R(b_i))(r+\tau(r)))\big);\\
C&=&\sum_i({\rm id}\otimes \tau+{\rm id})\big((L(a_i)\otimes {\rm id})(L(x)\otimes {\rm id}-{\rm id}\otimes R(x))(r+\tau(r))\otimes b_i\big);\\
D&=&\sum_i(\tau \otimes {\rm id}+{\rm id})\big(a_i\otimes((L(x\c b_i)\otimes {\rm id}-{\rm id}\otimes R(x\c b_i))(r+\tau(r)))\big);\\
E&=&\sum_ia_i\otimes \big((R(b_i)\otimes {\rm id})(L(x)\otimes {\rm id}-{\rm id}\otimes R(x))(r+\tau(r))\big);\\
F&=&\sum_i\big((R(a_i)\otimes {\rm id})(L(x)\otimes {\rm id}-{\rm
id}\otimes R(x))(r+\tau(r))\big)\otimes
  b_i.
\end{eqnarray*}
By Lemma~\ref{lem:cosp}, the conclusion follows.
\end{proof}

Combining Propositions~\ref{pro:co1} and \ref{pro:cosp}, we have
the following result.

\begin{thm}\label{co2}
Let $(P,\c)$ be an adm-Poisson algebra and $r\in P\otimes P$. Let
$\alpha:P\rightarrow P\otimes P$ be a linear map defined by
Eq.~(\ref{eq:cobo}). Then $(P,\c,\alpha)$ is an adm-Poisson
bialgebra if and only if $r$ satisfies
Eqs.~(\ref{eq:eqv1})-(\ref{eq:cosp}).
\end{thm}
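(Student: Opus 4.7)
The plan is straightforward: this theorem is essentially an assembly of the two preceding propositions together with the definition of an adm-Poisson bialgebra, so the proof amounts to organizing those ingredients rather than introducing new machinery. Concretely, I would unfold the definition and then quote each equivalence.

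First, I would recall Definition \ref{adm-Poisson bialgebras}, which states that $(P,\star,\alpha)$ is an adm-Poisson bialgebra if and only if $\alpha$ simultaneously satisfies Eqs.~(\ref{Defbi1}), (\ref{Defbi2}), (\ref{Defbi3}) (the compatibility with the multiplication on $P$, equivalently the matched pair conditions via Theorem~\ref{thm:corr}) and Eq.~(\ref{eq:coalgebra}) (the condition ensuring $\alpha^*$ defines an adm-Poisson algebra structure on $P^*$, via Lemma~\ref{lem:cosp}). Since $\alpha$ is specified by the coboundary formula $\alpha(x)=({\rm id}\otimes L(x)-R(x)\otimes{\rm id})r$ in Eq.~(\ref{eq:cobo}), each of these four conditions on $\alpha$ can be re-expressed as a tensor identity for $r$.

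Next, for the three matched-pair-type conditions I would invoke Proposition~\ref{pro:co1}, which already establishes the three equivalences
\[
(\ref{Defbi1})\Longleftrightarrow(\ref{eq:eqv1}),\qquad
(\ref{Defbi2})\Longleftrightarrow(\ref{eq:eqv2}),\qquad
(\ref{Defbi3})\Longleftrightarrow(\ref{eq:eqv3}),
\]
for the $\alpha$ given by Eq.~(\ref{eq:cobo}). Then for the coassociativity-type condition I would invoke Proposition~\ref{pro:cosp}, which gives
\[
(\ref{eq:coalgebra})\Longleftrightarrow(\ref{eq:cosp}).
\]
Conjoining these four biconditionals yields exactly the statement of the theorem.

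There is no real analytic obstacle here: the heavy lifting (expanding the coboundary formula, repeatedly using the representation identities \eqref{c2}--\eqref{c4} and the property \eqref{eq:adm-Poisson property1}, and collecting terms into the combinations ${\bf P}(r)$ and ${\bf Q}(r)$) has already been performed in the proofs of Propositions~\ref{pro:co1} and \ref{pro:cosp}. The only thing to be careful about is a bookkeeping point, namely that the four conditions on $\alpha$ appearing in Definition~\ref{adm-Poisson bialgebras} are logically independent, so that the ``if and only if'' requires all four equivalences to be combined without any hidden implications between them; this is immediate because Propositions~\ref{pro:co1} and \ref{pro:cosp} give genuine biconditionals item by item. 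Hence the proof reduces to a one-line concatenation of the cited results.
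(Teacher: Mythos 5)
Your proposal is correct and matches the paper exactly: the paper states Theorem~\ref{co2} with the single remark ``Combining Propositions~\ref{pro:co1} and \ref{pro:cosp}, we have the following result,'' which is precisely your assembly of the four biconditionals against Definition~\ref{adm-Poisson bialgebras}. Nothing further is needed.
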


\begin{cor}\label{pro:coboundary adm-Poisson} Let $(P,\c)$ be
an adm-Poisson algebra and $r\in P\otimes P$. Let
$\alpha:P\rightarrow P\otimes P$ be a linear map defined by
Eq.~(\ref{eq:cobo}). If $r$ satisfies
\begin{equation}\label{eq:con1}
(L(x)\otimes {\rm id} -{\rm id}\otimes R(x))(r+\tau(r))=0,\quad\forall x\in P,
\end{equation}
 then $(P,\c,\alpha)$ is an adm-Poisson
bialgebra if and only if $r$ satisfies
\begin{eqnarray}\label{eq:cosp2}
&&(R(x)\otimes {\rm id}\otimes{\rm id}-{\rm id}\otimes{\rm
id}\otimes L(x)){\bf P}(r)+\frac{1}{3}\big(({\rm id}\otimes R(x)\otimes {\rm
id}-{\rm id}\otimes{\rm id}\otimes
R(x)){\bf P}(r)\nonumber\\
  &&+\left(R(x)\otimes{\rm id}\otimes{\rm id}-{\rm id}\otimes R(x)\otimes{\rm id}\right){\bf Q}(r)\big)=0,\;\;\forall x\in P.
\end{eqnarray}
\end{cor}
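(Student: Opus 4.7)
My plan is to deduce the corollary directly from Theorem~\ref{co2}, which characterizes when $(P,\c,\alpha)$ with $\alpha$ of the form \eqref{eq:cobo} is an adm-Poisson bialgebra via the combined conditions \eqref{eq:eqv1}--\eqref{eq:cosp}. The key observation is that the hypothesis \eqref{eq:con1} says precisely that $(L(x)\otimes{\rm id}-{\rm id}\otimes R(x))(r+\tau(r))=0$ for every $x\in P$, so every occurrence of an expression of the form $(L(z)\otimes{\rm id}-{\rm id}\otimes R(z))(r+\tau(r))$ appearing inside \eqref{eq:eqv1}--\eqref{eq:cosp} will vanish. The strategy is therefore to scan each of the five conditions and collect the surviving terms.

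First, I would observe that conditions \eqref{eq:eqv1}, \eqref{eq:eqv2} and \eqref{eq:eqv3} are in fact linear combinations whose every summand is of the form $T\circ(L(z)\otimes{\rm id}-{\rm id}\otimes R(z))(r+\tau(r))$ for some operator $T$ and some $z\in P$ (for instance $z=y$, $z=x$, $z=x\c y$ in \eqref{eq:eqv1}, and $z=x\c y-y\c x$ in \eqref{eq:eqv3}). Consequently, the hypothesis \eqref{eq:con1} forces each of these summands to vanish and thus \eqref{eq:eqv1}--\eqref{eq:eqv3} are automatically satisfied.

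Next, I would analyze \eqref{eq:cosp} term by term. The first three lines, namely the terms
$(R(x)\otimes{\rm id}\otimes{\rm id}-{\rm id}\otimes{\rm id}\otimes L(x)){\bf P}(r)$,
$\tfrac{1}{3}({\rm id}\otimes R(x)\otimes{\rm id}-{\rm id}\otimes{\rm id}\otimes R(x)){\bf P}(r)$ and
$\tfrac{1}{3}(R(x)\otimes{\rm id}\otimes{\rm id}-{\rm id}\otimes R(x)\otimes{\rm id}){\bf Q}(r)$,
together constitute exactly the left-hand side of \eqref{eq:cosp2}. All the remaining summands in \eqref{eq:cosp} each carry a factor of the form $(L(z)\otimes{\rm id}-{\rm id}\otimes R(z))(r+\tau(r))$ (for $z=b_i$, $z=x$ or $z=x\c b_i$), so by \eqref{eq:con1} they all vanish. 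Hence under the assumption \eqref{eq:con1}, condition \eqref{eq:cosp} reduces precisely to \eqref{eq:cosp2}.

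Putting these two reductions together, Theorem~\ref{co2} gives that $(P,\c,\alpha)$ is an adm-Poisson bialgebra if and only if \eqref{eq:cosp2} holds. The proof is essentially a bookkeeping exercise and I do not anticipate a genuine obstacle; the only thing requiring care is to verify that I have correctly identified every summand in \eqref{eq:cosp} carrying a $(L(z)\otimes{\rm id}-{\rm id}\otimes R(z))(r+\tau(r))$ factor, so that nothing beyond the three principal terms survives.
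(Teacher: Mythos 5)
Your proposal is correct and follows essentially the same route as the paper: the paper's own proof simply notes that under Eq.~(\ref{eq:con1}) the conditions (\ref{eq:eqv1})--(\ref{eq:eqv3}) hold automatically and Eq.~(\ref{eq:cosp}) collapses to Eq.~(\ref{eq:cosp2}), then invokes Theorem~\ref{co2}. Your term-by-term verification that every discarded summand carries a factor $(L(z)\otimes{\rm id}-{\rm id}\otimes R(z))(r+\tau(r))$ with $z=y$, $x$, $x\c y$, $b_i$ or $x\c b_i$ is exactly the bookkeeping the paper leaves implicit.
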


\begin{proof}
  Since $r$ satisfies Eq. \eqref{eq:con1}, Eqs.~(\ref{eq:eqv1})-(\ref{eq:eqv3}) hold naturally and Eq. \eqref{eq:cosp} is equivalent to Eq. \eqref{eq:cosp2}. The conclusion follows immediately.
\end{proof}

\emptycomment{\begin{thm} Let $(P,\c,\alpha_P)$ be an adm-Poisson
bialgebra. Then there is an adm-Poisson bialgebra structure on the
direct sum $P\oplus P^*$ of the underlying vector spaces of $P$
and $P^*$.
\end{thm}

\begin{proof}
It can be obtained from \cite[Theorem 3]{NB1} from the
correspondence between adm-Poisson bialgebras and Poisson
bialgebras or a direct proof as follows. We only give the outline
here. Let $\{e_1,\cdots, e_n\}$ be a basis of $P$ and
$\{e_1*,\cdots,e_n^*\}$ be the dual basis. Let $r=\sum_i
e_i\otimes e_i^* \in P\otimes P^*\subset (P\oplus
P^*)\otimes(P\oplus P^*)$. It is straightforward to check that $r$
satisfies Eqs.~(\ref{eq:eqv1})-(\ref{eq:eqv3}) and (\ref{eq:cosp})
on the adm-Poisson algebra
$P\bowtie^{-R_P^*,-L_P^*}_{-R^*_{P^*},-L_{P^*}^*}P^*$. Thus,
\begin{equation*}
    \alpha(x)=(R(x)\otimes{\rm id}-{\rm id}\otimes L(x))r,\quad \forall x\in P\oplus P^*,
\end{equation*}
defines a  adm-Poisson bialgebra structure on $P\oplus P^*$.
\end{proof}}

\begin{lem}\label{lem:eqiv}
Let $(P,\c)$ be an adm-Poisson algebra. If $r\in P\otimes P$ satisfies Eq.~\eqref{eq:con1}, then
$${\bf P}(r)=r_{23}\c r_{12}-r_{13}\c r_{23}-r_{12}\c r_{13}=0$$
if and only if
$${\bf Q}(r)=r_{12}\c r_{23}-r_{23}\c r_{13}-r_{13}\c r_{12}=0.$$
\end{lem}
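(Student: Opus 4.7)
The plan is to establish the equivalence by exhibiting an explicit linear isomorphism sending ${\bf P}(r)$ to ${\bf Q}(r)$ under the hypothesis. Let $\sigma:=\tau\otimes\id_P\colon P^{\otimes 3}\to P^{\otimes 3}$ be the transposition of the first two tensor factors, which is a linear isomorphism of $P^{\otimes 3}$. I will show that under Eq.~\eqref{eq:con1} one has $\sigma({\bf P}(r))={\bf Q}(r)$; the equivalence ${\bf P}(r)=0\iff{\bf Q}(r)=0$ then follows immediately.

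First I would compute $\sigma$ applied to each of the three summands of ${\bf P}(r)$ using $r=\sum_i a_i\otimes b_i$ and writing $\tilde r:=\tau(r)=\sum_i b_i\otimes a_i$. A direct slot-by-slot check gives
\[
\sigma(r_{23}\c r_{12})=r_{13}\c\tilde r_{12},\qquad
\sigma(r_{13}\c r_{23})=r_{23}\c r_{13},\qquad
\sigma(r_{12}\c r_{13})=\tilde r_{12}\c r_{23}.
\]
Setting $s:=r+\tau(r)$, so that $r_{12}+\tilde r_{12}=s_{12}$, collecting terms and subtracting ${\bf Q}(r)=r_{12}\c r_{23}-r_{23}\c r_{13}-r_{13}\c r_{12}$ yields
\[
\sigma({\bf P}(r))-{\bf Q}(r)=r_{13}\c s_{12}-s_{12}\c r_{23}.
\]

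The heart of the proof is to show that this right-hand side vanishes under Eq.~\eqref{eq:con1}. Writing $s=\sum_k p_k\otimes q_k$, the hypothesis reads $\sum_k(x\c p_k)\otimes q_k=\sum_k p_k\otimes(q_k\c x)$ for every $x\in P$. Specializing to $x=a_i$, tensoring with $b_i$ in the third slot, and summing over $i$ produces
\[
\sum_{i,k}(a_i\c p_k)\otimes q_k\otimes b_i=\sum_{i,k}p_k\otimes(q_k\c a_i)\otimes b_i,
\]
whose left-hand side is exactly $r_{13}\c s_{12}$ and whose right-hand side is exactly $s_{12}\c r_{23}$. Thus $\sigma({\bf P}(r))={\bf Q}(r)$, and since $\sigma$ is a linear isomorphism the lemma follows.

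The main place where care is needed is the identification of the three $\sigma$-images in the first step: swapping the first two slots of, e.g., $r_{23}\c r_{12}$ turns the copy of $r$ originally at positions $(1,2)$ into its flipped version $\tilde r$ (now read in the standard indexing as $\tilde r_{12}$), while the copy originally at $(2,3)$ migrates to positions $(1,3)$ and so becomes $r_{13}$; analogous bookkeeping handles the other two terms. Once these three elementary identifications are verified, the remainder of the argument is essentially a single invocation of the hypothesis and is free of further calculation.
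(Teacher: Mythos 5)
Your proof is correct, and I have checked each step: the three identities $\sigma(r_{23}\c r_{12})=r_{13}\c\tilde r_{12}$, $\sigma(r_{13}\c r_{23})=r_{23}\c r_{13}$, $\sigma(r_{12}\c r_{13})=\tilde r_{12}\c r_{23}$ all hold, the resulting identity $\sigma({\bf P}(r))-{\bf Q}(r)=r_{13}\c s_{12}-s_{12}\c r_{23}$ with $s=r+\tau(r)$ is an exact tensor identity requiring no hypothesis, and the final step is a legitimate application of Eq.~\eqref{eq:con1} (the map $x\mapsto (L(x)\otimes{\rm id}-{\rm id}\otimes R(x))(s)$ is linear and identically zero, so contracting it against the legs of $r$ and appending $b_i$ in the third slot gives zero). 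However, your route is genuinely different from the paper's. The paper dualizes: it splits $r$ into its skew-symmetric and symmetric parts $\fraka,\frks$, evaluates both ${\bf P}(r)$ and ${\bf Q}(r)$ as trilinear functionals on $P^*$, matches $({\bf P}(r))(a^*,b^*,c^*)$ against $({\bf Q}(r))(c^*,b^*,a^*)$ (so the paper's underlying permutation is the $(1\,3)$ swap rather than your $(1\,2)$ swap), observes that the difference consists only of $\fraka$--$\frks$ cross terms, and then kills those using Eq.~\eqref{eq:con1} rewritten as $\langle\frks^\sharp(L^*(x)a^*),b^*\rangle=\langle a^*,\frks^\sharp(R^*(x)b^*)\rangle$. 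Your argument buys economy: it stays entirely in $P^{\otimes 3}$, needs neither the $\fraka/\frks$ decomposition nor dual-space bookkeeping, and isolates the entire dependence on the hypothesis in the single term $r_{13}\c s_{12}-s_{12}\c r_{23}$; the paper's computation, while longer, makes visible exactly which pairings (the mixed $\fraka$--$\frks$ ones) obstruct the equivalence when Eq.~\eqref{eq:con1} fails, which is the form reused in the proof of Proposition~\ref{pro:twoeq}.
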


\begin{proof} Let $\frka$ and $\frks$ be the skew-symmetric and symmetric parts of $r$ respectively, that is,
$$\frka=\frac{1}{2}(r-\sigma(r)),\;\;\frks=\frac{1}{2}(r+\sigma(r)).$$
Then $r=\frka+\frks$. Let $a^*,b^*,c^*\in P^*$. Then we have
 {\small \begin{eqnarray}
 \nonumber&&( r_{23}\c r_{12}-r_{13}\c r_{23}-r_{12}\c r_{13})(a^*,b^*,c^*)\\
 \nonumber&&=-\langle\frka^\sharp(c^*)\star \frka^\sharp(a^*),b^*\rangle-\langle\frka^\sharp(a^*)\star \frka^\sharp(b^*),c^*\rangle-\langle\frka^\sharp(b^*)\star \frka^\sharp(c^*),a^*\rangle+\langle\frks^\sharp(c^*)\star \frks^\sharp(a^*),b^*\rangle\\
 \nonumber&&\mbox{}\hspace{0.25cm}-\langle\frks^\sharp(a^*)\star \frks^\sharp(b^*),c^*\rangle-\langle\frks^\sharp(b^*)\star \frks^\sharp(c^*),a^*\rangle-\langle\frka^\sharp(c^*)\star \frks^\sharp(a^*),b^*\rangle+\langle\frks^\sharp(c^*)\star \frka^\sharp(a^*),b^*\rangle\\
 \label{eq:adm-PYBE1}&&\mbox{}\hspace{0.25cm}-\langle\frka^\sharp(a^*)\star \frks^\sharp(b^*),c^*\rangle-\langle\frks^\sharp(a^*)\star \frka^\sharp(b^*),c^*\rangle+\langle\frka^\sharp(b^*)\star \frks^\sharp(c^*),a^*\rangle+\langle\frks^\sharp(b^*)\star \frka^\sharp(c^*),a^*\rangle;\\
 %\end{eqnarray}
 %On the other hand, we have
 %&\begin{eqnarray}
\nonumber&&( r_{12}\c r_{23}-r_{23}\c r_{13}-r_{13}\c r_{12})(c^*,b^*,a^*)\\
 \nonumber&&=-\langle\frka^\sharp(c^*)\star \frka^\sharp(a^*),b^*\rangle-\langle\frka^\sharp(a^*)\star \frka^\sharp(b^*),c^*\rangle-\langle\frka^\sharp(b^*)\star \frka^\sharp(c^*),a^*\rangle+\langle\frks^\sharp(c^*)\star \frks^\sharp(a^*),b^*\rangle\\
 \nonumber&&-\langle\frks^\sharp(a^*)\star \frks^\sharp(b^*),c^*\rangle-\langle\frks^\sharp(b^*)\star \frks^\sharp(c^*),a^*\rangle+\langle\frka^\sharp(c^*)\star \frks^\sharp(a^*),b^*\rangle-\langle\frks^\sharp(c^*)\star \frka^\sharp(a^*),b^*\rangle\\
 \label{eq:adm-PYBE2}&&+\langle\frka^\sharp(a^*)\star \frks^\sharp(b^*),c^*\rangle+\langle\frks^\sharp(a^*)\star \frka^\sharp(b^*),c^*\rangle-\langle\frka^\sharp(b^*)\star \frks^\sharp(c^*),a^*\rangle-\langle\frks^\sharp(b^*)\star \frka^\sharp(c^*),a^*\rangle.
 \end{eqnarray}}
 Thus we have
 \begin{eqnarray*}
  && ( r_{23}\c r_{12}-r_{13}\c r_{23}-r_{12}\c r_{13})(a^*,b^*,c^*)-( r_{12}\c r_{23}-r_{23}\c r_{13}-r_{13}\c r_{12})(c^*,b^*,a^*)\\
  &=&2\big(-\langle\frka^\sharp(c^*)\star \frks^\sharp(a^*),b^*\rangle+\langle\frks^\sharp(c^*)\star \frka^\sharp(a^*),b^*\rangle-\langle\frka^\sharp(a^*)\star \frks^\sharp(b^*),c^*\rangle\\
 &&-\langle\frks^\sharp(a^*)\star \frka^\sharp(b^*),c^*\rangle+\langle\frka^\sharp(b^*)\star \frks^\sharp(c^*),a^*\rangle+\langle\frks^\sharp(b^*)\star \frka^\sharp(c^*),a^*\rangle\big).
 \end{eqnarray*}
 Furthermore, by Eq.~\eqref{eq:con1}, we have
 $$\langle\frks^\sharp({L}^*(x)a^*),b^*\rangle=\langle a^*,\frks^\sharp({R}^*(x)b^*)\rangle,\quad\forall~x\in P,a^*,b^*\in P^*.$$
 Then we have
 \begin{eqnarray*}
   \langle\frka^\sharp(c^*)\star \frks^\sharp(a^*),b^*\rangle=-\langle \frks^\sharp(a^*), L^*(\frka^\sharp(c^*))b^*\rangle
   =-\langle \frks^\sharp(R^*(\frka^\sharp(c^*))a^*), b^*\rangle=\langle\frks^\sharp(b^*)\star \frka^\sharp(c^*),a^*\rangle.
 \end{eqnarray*}
 Similarly, we have
 \begin{eqnarray*}
  \langle\frks^\sharp(c^*)\star \frka^\sharp(a^*),b^*\rangle=\langle\frka^\sharp(a^*)\star \frks^\sharp(b^*),c^*\rangle;
  \langle\frka^\sharp(b^*)\star \frks^\sharp(c^*),a^*\rangle=\langle\frks^\sharp(a^*)\star \frka^\sharp(b^*),c^*\rangle.
 \end{eqnarray*}
 Thus
 $$( r_{23}\c r_{12}-r_{13}\c r_{23}-r_{12}\c r_{13})(a^*,b^*,c^*)=( r_{12}\c r_{23}-r_{23}\c r_{13}-r_{13}\c r_{12})(c^*,b^*,a^*).$$
 The conclusion follows immediately.
\end{proof}

\begin{cor}
Let $(P,\c)$ be an adm-Poisson algebra and $r\in P\otimes P$.
%Suppose that $r$ is skew-symmetric.
Let $\alpha:P\rightarrow
P\otimes P$ be a linear map defined by Eq.~(\ref{eq:cobo}). Then
$(P,\c,\alpha)$ is an adm-Poisson bialgebra if  $r$ satisfies Eq.~(\ref{eq:con1}) and the following equation
\begin{equation}\label{eq1}
{\bf P}(r)=r_{23}\c r_{12}-r_{13}\c r_{23}-r_{12}\c r_{13}=0.
\end{equation}
In particular, if $r$ is skew-symmetric and $r$ satisfies Eq.~(\ref{eq1}), then $(P,\c,\alpha)$ is an adm-Poisson bialgebra.
\end{cor}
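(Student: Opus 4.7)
The plan is to reduce the statement directly to the two immediately preceding results, Corollary~\ref{pro:coboundary adm-Poisson} and Lemma~\ref{lem:eqiv}, which have already done essentially all the work. Since $r$ is assumed to satisfy Eq.~(\ref{eq:con1}), Corollary~\ref{pro:coboundary adm-Poisson} tells us that $(P,\c,\alpha)$ with $\alpha$ defined by Eq.~(\ref{eq:cobo}) is an adm-Poisson bialgebra if and only if $r$ satisfies Eq.~(\ref{eq:cosp2}). Therefore it suffices to verify that ${\bf P}(r)=0$ (together with Eq.~(\ref{eq:con1})) implies Eq.~(\ref{eq:cosp2}).

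First I would note that Eq.~(\ref{eq:cosp2}) is a linear combination of three blocks: two involve ${\bf P}(r)$ and one involves ${\bf Q}(r)$. The two ${\bf P}(r)$-blocks vanish trivially from the hypothesis ${\bf P}(r)=0$. The nontrivial point is the ${\bf Q}(r)$-block, and this is exactly where Lemma~\ref{lem:eqiv} applies: under the hypothesis Eq.~(\ref{eq:con1}), the vanishing of ${\bf P}(r)$ is equivalent to the vanishing of ${\bf Q}(r)$. Plugging ${\bf Q}(r)=0$ into the remaining summand yields Eq.~(\ref{eq:cosp2}), completing the main assertion.

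For the ``in particular'' part, suppose $r$ is skew-symmetric, so that $r+\tau(r)=0$. Then Eq.~(\ref{eq:con1}) holds automatically, so the hypotheses of the first part of the corollary are satisfied as soon as ${\bf P}(r)=0$, and the preceding paragraph applies verbatim.

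I do not expect any genuine obstacle here; the argument is essentially bookkeeping on top of Corollary~\ref{pro:coboundary adm-Poisson} and Lemma~\ref{lem:eqiv}. The only minor care needed is to write the implications in the correct direction (the corollary provides an ``if and only if'' once Eq.~(\ref{eq:con1}) is assumed, so we really do obtain a sufficient condition from ${\bf P}(r)=0$), and to observe that the skew-symmetric case is a special instance rather than an independent statement.
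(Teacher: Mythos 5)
Your proposal is correct and matches the paper's own proof, which likewise derives the statement as a direct consequence of Corollary~\ref{pro:coboundary adm-Poisson} combined with Lemma~\ref{lem:eqiv}; you have simply spelled out the bookkeeping (the two $\mathbf{P}(r)$-blocks of Eq.~(\ref{eq:cosp2}) vanish by hypothesis, the $\mathbf{Q}(r)$-block vanishes by the lemma, and skew-symmetry of $r$ makes Eq.~(\ref{eq:con1}) automatic) that the paper leaves implicit.
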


\begin{proof}
By Lemma~\ref{lem:eqiv}, it follows as a direct consequence of
Corollary~\ref{pro:coboundary adm-Poisson}.
\end{proof}

\begin{defi}
Let $(P,\c)$ be an adm-Poisson algebra and $r\in P\otimes P$. Eq.~
$(\ref{eq1})$ is called the \textbf{\textup{adm-Poisson
Yang-Baxter equation (adm-PYBE)}} in $(P,\c)$.
\end{defi}

\begin{rmk}
The notion of adm-Poisson  Yang-Baxter equation in an adm-Poisson
algebra is due to the fact that it is an analogue of the classical
Yang-Baxter equation in a Lie algebra (\cite{CP1,Dr}) or the
associative Yang-Baxter equation in an associative algebra
(\cite{Aguiar1,Bai2}).
\end{rmk}

It is an unexpected consequence that both the adm-Poisson
Yang-Baxter equation in an adm-Poisson algebra and the associative
Yang-Baxter equation (\cite{Aguiar1,Bai2}) in an associative
algebra have the same form (\ref{eq1}) (also see
Eq.~(\ref{eq:AYBE})). Hence both these two equations have some
common properties. Next we give two properties of the adm-Poisson
Yang-Baxter equation whose proofs are omitted since the proofs are
the same as in the case of the associative Yang-Baxter equation.

\emptycomment{Let $P$ be a vector space. For all $r\in P\otimes
P$, $r$ can be regarded as a linear map from $P^*$ to $P$ in the
following way:
\begin{equation}\langle  u^*\otimes v^*,r\rangle   =\langle  u^*,r(v^*)\rangle   ,\;\;\forall\; u^*,v^*\in P^*.\end{equation}}

\begin{pro}\label{pro:of}
    Let $(P,\c)$ be an adm-Poisson algebra and $r\in P\otimes P$ be skew-symmetric.
Then $r$ is solution of adm-Poisson Yang-Baxter equation if and
only
    if $r$ satisfies
    \begin{equation}\label{eq:of}
    r^\sharp(a^*)\c r^\sharp(b^*)=r(-R^*(r^\sharp(a^*))b^*-L^*(r^\sharp(b^*))a^*),\;\;\forall a^*,b^*\in P^*.
    \end{equation}
\end{pro}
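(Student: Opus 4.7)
The plan is to evaluate both $\mathbf{P}(r)=0$ and Eq.~\eqref{eq:of} against an arbitrary triple $(a^*,b^*,c^*)\in (P^*)^{\otimes 3}$ and observe that the two resulting scalar identities coincide. Throughout, I would write $r=\sum_i a_i\otimes b_i$ and record the two presentations
\[
r^\sharp(u^*)=\sum_i\langle a_i,u^*\rangle b_i=-\sum_i\langle b_i,u^*\rangle a_i,
\]
the second equality coming from $r=-\tau(r)$, together with its immediate consequence $\langle r^\sharp(u^*),v^*\rangle=-\langle r^\sharp(v^*),u^*\rangle$.

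First I would unfold $\mathbf{P}(r)$: using $r_{12}\c r_{13}=\sum_{i,j}(a_i\c a_j)\otimes b_i\otimes b_j$, $r_{13}\c r_{23}=\sum_{i,j}a_i\otimes a_j\otimes(b_i\c b_j)$, and $r_{23}\c r_{12}=\sum_{i,j}a_i\otimes(a_j\c b_i)\otimes b_j$ from convention~(1), each triple sum collapses under the pairing into a product $r^\sharp(u^*)\c r^\sharp(v^*)$ once the relevant index is folded through $r^\sharp$; the skew-symmetry identity above is what converts, e.g.\ $\sum_i\langle b_i,b^*\rangle a_i$ into $-r^\sharp(b^*)$ and similarly for the other contractions. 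Collecting the signs gives
\[
\langle \mathbf{P}(r),\,a^*\otimes b^*\otimes c^*\rangle=-\langle r^\sharp(c^*)\c r^\sharp(a^*),b^*\rangle-\langle r^\sharp(a^*)\c r^\sharp(b^*),c^*\rangle-\langle r^\sharp(b^*)\c r^\sharp(c^*),a^*\rangle.
\]

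Next I would test Eq.~\eqref{eq:of} against $c^*$. Convention~(5) gives $\langle v,R^*(x)b^*\rangle=-\langle v\c x,b^*\rangle$ and $\langle v,L^*(x)a^*\rangle=-\langle x\c v,a^*\rangle$; together with $\langle r^\sharp(u^*),c^*\rangle=-\langle r^\sharp(c^*),u^*\rangle$, the right-hand side of Eq.~\eqref{eq:of} paired with $c^*$ reduces to $-\langle r^\sharp(c^*)\c r^\sharp(a^*),b^*\rangle-\langle r^\sharp(b^*)\c r^\sharp(c^*),a^*\rangle$. Since the left-hand side pairs with $c^*$ to give $\langle r^\sharp(a^*)\c r^\sharp(b^*),c^*\rangle$, by nondegeneracy of the natural pairing between $P$ and $P^*$, Eq.~\eqref{eq:of} is equivalent to
\[
\langle r^\sharp(a^*)\c r^\sharp(b^*),c^*\rangle+\langle r^\sharp(b^*)\c r^\sharp(c^*),a^*\rangle+\langle r^\sharp(c^*)\c r^\sharp(a^*),b^*\rangle=0
\]
for all $a^*,b^*,c^*\in P^*$. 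This is precisely the negation of the formula for $\langle\mathbf{P}(r),a^*\otimes b^*\otimes c^*\rangle$ obtained above, so the two conditions are equivalent.

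The main obstacle is purely bookkeeping of signs, coming from three independent sources: (i)~the flip $r=-\tau(r)$ used each time an index is folded via $r^\sharp$; (ii)~the minuses in $-R^*,-L^*$ built into convention~(5); and (iii)~the signs inside $\mathbf{P}(r)$ and the two terms on the right of Eq.~\eqref{eq:of}. Once these are consistently tracked, the argument is a direct unwinding that uses no adm-Poisson axioms beyond bilinearity of $\c$, so the equivalence is essentially combinatorial rather than structural.
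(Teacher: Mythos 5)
Your proposal is correct: I checked the sign bookkeeping, and indeed $\langle \mathbf{P}(r),a^*\otimes b^*\otimes c^*\rangle$ evaluates to $-\langle r^\sharp(c^*)\c r^\sharp(a^*),b^*\rangle-\langle r^\sharp(a^*)\c r^\sharp(b^*),c^*\rangle-\langle r^\sharp(b^*)\c r^\sharp(c^*),a^*\rangle$ (consistent with the skew-symmetric specialization of Eq.~\eqref{eq:adm-PYBE1} in Lemma~\ref{lem:eqiv}), while Eq.~\eqref{eq:of} paired against $c^*$ yields exactly the vanishing of this cyclic sum. The paper omits the proof, deferring to the identical computation for the associative Yang--Baxter equation, and your direct unwinding is precisely that argument; your closing observation that only bilinearity of $\c$ is used is exactly why the transfer from the associative case works.
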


\begin{rmk}
Since the dual representations of both adm-Poisson algebras and
associative algebras have the same form (see
Remark~\ref{rmk:same}), the interpretation of adm-Poisson
Yang-Baxter equation in terms of the operator form ~(\ref{eq:of}) in
the above Proposition~\ref{pro:of} explains partly why the
adm-Poisson Yang-Baxter equation has the same form as of the
associative Yang-Baxter equation.
\end{rmk}

\begin{thm}  Let $(P, \c)$ be an adm-Poisson algebra and $r\in P\otimes P$.
 Suppose that $r$ is skew-symmetric and
nondegenerate. Then $r$ is a solution of adm-Poisson Yang-Baxter
equation in $(P,\c)$ if and only if the inverse of the isomorphism
$P^*\rightarrow P$ induced by $r$, regarded as a bilinear form
$\omega$ on $P$ (that is, $\omega(x,y)=\langle
(r^\sharp)^{-1}(x),y\rangle$ for all $x,y\in P$), satisfies
\begin{equation}\label{eq:Conn}
    \omega(x\c y, z)+\omega(y\c z, x)+\omega(z\c x, y)=0,\;\;\forall x,y,z\in P.
    \end{equation}
\end{thm}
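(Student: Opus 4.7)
The plan is to reduce the statement to the operator form of the adm-Poisson Yang-Baxter equation provided by Proposition~\ref{pro:of} and then translate everything through the nondegenerate pairing $r^\sharp:P^*\to P$ back to a pairing on $P$. First I would note that skew-symmetry of $r$ gives $\langle r^\sharp(a^*),b^*\rangle=-\langle r^\sharp(b^*),a^*\rangle$, which means the bilinear form $\omega(x,y)=\langle(r^\sharp)^{-1}(x),y\rangle$ is itself skew-symmetric. Set $\phi=(r^\sharp)^{-1}:P\to P^*$.

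Next, since $r^\sharp$ is an isomorphism, any $x,y\in P$ may be written uniquely as $x=r^\sharp(a^*),\ y=r^\sharp(b^*)$ with $a^*=\phi(x),\ b^*=\phi(y)$. By Proposition~\ref{pro:of}, $r$ satisfies adm-PYBE if and only if
\begin{equation*}
x\star y=r^\sharp\bigl(-R^*(x)b^*-L^*(y)a^*\bigr),\quad\forall x,y\in P.
\end{equation*}
Applying $\phi$ and pairing with an arbitrary $z\in P$, this equality is equivalent to
\begin{equation*}
\omega(x\star y,z)=-\langle R^*(x)b^*,z\rangle-\langle L^*(y)a^*,z\rangle.
\end{equation*}

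Using the definition of dual representations $\langle\rho^*(u)v^*,w\rangle=-\langle v^*,\rho(u)w\rangle$, the right-hand side becomes $\langle b^*,z\star x\rangle+\langle a^*,y\star z\rangle=\omega(y,z\star x)+\omega(x,y\star z)$. Applying skew-symmetry of $\omega$ yields
\begin{equation*}
\omega(x\star y,z)+\omega(y\star z,x)+\omega(z\star x,y)=0,
\end{equation*}
which is exactly Eq.~(\ref{eq:Conn}). Conversely, since this identity holds for every $z\in P$ and $\omega$ is nondegenerate, one recovers the operator identity characterizing adm-PYBE. Thus both implications are obtained in parallel, and no further case analysis is needed.

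I do not expect a serious obstacle here: once Proposition~\ref{pro:of} is in hand, the argument is essentially bookkeeping between the pairings $\langle-,-\rangle$, the dual actions $L^*,R^*$, and the skew-symmetry of $r$ and $\omega$. The only subtle point to keep straight is the sign convention for $L^*$ and $R^*$ declared in the conventions section, and the fact that skew-symmetry of $r$ is used twice---once to guarantee that $\omega$ itself is skew-symmetric, and once implicitly through Proposition~\ref{pro:of}, whose hypothesis already requires $r$ skew-symmetric.
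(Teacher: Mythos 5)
Your proof is correct and follows exactly the route the paper intends: the paper omits this proof, stating it is identical to the associative Yang--Baxter case, and that standard argument is precisely your translation of the operator form in Proposition~\ref{pro:of} through the isomorphism $r^\sharp$ together with the sign convention for $L^*,R^*$ and the skew-symmetry of $\omega$. All steps are reversible (invertibility of $\phi=(r^\sharp)^{-1}$ and nondegeneracy of the pairing with $z$), so both implications follow as you say.
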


At the end of this section, we study the relationship between the
adm-Poisson bialgebras $(P,\c,\alpha)$ with $\alpha$ defined by
Eq.~(\ref{eq:cobo}) and the coboundary Poisson bialgebras given in
Section 2.

Let $(P,[\;,\;],\circ,\delta,\Delta)$  be a coboundary Poisson
bialgebra with  $\delta, \Delta$ defined by Eqs. $(\ref{eq:PB1})$
and $(\ref{eq:PB2})$ through $r\in P\otimes P$ respectively.
Define a linear map $\alpha:P\to P\otimes P$ by
\begin{equation}\label{eq:alpha}\alpha(x)=\delta(x)+\Delta(x),\quad \forall x\in
P.\end{equation} Then we have
\begin{equation*}
\alpha(x)=\delta(x)+\Delta(x)=({\rm id}\otimes L(x)-R(x)\otimes{\rm id})r,\quad \forall x\in P.
\end{equation*}
Therefore, by Proposition \ref{cbialgequ}, every coboundary
Poisson bialgebra naturally induces an adm-Poisson bialgebra
structure $(P,\c,\alpha)$ with $\alpha$ satisfying
Eq.~(\ref{eq:cobo}).

Conversely, let $(P,\c,\alpha)$ be an adm-Poisson bialgebra
with $\alpha$ defined by Eq.~(\ref{eq:cobo}) through $r\in
P\otimes P$. Define two linear maps $\delta,\Delta:P\rightarrow
P\otimes P$ as follows:
\begin{eqnarray}\label{eq:Deltadelta}
 \delta(x)=\frac{1}{2}(\alpha(x)-\tau\alpha(x)),\quad\Delta(x)=\frac{1}{2}(\alpha(x)+\tau\alpha(x)),\quad \forall x\in P.
\end{eqnarray}
By Proposition \ref{cbialgequ}, $(P,[\;,\;],\circ,\delta,\Delta)$
is a Poisson bialgebra. Let $x\in P$.  Then we have
%\begin{eqnarray}
%\label{eq:Delta1}\Delta(x)&=&({\rm id}\otimes L_\circ(x)-L_\circ(x)\otimes{\rm id}) r+\frac{1}{2}\big(({\rm
%id}\otimes L(x)-R(x)\otimes{\rm id})(r+\tau(r)) \big),\\
%\label{eq:delta1}\delta(x)&=&(\ad(x)\otimes \id+\id\otimes \ad(x))r-\frac{1}{2}\big(({\rm
%id}\otimes L(x)-R(x)\otimes{\rm id})(r+\tau(r)) \big).
%\end{eqnarray}

\begin{eqnarray}\label{eq:delta}
\delta(x)=\frac{1}{2}[({\rm ad}(x)\otimes {\rm id}+{\rm id}\otimes{\rm
ad}(x))(r-\tau(r))+\underline{({\rm
id}\otimes L_\circ(x)-L_\circ(x)\otimes{\rm id})(r+\tau(r))}],
\end{eqnarray}
\begin{eqnarray}\label{eq:Delta}
\Delta(x)=\frac{1}{2}[({\rm id}\otimes
L_\circ(x)-L_\circ(x)\otimes{\rm id})(r-\tau(r))+\underline{({\rm ad}(x)\otimes {\rm
id}+{\rm id}\otimes{\rm ad}(x))(r+\tau(r))}].
\end{eqnarray}
Then $(P,[\;,\;],\alpha,r)$ is a coboundary Poisson bialgebra if
and only if there exists $r_1\in P\otimes P$ satisfying
\begin{equation}\label{eq:ss1}({\rm
id}\otimes L_\circ(x)-L_\circ(x)\otimes{\rm id})(r+\tau(r))=({\rm ad}(x)\otimes {\rm
id}+{\rm id}\otimes{\rm ad}(x))(r_1),
\end{equation}
\begin{equation}\label{eq:ss2}
({\rm ad}(x)\otimes {\rm
id}+{\rm id}\otimes{\rm ad}(x))(r+\tau(r))=({\rm id}\otimes
L_\circ(x)-L_\circ(x)\otimes{\rm id}) (r_1).
\end{equation}
%\liu{ A Poisson bialgebra $(P,[\;,\;],\circ,\delta,\Delta)$ is called
%coboundary if $\delta$ and $\Delta$ satisfy
%\begin{eqnarray}
% \delta(x)&=&({\rm ad}(x)\otimes {\rm id}+{\rm id}\otimes{\rm ad}(x))r,\\
%  \Delta(x)&=&(L_{\circ}(x)\otimes{\rm id}-{\rm id}\otimes L_{\circ}(x))r.
%\end{eqnarray}
%By Eqs.~(\ref{eq:ss1}) and (\ref{eq:ss2}), $\delta$ and $\Delta$ defined by \eqref{eq:delta} and \eqref{eq:Delta} respectively are given by
%\begin{eqnarray*}
%\delta(x)&=&\frac{1}{2}({\rm ad}(x)\otimes {\rm id}+{\rm id}\otimes{\rm
%ad}(x))(r-\tau(r)+r_1),\\
%\Delta(x)&=&\frac{1}{2}({\rm id}\otimes
%L_\circ(x)-L_\circ(x)\otimes{\rm id})(r-\tau(r)+r_2).
%\end{eqnarray*}
%By the definition of coboundary Poisson bialgebra, $r_1=r_2$.}
Moreover, Eqs.~(\ref{eq:ss1}) and (\ref{eq:ss2}) hold if and only if
the following equations hold:
\begin{equation}\label{eq:sss1}
({\rm id}\otimes L(x)-R(x)\otimes {\rm id})(r+\tau(r)-r_1)=0,
\end{equation}
\begin{equation}\label{eq:sss2}
(L(x)\otimes {\rm id}-{\rm id}\otimes R(x))(r+\tau(r) +r_1)=0.
\end{equation}
Therefore for adm-Poisson bialgebras with $\alpha$ defined by Eq.~(\ref{eq:cobo}),
the corresponding Poisson bialgebras given in
Proposition~\ref{cbialgequ} include but not limited to coboundary
ones. Furthermore, %note that Eq.~(\ref{eq:con1}) holds if and only if
%\begin{equation}({\rm
%id}\otimes L_\circ(x)-L_\circ(x)\otimes{\rm id})(r+\tau(r))=({\rm
%ad}(x)\otimes {\rm id}+{\rm id}\otimes{\rm
%ad}(x))(r+\tau(r)).\end{equation} Hence
if Eq.~(\ref{eq:con1})
holds (in particular, when $r$ is symmetric), then
Eqs.~(\ref{eq:sss1}) and (\ref{eq:sss2}) hold with
$r_1=r+\tau(r)$. Therefore in this case, an adm-Poisson
bialgebra with $\alpha$ defined by Eq.~(\ref{eq:cobo}) induces a
coboundary Poisson bialgebra.

Summarizing the above study, we have the following conclusion.

\begin{pro}\label{pro:corre} Let $(P,[\;,\;],\circ,\delta,\Delta)$  be a coboundary Poisson
bialgebra with  $\delta, \Delta $ defined by Eqs. $(\ref{eq:PB1})$
and $(\ref{eq:PB2})$ through $r\in P\otimes P$ respectively. Then
$(P,\c,\alpha)$ is an adm-Poisson bialgebra, where $(P,\c)$ is the
corresponding adm-Poisson algebra and $\alpha$ is defined by
Eq~(\ref{eq:alpha}) satisfying Eq.~(\ref{eq:cobo}). Conversely,
let $(P,\c,\alpha)$ be an adm-Poisson bialgebra with $\alpha$
defined by Eq.~(\ref{eq:cobo}) through $r\in P\otimes P$. Then
$(P,[\;,\;],\circ,r)$ is a coboundary Poisson bialgebra, where
$(P,[\;,\;],\circ)$ is the corresponding Poisson algebra and
$\Delta, \delta$ are defined by Eq.~(\ref{eq:Deltadelta}), if and
only if Eqs.~(\ref{eq:sss1}) and (\ref{eq:sss2}) hold for some
$r_1\in P\otimes P$. In particular, when $r$ satisfies
Eq.~(\ref{eq:con1}) or  $r$ is skew-symmetric,
  an adm-Poisson bialgebra $(P,\c,\alpha)$ with $\alpha$ defined by
Eq.~(\ref{eq:cobo}) through $r\in P\otimes P$ exactly corresponds
to a coboundary Poisson bialgebra.
\end{pro}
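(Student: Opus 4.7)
The plan is to leverage Proposition~\ref{cbialgequ}, which already establishes a bijective correspondence between adm-Poisson bialgebras $(P,\star,\alpha)$ and Poisson bialgebras $(P,[\;,\;],\circ,\delta,\Delta)$ via the polarization/depolarization $\alpha=\delta+\Delta$, $\delta=\tfrac12(\alpha-\tau\alpha)$, $\Delta=\tfrac12(\alpha+\tau\alpha)$. Since the general bialgebra correspondence is already proved, what remains is to track whether the coboundary property transfers, i.e.\ whether the underlying $r$-element can be taken to be the same (forward direction) or, conversely, whether an $r_1$ can be produced realizing the Poisson bialgebra as coboundary (reverse direction).

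For the forward direction, I would start from $\delta,\Delta$ defined by Eqs.~\eqref{eq:PB1}--\eqref{eq:PB2} and simply add them. Using the defining identities $L=L_\circ+\ad$ and $R=L_\circ-\ad$ recalled after the definition of the adjoint representation, the sum collapses to
\begin{equation*}
\alpha(x)=\delta(x)+\Delta(x)=(\ad(x)\otimes\id+\id\otimes\ad(x))r+(\id\otimes L_\circ(x)-L_\circ(x)\otimes\id)r=(\id\otimes L(x)-R(x)\otimes\id)r,
\end{equation*}
which is exactly Eq.~\eqref{eq:cobo}. Proposition~\ref{cbialgequ} then delivers the adm-Poisson bialgebra structure.

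For the converse, I would run the depolarization on $\alpha(x)=(\id\otimes L(x)-R(x)\otimes\id)r$ using $\ad=\tfrac12(L-R)$ and $L_\circ=\tfrac12(L+R)$. Separating the symmetric and antisymmetric parts of $r$ produces precisely the expressions displayed in Eqs.~\eqref{eq:delta}--\eqref{eq:Delta}, where the non-underlined terms are already of coboundary form for $r$ itself. The bialgebra is coboundary in the Poisson sense exactly when the underlined leftover pieces can be absorbed into a coboundary from some $r_1\in P\otimes P$; rewriting this absorption condition with the conversions $\ad\leftrightarrow\tfrac12(L-R)$, $L_\circ\leftrightarrow\tfrac12(L+R)$ turns Eqs.~\eqref{eq:ss1}--\eqref{eq:ss2} into the stated Eqs.~\eqref{eq:sss1}--\eqref{eq:sss2}. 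The two special cases follow at once: if Eq.~\eqref{eq:con1} holds, set $r_1=r+\tau(r)$, so that Eq.~\eqref{eq:sss1} becomes trivial and Eq.~\eqref{eq:sss2} reduces to a multiple of Eq.~\eqref{eq:con1}; if $r$ is skew-symmetric, then $r+\tau(r)=0$ and $r_1=0$ works.

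The only real obstacle is conceptual rather than computational: the underlined remainders in Eqs.~\eqref{eq:delta}--\eqref{eq:Delta} are not a priori themselves coboundaries, so one must cleanly identify the witness $r_1$ and verify the two compatibility equations. The symmetric-part/antisymmetric-part decomposition of $r$ makes this explicit and pinpoints Eq.~\eqref{eq:con1} as the key hypothesis that trivially furnishes such an $r_1$, which is why the coboundary adm-Poisson bialgebras of Section~5 properly contain the coboundary Poisson bialgebras of \cite{NB1} in general but coincide with them in the symmetric or skew-symmetric regimes.
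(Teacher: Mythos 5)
Your proposal is correct and follows essentially the same route as the paper: the forward direction collapses $\delta+\Delta$ to $(\id\otimes L(x)-R(x)\otimes\id)r$ via $L=L_\circ+\ad$, $R=L_\circ-\ad$ and invokes Proposition~\ref{cbialgequ}, while the converse depolarizes $\alpha$, isolates the underlined remainders in Eqs.~\eqref{eq:delta}--\eqref{eq:Delta}, recasts the absorption condition \eqref{eq:ss1}--\eqref{eq:ss2} as \eqref{eq:sss1}--\eqref{eq:sss2} by taking sums and differences, and exhibits $r_1=r+\tau(r)$ under Eq.~\eqref{eq:con1}. The only slight imprecision is the remark that the non-underlined terms are ``of coboundary form for $r$ itself'' --- they are coboundary for the skew-symmetric part $\tfrac12(r-\tau(r))$, so the witness for the Poisson coboundary structure is $\tfrac12(r-\tau(r)+r_1)$ rather than $r$ --- but this does not affect the argument.
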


The relationship between the PYBE and the adm-PYBE is given as
follows.

\begin{pro}\label{pro:twoeq}
Let $(P, \c)$ be an adm-Poisson algebra and $(P, [\;,\;],\circ)$
be the corresponding Poisson algebra. Let $r\in P\otimes P$. If
$r$ is a solution of PYBE, then $r$ is a solution of adm-PYBE.
Conversely, if $r$ satisfies Eq.~(\ref{eq:con1}) and $r$ is a
solution of adm-PYBE, then $r$ is a solution of PYBE. In
particular, if $r$ is skew-symmetric, then  $r$ is a solution of
PYBE  if and only if $r$ is a solution of adm-PYBE.
\end{pro}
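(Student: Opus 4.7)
The plan is to reduce the proposition to two clean identities relating $\mathbf{P}(r)$ and the auxiliary expression $\mathbf{Q}(r) = r_{12}\c r_{23}-r_{23}\c r_{13}-r_{13}\c r_{12}$ to $\mathbf{A}(r)$ and $\mathbf{C}(r)$, via the polarization $x\c y = x\circ y + [x,y]$. Writing $r = \sum_i x_i\otimes y_i$ and expanding each of the three products $r_{ab}\c r_{cd}$ appearing in $\mathbf{P}(r) = r_{23}\c r_{12}-r_{13}\c r_{23}-r_{12}\c r_{13}$ as $r_{ab}\circ r_{cd} + [r_{ab},r_{cd}]$, I expect the $\circ$-terms to assemble directly into $\mathbf{A}(r)$, while the Lie bracket terms (after using $-[r_{13},r_{23}] = [r_{23},r_{13}]$ and $-[r_{12},r_{13}] = [r_{13},r_{12}]$) reconstitute $\mathbf{C}(r)$. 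This would give
\[
\mathbf{P}(r) = \mathbf{A}(r) + \mathbf{C}(r).
\]
A parallel expansion for $\mathbf{Q}(r)$, now exploiting the commutativity of $\circ$ (which gives $r_{12}\circ r_{23} = r_{23}\circ r_{12}$ and so on), will yield
\[
\mathbf{Q}(r) = \mathbf{A}(r) - \mathbf{C}(r).
\]

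With these two identities in hand, the forward implication is immediate: if $r$ solves PYBE, then $\mathbf{A}(r) = \mathbf{C}(r) = 0$, hence $\mathbf{P}(r) = 0$. For the converse, suppose $r$ satisfies Eq.~(\ref{eq:con1}) and $\mathbf{P}(r) = 0$. Then Lemma~\ref{lem:eqiv} gives $\mathbf{Q}(r) = 0$ as well, and the combinations $\mathbf{P}(r)\pm\mathbf{Q}(r)$ isolate $\mathbf{A}(r) = 0$ and $\mathbf{C}(r) = 0$, proving $r$ is a solution of PYBE. The skew-symmetric case then follows at once: $r+\tau(r) = 0$ makes Eq.~(\ref{eq:con1}) automatic, so the biconditional holds.

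The main obstacle is the bookkeeping in establishing the two structural identities $\mathbf{P}(r) = \mathbf{A}(r)+\mathbf{C}(r)$ and $\mathbf{Q}(r) = \mathbf{A}(r)-\mathbf{C}(r)$. The first uses only the splitting $\c = \circ + [\,,\,]$ together with the signs of the Lie bracket; the second additionally requires commutativity of $\circ$ to rearrange pairwise products like $r_{12}\circ r_{23}$ into the ordering appearing in $\mathbf{A}(r)$. Once these two identities are recorded, the proposition reduces to a $2\times 2$ linear-algebra argument in the pair $(\mathbf{A}(r),\mathbf{C}(r))$ combined with the already-proved Lemma~\ref{lem:eqiv}, with no further computation required.
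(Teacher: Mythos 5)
Your proof is correct, and the two structural identities you rely on do hold exactly as stated: expanding $\c=\circ+[\;,\;]$ componentwise and using the antisymmetry of the bracket (e.g.\ $-[r_{13},r_{23}]=[r_{23},r_{13}]$) gives ${\bf P}(r)={\bf A}(r)+{\bf C}(r)$, and the commutativity of $\circ$ (e.g.\ $r_{12}\circ r_{23}=r_{23}\circ r_{12}$) together with the bracket signs gives ${\bf Q}(r)={\bf A}(r)-{\bf C}(r)$; the paper itself records both identities, though only later, in the proof of Corollary~\ref{corollary1}. Your forward direction is then essentially the paper's: the paper establishes the same additive relation ${\bf P}={\bf A}+{\bf C}$, but does so by evaluating all three expressions on triples $(a^*,b^*,c^*)$ after splitting $r$ into its symmetric and skew-symmetric parts $\frks,\frka$. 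Where you genuinely diverge is the converse. The paper proves it by a second direct computation, showing that $2{\bf A}(r)(a^*,b^*,c^*)$ equals ${\bf P}(r)(a^*,b^*,c^*)+{\bf P}(r)(b^*,a^*,c^*)$ plus correction terms in $\frks$ and $\frka$ that vanish under Eq.~(\ref{eq:con1}), and then asserts the analogous statement for ${\bf C}(r)$. You instead invoke Lemma~\ref{lem:eqiv} to pass from ${\bf P}(r)=0$ to ${\bf Q}(r)=0$ and then solve the $2\times 2$ system ${\bf A}=\tfrac12({\bf P}+{\bf Q})$, ${\bf C}=\tfrac12({\bf P}-{\bf Q})$. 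This is cleaner: it reuses a lemma already proved, avoids repeating the functional-evaluation computation, and makes transparent why Eq.~(\ref{eq:con1}) is exactly the hypothesis needed (it is what makes ${\bf P}$ and ${\bf Q}$ vanish together). The only bookkeeping you defer --- verifying the two identities --- is routine and checks out, so the argument is complete once those expansions are written down.
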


\begin{proof} As the same as in the proof of Lemma~\ref{lem:eqiv}, let $r=\frka+\frks\in P\otimes P$ with the skew-symmetric part $\frka$
and the symmetric part $\frks$.  Let $a^*,b^*,c^*\in P^*$. Then we
have {\small  \begin{eqnarray}
 &&( r_{23}\circ r_{12}-r_{13}\circ r_{23}-r_{12}\circ r_{13})(a^*,b^*,c^*)\nonumber\\
 \nonumber&&=-\langle\frka^\sharp(c^*)\circ \frka^\sharp(a^*),b^*\rangle-\langle\frka^\sharp(a^*)\circ \frka^\sharp(b^*),c^*\rangle-\langle\frka^\sharp(b^*)\circ \frka^\sharp(c^*),a^*\rangle+\langle\frks^\sharp(c^*)\circ \frks^\sharp(a^*),b^*\rangle\\
 \nonumber&&\mbox{}\hspace{0.5cm} -\langle\frks^\sharp(a^*)\circ \frks^\sharp(b^*),c^*\rangle-\langle\frks^\sharp(b^*)\circ \frks^\sharp(c^*),a^*\rangle-\langle\frka^\sharp(c^*)\circ \frks^\sharp(a^*),b^*\rangle+\langle\frks^\sharp(c^*)\circ \frka^\sharp(a^*),b^*\rangle\\
&&\mbox{}\hspace{0.5cm}-\langle\frka^\sharp(a^*)\circ
\frks^\sharp(b^*),c^*\rangle-\langle\frks^\sharp(a^*)\circ
\frka^\sharp(b^*),c^*\rangle+\langle\frka^\sharp(b^*)\circ
\frks^\sharp(c^*),a^*\rangle+\langle\frks^\sharp(b^*)\circ
\frka^\sharp(c^*),a^*\rangle,\label{eq:AYBE1}\\
\nonumber &&( [r_{23}, r_{12}]+[r_{23},r_{13}]+[r_{13},r_{12}])(a^*,b^*,c^*)\\
 \nonumber&&=-\langle[\frka^\sharp(c^*), \frka^\sharp(a^*)],b^*\rangle-\langle[\frka^\sharp(a^*), \frka^\sharp(b^*)],c^*\rangle-\langle[\frka^\sharp(b^*),\frka^\sharp(c^*)],a^*\rangle+\langle[\frks^\sharp(c^*), \frks^\sharp(a^*)],b^*\rangle\\
\nonumber &&-\langle[\frks^\sharp(a^*), \frks^\sharp(b^*)],c^*\rangle-\langle[\frks^\sharp(b^*), \frks^\sharp(c^*)],a^*\rangle-\langle[\frka^\sharp(c^*), \frks^\sharp(a^*)],b^*\rangle+\langle[\frks^\sharp(c^*), \frka^\sharp(a^*)],b^*\rangle\\
 &&-\langle[\frka^\sharp(a^*),
\frks^\sharp(b^*)],c^*\rangle-\langle[\frks^\sharp(a^*),
\frka^\sharp(b^*)],c^*\rangle+\langle[\frka^\sharp(b^*),
\frks^\sharp(c^*)],a^*\rangle+\langle[\frks^\sharp(b^*),
\frka^\sharp(c^*)],a^*\rangle.\label{eq:YBE1}
 \end{eqnarray}}
 By Eqs. \eqref{eq:adm-PYBE1} and \eqref{eq:PA-sPA}, we have
 \begin{eqnarray*}
   &&( r_{23}\c r_{12}-r_{13}\c r_{23}-r_{12}\c r_{13})(a^*,b^*,c^*)\\
   &=&( r_{23}\circ r_{12}-r_{13}\circ r_{23}-r_{12}\circ r_{13})(a^*,b^*,c^*)+( [r_{23}, r_{12}]+[r_{23},r_{13}]+[r_{13},r_{12}])(a^*,b^*,c^*).
 \end{eqnarray*}
 Thus if $r$ is a solution of PYBE, then $r$ is a solution of adm-PYBE.

 Conversely, suppose that  $r$ is a solution of adm-PYBE. By Eqs.~(\ref{eq:sPA-PA}) and \eqref{eq:AYBE1}, we have
 \begin{eqnarray*}
&&2( r_{23}\circ r_{12}-r_{13}\circ r_{23}-r_{12}\circ r_{13})(a^*,b^*,c^*)\\
&=&( r_{23}\c r_{12}-r_{13}\c r_{23}-r_{12}\c r_{13})(a^*,b^*,c^*)+( r_{23}\c r_{12}-r_{13}\c r_{23}-r_{12}\c r_{13})(b^*,a^*,c^*)\\
&&+2\big(\langle\frks^\sharp(a^*)\star \frks^\sharp(c^*),b^*\rangle-\langle\frks^\sharp(c^*)\star \frks^\sharp(b^*),a^*\rangle-\langle\frks^\sharp(a^*)\star \frka^\sharp(c^*),b^*\rangle+\langle\frka^\sharp(c^*)\star \frks^\sharp(b^*),a^*\rangle\big).
 \end{eqnarray*}
 By Eq.~(\ref{eq:con1}), we have
 \begin{eqnarray*}
   \langle\frks^\sharp(a^*)\star \frks^\sharp(c^*),b^*\rangle=\langle\frks^\sharp(c^*)\star
   \frks^\sharp(b^*),a^*\rangle,\;\;
   \langle\frks^\sharp(a^*)\star \frka^\sharp(c^*),b^*\rangle=\langle\frka^\sharp(c^*)\star \frks^\sharp(b^*),a^*\rangle.
 \end{eqnarray*}
 Then  $r$ satisfies
 $$r_{23}\circ r_{12}-r_{13}\circ r_{23}-r_{12}\circ r_{13}=0,$$
that is, $r$ is a solution of AYBE. Similarly, $r$ is a solution
of CYBE. Hence  %that is, $r$ satisfies
 %$$[r_{23}, r_{12}]+[r_{23},r_{13}]+[r_{13},r_{12}]=0.$$
 $r$ is a solution of PYBE.
\end{proof}

 %By Proposition \ref{pro:coboundary adm-Poisson} and Proposition \ref{pro:corre}, we
 %have the following equivalent expression of Theorem

There is an equivalent expression of Theorem~
\ref{cLie-Poissonbialg} as follows.

\begin{cor}\label{corollary1}
Let $(P,[\;,\;],\circ)$ be a Poisson algebra and $r\in P\otimes
P$. Suppose that the linear maps $\delta: P\to \wedge^2P$ and $
\Delta:P\otimes P\to P$ are defined by Eqs. $(\ref{eq:PB1})$ and
$(\ref{eq:PB2})$ respectively. Then
$(P,[\;,\;],\circ,\delta,\Delta)$ is a Poisson bialgebra if and
only if the following conditions are satisfied:
\begin{itemize}
    \item[\textup{${\rm(a)}$}]
    $\big((L_\circ(x)+{\rm ad}(x))\otimes{\rm id}-{\rm id}\otimes (L_\circ(x)-{\rm ad}(x))\big)(r+\tau(r))=0$,
    \item[\textup{${\rm(b)}$}]
    $\big((L_\circ(x)-{\rm ad}(x))\otimes {\rm id}\otimes{\rm id}-{\rm id}\otimes{\rm id}\otimes (L_\circ(x)+{\rm ad}(x))\big)(\textup{\textbf{A}}(r)+\textup{\textbf{C}}(r))\\
    +\frac{1}{3}\Big(\big({\rm id}\otimes (L_\circ(x)-{\rm ad}(x))\otimes {\rm id}-{\rm id}\otimes{\rm id}\otimes (L_\circ(x)-{\rm ad}(x))\big)(\textup{\textbf{A}}(r)+\textup{\textbf{C}}(r))\\
    +\big((L_\circ(x)-{\rm ad}(x))\otimes{\rm id}\otimes{\rm id}-{\rm id}\otimes (L_\circ(x)-{\rm ad}(x))\otimes{\rm
    id}\big)(\textup{\textbf{A}}(r)-\textup{\textbf{C}}(r))\Big)=0$,
\end{itemize}
for all $x\in P$.
\end{cor}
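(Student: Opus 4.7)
The plan is to verify Corollary \ref{corollary1} as a reformulation of Theorem \ref{cLie-Poissonbialg}, using the substitutions $L(x) = L_\circ(x) + {\rm ad}(x)$ and $R(x) = L_\circ(x) - {\rm ad}(x)$, together with the identities
\[{\bf P}(r) = {\bf A}(r) + {\bf C}(r), \qquad {\bf Q}(r) = {\bf A}(r) - {\bf C}(r),\]
which follow at once by expanding $x \c y = x \circ y + [x,y]$ in the definitions of ${\bf P}$ and ${\bf Q}$.

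First I would establish that (a) is equivalent to the conjunction of (1) and (2). A direct substitution gives
\[(L(x)\otimes{\rm id} - {\rm id}\otimes R(x))(r+\tau(r)) = ({\rm ad}(x)\otimes{\rm id} + {\rm id}\otimes{\rm ad}(x))(r+\tau(r)) + (L_\circ(x)\otimes{\rm id} - {\rm id}\otimes L_\circ(x))(r+\tau(r)).\]
Since $r+\tau(r)$ is $\tau$-symmetric and the operator coefficient of the first summand is $\tau$-symmetric while that of the second is $\tau$-antisymmetric, the first summand lies in the $\tau$-invariant subspace of $P\otimes P$ and the second in the $\tau$-anti-invariant one. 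As these are complementary, (a) $= 0$ forces each summand to vanish separately, yielding (1) and (2); the converse implication (1) $\wedge$ (2) $\Rightarrow$ (a) is immediate.

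Next I would invoke the coboundary machinery of Section~5 to handle (b). By Proposition \ref{cbialgequ}, $(P,[\;,\;],\circ,\delta,\Delta)$ is a Poisson bialgebra if and only if $(P,\c,\alpha)$ with $\alpha = \delta + \Delta$ is an adm-Poisson bialgebra. Substituting Eqs.~(\ref{eq:PB1}) and (\ref{eq:PB2}) into $\alpha = \delta + \Delta$ and collecting terms one finds $\alpha(x) = ({\rm id}\otimes L(x) - R(x)\otimes{\rm id})r$, so $\alpha$ is of the coboundary form (\ref{eq:cobo}). Condition (a) is exactly Eq.~(\ref{eq:con1}), and Corollary \ref{pro:coboundary adm-Poisson} says that, under (a), the adm-Poisson bialgebra condition reduces to Eq.~(\ref{eq:cosp2}). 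A term-by-term substitution using $L = L_\circ + {\rm ad}$, $R = L_\circ - {\rm ad}$, ${\bf P}(r) = {\bf A}(r) + {\bf C}(r)$, ${\bf Q}(r) = {\bf A}(r) - {\bf C}(r)$ rewrites Eq.~(\ref{eq:cosp2}) verbatim as (b).

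Combining the two steps yields the desired chain of equivalences: (a) $\wedge$ (b) holds iff $(P,\c,\alpha)$ is an adm-Poisson bialgebra in coboundary form iff $(P,[\;,\;],\circ,\delta,\Delta)$ is a Poisson bialgebra. The only computation of substance is the coefficient-by-coefficient match between Eq.~(\ref{eq:cosp2}) and (b), which is routine index tracking once the substitutions are in hand; I expect this bookkeeping to be the main (but only) obstacle, with no new conceptual ingredient required.
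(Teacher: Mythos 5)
Your proposal is correct and follows essentially the same route as the paper: pass to the adm-Poisson bialgebra $(P,\c,\alpha)$ with $\alpha=\delta+\Delta$ of coboundary form, identify (a) with Eq.~(\ref{eq:con1}) and (b) with Eq.~(\ref{eq:cosp2}) via $L=L_\circ+{\rm ad}$, $R=L_\circ-{\rm ad}$, ${\bf P}(r)=\textbf{A}(r)+\textbf{C}(r)$, ${\bf Q}(r)=\textbf{A}(r)-\textbf{C}(r)$, and invoke Corollary~\ref{pro:coboundary adm-Poisson} together with the correspondence of Propositions~\ref{cbialgequ}/\ref{pro:corre}. Your symmetric/antisymmetric decomposition showing (a) is equivalent to conditions (1) and (2) of Theorem~\ref{cLie-Poissonbialg} is a nice self-contained touch (the paper only uses the direction $(1)\wedge(2)\Rightarrow$ (a)), and it is exactly what is needed to ensure that, in the converse direction, $\delta$ and $\Delta$ recovered from $\alpha$ coincide with the original ones from Eqs.~(\ref{eq:PB1})--(\ref{eq:PB2}).
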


\begin{proof} Suppose that $(P,[\;,\;],\circ,\delta,\Delta)$ is a Poisson
bialgebra. By Proposition \ref{pro:corre}, $(P,\c,\alpha)$ is an
adm-Poisson bialgebra, where $(P,\c)$ is the corresponding
adm-Poisson algebra and $\alpha$ is defined by
Eq~(\ref{eq:alpha}). Furthermore, by Conditions (1) and (2) in
Theorem \ref{cLie-Poissonbialg}, we have
  $$((L_\circ(x)+{\rm ad}(x))\otimes{\rm id}-{\rm id}\otimes (L_\circ(x)-{\rm ad}(x)))(r+\tau(r))=0,$$
that is, Condition (a) holds.  By Corollary \ref{pro:coboundary
adm-Poisson}, Eq.~(\ref{eq:cosp2}) holds, that is, for all $x\in
P$,
  \begin{eqnarray*}
&&(R(x)\otimes {\rm id}\otimes{\rm id}-{\rm id}\otimes{\rm
id}\otimes L(x)){\bf P}(r)+\frac{1}{3}\big(({\rm id}\otimes R(x)\otimes {\rm
id}-{\rm id}\otimes{\rm id}\otimes
R(x)){\bf P}(r)\nonumber\\
  &&+\left(R(x)\otimes{\rm id}\otimes{\rm id}-{\rm id}\otimes R(x)\otimes{\rm id}\right){\bf Q}(r)\big)=0,
    \end{eqnarray*}
where $L=L_\circ+\ad$ and $R=L_\circ-\ad$. Note that
$${\bf P}(r)=\textup{\textbf{A}}(r)+\textup{\textbf{C}}(r),\quad {\bf Q}(r)=\textup{\textbf{A}}(r)-\textup{\textbf{C}}(r).$$
Thus Condition (b) holds.

Conversely, let $(P,\star)$ be the corresponding adm-Poisson
algebra and $\alpha$ be the linear map defined by
Eq.~(\ref{eq:alpha}). If Condition (b) holds, then
Eq.~(\ref{eq:cosp2}) holds with $L=L_\circ+\ad$ and
$R=L_\circ-\ad$. Therefore by Condition (a) and Corollary
\ref{pro:coboundary adm-Poisson}, $(P,\c,\alpha)$ is an
adm-Poisson bialgebra. By Proposition \ref{pro:corre} and
Condition (a) again, $(P,[\;,\;],\circ,\delta,\Delta)$ is a
Poisson bialgebra.
\end{proof}

\delete{
\begin{proof}
If $(P,[\;,\;],\circ,\delta,\Delta)$ is a Poisson bialgebra, recall from Theorem \ref{cLie-Poissonbialg} that Eqs.~(1)-(5) hold, which is equivalent to $A(x)=0,B(x)=0,C(x)=0,D(x)=0,E(x)=0,\ \forall x\in P$, where
\begin{align*}
A(x)=&({\rm ad}(x)\otimes {\rm id}+{\rm id}\otimes{\rm ad}(x))(r+\tau(r));\\
B(x)=&(L_{\circ}(x)\otimes {\rm id}-{\rm id}\otimes L_{\circ}(x))(r+\tau(r));\\
C(x)=&({\rm ad}(x)\otimes {\rm id}\otimes {\rm id}+{\rm id}\otimes{\rm ad}(x)\otimes {\rm id}+{\rm id}\otimes {\rm id}\otimes {\rm ad}(x))\textup{\textbf{C}}(r);\\
D(x)=&(L_{\circ}(x)\otimes {\rm id}\otimes {\rm id}-{\rm id}\otimes{\rm id}\otimes L_{\circ}(x))\textup{\textbf{A}}(r);\\
E(x)=&-({\rm ad}(x)\otimes {\rm id}\otimes {\rm id})\textup{\textbf{A}}(r)+({\rm id}\otimes L_{\circ}(x)\otimes{\rm id}
    -{\rm id}\otimes{\rm id}\otimes L_{\circ}(x))\textup{\textbf{C}}(r).
\end{align*}
Now define
\begin{align*}
H(x)=&[(L_\circ(x)-{\rm ad}(x))\otimes{\rm id}-{\rm id}\otimes (L_\circ(x)+{\rm ad}(x))](r+\tau(r));\\
G(x)=&\big[(L_\circ(x)-{\rm ad}(x))\otimes {\rm id}\otimes{\rm id}-{\rm id}\otimes{\rm id}\otimes (L_\circ(x)+{\rm ad}(x))\big](-\textup{\textbf{C}}(r)-\textup{\textbf{A}}(r))\\
    &+\frac{1}{3}\big[{\rm id}\otimes (L_\circ(x)-{\rm ad}(x))\otimes {\rm id}-{\rm id}\otimes{\rm id}\otimes (L_\circ(x)-{\rm ad}(x))\big](-\textup{\textbf{C}}(r)-\textup{\textbf{A}}(r))\\
    &+\frac{1}{3}\big[(L_\circ(x)-{\rm ad}(x))\otimes{\rm id}\otimes{\rm id}-{\rm id}\otimes (L_\circ(x)-{\rm ad}(x))\otimes{\rm id}\big](\textup{\textbf{C}}(r)-\textup{\textbf{A}}(r)).
\end{align*}
By a straightforward calculation, we have
\begin{align*}
H(x)=&B(x)-A(x),\\
G(x)=&\frac{2}{3}C(x)-\frac{4}{3}D(x)-\frac{4}{3}E(x)-\frac{2}{3}\tau_{321}E(x)\\
&-\frac{2}{3}({\rm id}\otimes{\rm id}\otimes{\rm ad}(x))[-\tau_{12}(b_i\otimes B(a_i)+a_i\otimes B(b_i)]\\
&+\frac{2}{3}(L_\circ(x)\otimes{\rm id}\otimes{\rm id}-{\rm id}\otimes L_\circ(x)\otimes{\rm id})[\tau_{12}(b_i\otimes(A(a_i))+a_i\otimes A(b_i)],
\end{align*}
so $H(x)=0,\ G(x)=0$, therefore we have Eqs.~(a)-(b).

On the other hand, in the case
$$\delta(x)=({\rm ad}(x)\otimes {\rm id}+{\rm id}\otimes{\rm ad}(x))r,\quad \Delta(x)=({\rm id}\otimes L_{\circ}(x)-L_{\circ}(x)\otimes {\rm id})r,$$
the corresponding adm-Poisson algebra is equipped with
$$\alpha(x)=\delta(x)+\Delta(x)=({\rm id}\otimes L(x)-R(x)\otimes{\rm id})(r).$$

Notice that
\begin{align*}
M(r)&=r_{23}\c r_{12}-r_{13}\c r_{23}-r_{12}\c r_{13}=-\textup{\textbf{C}}(r)-\textup{\textbf{A}}(r),\\
N(r)&=r_{12}\c r_{23}-r_{23}\c r_{13}-r_{13}\c r_{12}=\textup{\textbf{C}}(r)-\textup{\textbf{A}}(r),
\end{align*}
hence we could see that Eqs.~(\ref{eq:eqv1})-(\ref{eq:eqv3}) hold
due to the condition $(a)$, and the combination of $(a)$ and $(b)$
guarantees that Eqs.~(\ref{eq:cosp}) holds. According to Theorem
\ref{co2}, $(P,\c,\alpha)$ is an adm-Poisson bialgebra.

Now applying Proposition \ref{cbialgequ}, $(P,[\;,\;],\circ,\tilde{\delta},\tilde\Delta)$ is a Poisson bialgebra, with
$$\tilde\delta=\frac{1}{2}\left(\alpha-\tau\alpha\right),\quad \tilde\Delta=\frac{1}{2}\left(\alpha+\tau\alpha\right).$$

Notice that $(a)+\tau(a)$ gives $({\rm ad}(x)\otimes {\rm id}+{\rm id}\otimes{\rm ad}(x))(r+\tau(r))=0$, $(a)-\tau(a)$ gives $(L_{\circ}(x)\otimes {\rm id}-{\rm id}\otimes L_{\circ}(x))(r+\tau(r))$, hence
\begin{align*}
\tilde\delta(x)&=\frac{1}{2}\left(\alpha(x)-\tau\alpha(x)\right)\\
&=\frac{1}{2}[({\rm ad}(x)\otimes {\rm id}+{\rm id}\otimes{\rm
ad}(x))(r-\tau(r))+({\rm
id}\otimes L_\circ(x)-L_\circ(x)\otimes{\rm id})(r+\tau(r))]\\
&=({\rm ad}(x)\otimes {\rm id}+{\rm id}\otimes{\rm ad}(x))r=\delta(x);\\
\tilde\Delta(x)&=\frac{1}{2}\left(\alpha(x)+\tau\alpha(x)\right)\\
&=\frac{1}{2}[({\rm id}\otimes
L_\circ(x)-L_\circ(x)\otimes{\rm id})(r-\tau(r))+({\rm ad}(x)\otimes {\rm
id}+{\rm id}\otimes{\rm ad}(x))(r+\tau(r))]\\
&=({\rm id}\otimes L_{\circ}(x)-L_{\circ}(x)\otimes {\rm id})r=\Delta(x),
\end{align*}
therefore $(P,[\;,\;],\circ,\delta,\Delta)$ is a coboundary Poisson bialgebra as desired.

\cm{why? some details should be given}

\cm{whether the above is enough? the following is needed?}

\cm{The whole proof looks a little confused. We need clarify every
detail.}

\jt{The detailed proof is provided. Please check whether this new version makes sense.}
\end{proof}

\cm{Maybe we should rewrite the whole conclusion and proof.}

\cm{It is not obvious? Why?}

\cm{In a summary, the last part involving Corollary 5.15 should be
rewritten.}}

\section{$\mathcal{O}$-operators of adm-Poisson algebras and pre-adm-Poisson algebras\label{Opre}}

In this section, we introduce the notions of
$\mathcal{O}$-operators of adm-Poisson algebras and
pre-adm-Poisson algebras to construct skew-symmetric solutions of
adm-Poisson Yang-Baxter  equation and hence to construct the
induced adm-Poisson bialgebras. Note the notion of pre-adm-Poisson
algebras given here is an equivalent presentation for the notion
of pre-Poisson algebras given by Aguiar in \cite{Aguiar2}, like
the correspondence between the presentation with one operation and
the usual presentation for Poisson algebras.

\begin{defi}
Let $(P,\c)$ be an adm-Poisson algebra and $(\frkl,\frkr,V)$ be a
representation of $(P,\c)$. A linear map $\theta:V\to P$ is called
an \textbf{\textup{$\mathcal{O}$-operator of $(P,\c)$ associated
to}} $(\frkl,\frkr,V)$ if $\theta$ satisfies
    \begin{equation}
    \theta(u)\c \theta(v)=\theta(\frkl(\theta(u))v+\frkr(\theta(v))u),\quad \forall u,v\in V.
    \end{equation}
\end{defi}

%%\begin{ex}
%%Let $(P,\c)$ be an adm-Poisson algebra. Then the identity map $id:P\to P$ is an $\mathcal{O}$-operator of $(P,\c)$ associated to
%%representations $(L,0,P)$ and $(0,R,P)$ respectively.
%%\end{ex}

\begin{ex}
Let $(P,\c)$ be an adm-Poisson algebra. An $\mathcal O$-operator
$R$ associated to the representation $(L,R,P)$ is called a {\bf
Rota-Baxter operator of weight zero}, that is, $R$ satisfies
\begin{equation}
R(x)\star R(y)=R(R(x)\star y+x\star R(y)),\;\;\forall x,y\in P.
\end{equation}
\end{ex}

\begin{ex}
Let $(P,\c)$ be an adm-Poisson algebra and $r\in P\otimes P$. If
$r$ is skew-symmetric, then by Proposition~\ref{pro:of}, $r$ is a
solution of adm-Poisson Yang-Baxter equation in $(P,\star)$ if and
only if $r^\sharp:P^*\rightarrow P$ is an $\mathcal O$-operator
associated to the representation $(-R^*,-L^*,P^*)$.
\end{ex}

There is the following construction of (skew-symmetric) solutions
of adm-Poisson Yang-Baxter equation in a semi-direct product
adm-Poisson algebra from an $\mathcal O$-operator of an
adm-Poisson algebra which is similar as for associative algebras
(\cite[Theorem 2.5.5]{Bai2}, hence the proof is omitted).

\begin{thm}\label{theorem2}
Let $(P,\c)$ be an adm-Poisson algebra and $(\frkl,\frkr,V)$ be a
representation of the adm-Poisson algebra $(P,\c)$. Let
$\theta:V\to P$ be a linear map which is identified as an element
in $P\ltimes_{-\frkr^*,-\frkl^*} V^*\otimes P\ltimes_{-\frkr^*,-\frkl^*} V^*$.
Then $r=\theta-\tau(\theta)$ is a skew-symmetric solution of
adm-PYBE in $P\ltimes_{-\frkr^*,-\frkl^*} V^*$ if and only if $\theta$ is
an $\mathcal{O}$-operator of $(P,\c)$ associated to the
representation $(\frkl,\frkr,V)$.
\end{thm}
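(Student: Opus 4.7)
The plan is to reduce the statement to an operator identity by invoking Proposition~\ref{pro:of}, which characterizes skew-symmetric solutions of the adm-PYBE via the operator form. Since $r=\theta-\tau(\theta)$ is skew-symmetric by construction, applying Proposition~\ref{pro:of} to the ambient adm-Poisson algebra $\hat P:=P\ltimes_{-\frkr^*,-\frkl^*}V^*$, the equation $\mathbf{P}(r)=0$ is equivalent to
$$
r^\sharp(\xi)\star_{\hat P}r^\sharp(\eta)=r^\sharp\bigl(-\hat R^*(r^\sharp(\xi))\eta-\hat L^*(r^\sharp(\eta))\xi\bigr),\quad \forall\,\xi,\eta\in\hat P^*,
$$
where $\hat L,\hat R$ denote the multiplication operators of $\hat P$. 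The strategy is to compute both sides of this identity explicitly under the natural identification $\hat P^*=P^*\oplus V$ and show that it is equivalent, component by component, to the $\mathcal O$-operator condition $\theta(u)\star\theta(v)=\theta(\frkl(\theta(u))v+\frkr(\theta(v))u)$.

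The first concrete step is to identify $r^\sharp$. Choosing a basis $\{e_i\}$ of $V$ with dual basis $\{e_i^*\}$, we may write $\theta=\sum_i\theta(e_i)\otimes e_i^*\in P\otimes V^*\subset\hat P\otimes\hat P$, so $r=\sum_i\bigl(\theta(e_i)\otimes e_i^*-e_i^*\otimes\theta(e_i)\bigr)$. A short pairing computation under $\hat P^*=P^*\oplus V$ yields
$$
r^\sharp(a^*+v)=-\theta(v)+\theta^*(a^*),\quad\forall\,a^*\in P^*,\;v\in V,
$$
where $\theta^*:P^*\to V^*$ is the dual of $\theta$.

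Next I would substitute this formula for $r^\sharp$ into the operator identity above with $\xi=a^*+v$ and $\eta=b^*+w$, and decompose both sides according to the splitting $\hat P=P\oplus V^*$. Using the semi-direct product rule $(x+\alpha)\star_{\hat P}(y+\beta)=x\star y-\frkr^*(x)\beta-\frkl^*(y)\alpha$ and the explicit form of the multiplication operators $\hat L,\hat R$ (whose restrictions to $P$ are $L,R$ and whose induced actions on $V^*$ are $-\frkr^*,-\frkl^*$), the $P$-component of the identity reduces precisely to
$$
\theta(v)\star\theta(w)=\theta\bigl(\frkl(\theta(v))w+\frkr(\theta(w))v\bigr),
$$
which is the $\mathcal O$-operator condition. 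The $V^*$-component, expressed in terms of $\theta^*$, then follows automatically by dualizing the same condition, since pairing any element $u\in V$ against it recovers the $P$-component equation paired with $a^*$ or $b^*$.

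The main obstacle is the bookkeeping in the semi-direct product: one must track carefully the signs arising from the dual-representation convention $(-\frkr^*,-\frkl^*)$ and verify that the $V^*$-component of the $\mathcal O$-operator equation for $r^\sharp$ is exactly the transpose of the $P$-component and not an independent constraint. This is facilitated by the observation in Remark~\ref{rmk:same} that the dual representations of adm-Poisson and associative algebras share the same form $(-\frkr^*,-\frkl^*)$, so the computation is structurally identical to the associative case treated in \cite[Theorem 2.5.5]{Bai2}, from which the argument adapts essentially verbatim.
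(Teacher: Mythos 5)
Your proposal is correct and is essentially the argument the paper has in mind: the paper omits the proof of Theorem~\ref{theorem2} precisely because it is the verbatim adaptation of the associative case in \cite[Theorem 2.5.5]{Bai2}, i.e.\ the reduction via the operator form of Proposition~\ref{pro:of} in the semi-direct product that you carry out. Your formula $r^\sharp(a^*+v)=-\theta(v)+\theta^*(a^*)$, the identification of the $P$-component with the $\mathcal O$-operator condition, and the observation that the $V^*$-component is its transpose all check out.
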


\begin{defi}
\textbf{\textup{A pre-adm-Poisson algebra}} is a triple
$(A,\succ,\prec)$ such that $A$ is a vector space,
$\succ,\prec:A\otimes A\to A$ are two bilinear operations
satisfying the following conditions:
\begin{eqnarray}
A(x,y,z):&=&-(x\succ y)\succ z-(x\prec y)\succ z+x\succ(y\succ z)+\frac{1}{3}\big(x\succ(z\prec y)\nonumber\\
      &&-z\prec(x\succ y)-z\prec(x\prec y)-y\succ(x\succ z)+y\succ(z\prec x)\big)=0,\label{eq:presp1}\\
B(x,y,z):&=&-x\succ(z\prec y)+(x\succ z)\prec y+\frac{1}{3}\big(-x\succ(y\succ z)+y\succ(x\succ z)\nonumber\\
      &&+z\prec(x\prec y)+z\prec(x\succ y)-z\prec(y\succ x)-z\prec(y\prec x)\big)=0,\label{eq:presp2}\\
C(x,y,z):&=&-z\prec(x\succ y)-z\prec(x\prec y)+(z\prec x)\prec y+\frac{1}{3}\big(-z\prec(y\succ x)\nonumber\\
     &&-z\prec(y\prec x)+y\succ(z\prec x)+x\succ(z\prec y)-x\succ(y\succ z)\big)=0\label{eq:presp3}
\end{eqnarray}
for all $x,y,z\in A$.
\end{defi}

\begin{pro}
Let $(A,\succ,\prec)$ be a pre-adm-Poisson algebra. Define
  \begin{equation}\label{eq:sum}x\c y=x\succ y+x\prec y,\quad \forall x,y\in
  A.\end{equation}
Then $(A,\star)$ is an adm-Poisson algebra, which is called the
\textup{\textbf{sub-adjacent adm-Poisson algebra}} of
$(A,\succ,\prec)$ and denoted by $A^c$ and $(A,\succ,\prec)$ is
called  the {\bf compatible pre-adm-Poisson algebra} structure on
the adm-Poisson algebra $A^c$.
\end{pro}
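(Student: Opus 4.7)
The plan is to verify the defining adm-Poisson identity \eqref{c1} for $x \star y := x \succ y + x \prec y$ by direct substitution. Expanding each of the six ternary $\star$-products in \eqref{c1} into its four monomials in $\succ$ and $\prec$ produces, on the full identity, a sum of roughly $48$ monomials, each of the form $u \,\sharp_1\, (v \,\sharp_2\, w)$ or $(u \,\sharp_1\, v) \,\sharp_2\, w$ where $\sharp_1, \sharp_2 \in \{\succ, \prec\}$ and $(u,v,w)$ is one of the six permutations of $(x,y,z)$ prescribed by \eqref{c1}.

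The key step is to recognize this entire sum as a linear combination of the three pre-adm-Poisson identities $A(\cdot,\cdot,\cdot)$, $B(\cdot,\cdot,\cdot)$, $C(\cdot,\cdot,\cdot)$ from \eqref{eq:presp1}--\eqref{eq:presp3}, evaluated at appropriate permutations of $(x,y,z)$. The feasibility of such a decomposition is strongly suggested by the common coefficient pattern: each of \eqref{c1} and \eqref{eq:presp1}--\eqref{eq:presp3} consists of a ``main'' term of weight $\pm 1$ together with a four-term correction of weight $\pm 1/3$. A practical organization is to group the expanded monomials first by their bi-operation type (the ordered pair $(\sharp_1,\sharp_2)$) and then by the nesting side (left- versus right-associated). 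After doing so, one checks that the coefficients assemble against those of $A$, $B$, $C$ (and their permutations in the symmetric group on three letters); since each of $A$, $B$, $C$ vanishes by hypothesis, so does the expansion of \eqref{c1}.

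The main obstacle is pure bookkeeping: one must correctly identify the twelve or so independent nesting patterns and balance rational coefficients across all six expansions, taking care that the ``outer $\succ$'' monomials are absorbed by copies of $A$, the ``outer $\prec$'' monomials by copies of $B$ and $C$, and the mixed-associativity monomials by the $1/3$-corrections. A more conceptual alternative would first establish (as announced in the paragraph preceding the proposition) that pre-adm-Poisson algebras are an equivalent presentation for Aguiar's pre-Poisson algebras of \cite{Aguiar2}, and then invoke Aguiar's result that the sub-adjacent structure of a pre-Poisson algebra is Poisson, combined with the polarization-depolarization correspondence \eqref{eq:PA-sPA}--\eqref{eq:sPA-PA} between Poisson and adm-Poisson algebras. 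This route replaces the direct computation with the (also nontrivial) verification of the equivalence of presentations, but yields the same conclusion without further work.
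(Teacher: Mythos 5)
Your proposal takes essentially the same route as the paper: the paper's proof simply expands the identity \eqref{c1} for $\star=\succ+\prec$ and asserts that the result is $A(x,y,z)+B(x,y,z)+C(x,y,z)$, which vanishes by \eqref{eq:presp1}--\eqref{eq:presp3}. Your allowance for permuted arguments is in fact the accurate version of the bookkeeping --- sorting the expanded monomials according to which of $x,y,z$ occupies the ``distinguished'' slot produces $A(x,y,z)$ together with $B$ and $C$ evaluated at permuted arguments (up to sign), so the decomposition closes up exactly as you predict.
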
\label{proposition1}

\begin{proof}
Let $(A,\succ,\prec)$ be a pre-adm-Poisson algebra. For all
$x,y,z\in A$, we have
\begin{align*}
&-(x\c y)\c z+x\c(y\c z)+\frac{1}{3}\big(-x\c(z\c y)+z\c(x\c y)+y\c(x\c z)-y\c(z\c x)\big)\\
=&\;A(x,y,z)+B(x,y,z)+C(x,y,z),
\end{align*}
\emptycomment{where
\begin{eqnarray*}
A(x,y,z)&=&-(x\succ y)\succ z-(x\prec y)\succ z+x\succ(y\succ z)+\frac{1}{3}\big(x\succ(z\prec y)-z\prec(x\succ y)\nonumber\\
      &&-z\prec(x\prec y)-y\succ(x\succ z)+y\succ(z\prec x)\big),\\
B(x,y,z)&=&-x\succ(z\prec y)+(x\succ z)\prec y+\frac{1}{3}\big(-x\succ(y\succ z)+y\succ(x\succ z)+z\prec(x\prec y)\nonumber\\
&&+z\prec(x\succ y)-z\prec(y\succ x)-z\prec(y\prec x)\big),\\
C(x,y,z)&=&-z\prec(x\succ y)-z\prec(x\prec y)+(z\prec x)\prec y+\frac{1}{3}\big(-z\prec(y\succ x)-z\prec(y\prec x)\nonumber\\
     &&+y\succ(z\prec x)+x\succ(z\prec y)-x\succ(y\succ z)\big).
\end{eqnarray*}}
By Eqs.~(\ref{eq:presp1})-(\ref{eq:presp3}), we have
$A(x,y,z)=B(x,y,z)=C(x,y,z)=0$. Hence $(A^c,\c)$ is an adm-Poisson
algebra.
\end{proof}

\begin{rmk}
In fact, the operad $PreadmPois$ of pre-adm-Poisson algebras is
 the disuccessor (splitting an operad into two pieces) of the operad
$admPois$ of adm-Poisson algebras in the sense of \cite{BBGN}.
Note that the operad $Dend$ of dendriform algebras introduced by
Loday (\cite{Loday}) is the disuccessor of the operad $Ass$ of
associative algebras. Hence pre-adm-Poisson algebras can be
regarded as analogue structures of dendriform algebras with many
similar properties (\cite{Bai2,BGN2}).
\end{rmk}

%%---------------------------------------------------------------------------------------------------
\begin{pro}
Let $(A,\succ,\prec)$ be a pre-adm-Poisson algebra. Then
$(L_\succ,R_\prec,A)$ is a representation of the sub-adjacent
 adm-Poisson algebra $(A^c,\c)$, where $L_\succ,R_\prec:A\rightarrow \End_{\mathbb F}(A)$ are defined by
  \begin{equation}
   L_\succ(x)y=x\succ y,\quad R_\prec(x)y=y\prec x,\quad\forall x,y\in A.
  \end{equation}
  Conversely, if $(A,\c)$ is an adm-Poisson algebra together with two bilinear operations $\succ,\prec:A\otimes A\to A$
such that $(L_\succ,R_\prec,A)$ is a representation of $(A,\c)$,
then $(A,\succ,\prec)$ is a pre-adm-Poisson algebra.
\end{pro}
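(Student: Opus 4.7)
The plan is to check directly that the three representation axioms of Definition~\ref{definition module}, when specialized to $\frkl = L_\succ$ and $\frkr = R_\prec$ with $\star = \succ + \prec$, are nothing but the three pre-adm-Poisson axioms \eqref{eq:presp1}--\eqref{eq:presp3} evaluated on a test element. Both directions of the equivalence will then follow simultaneously from this pointwise translation, without needing any further structural argument.

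Concretely, I would take each representation axiom in turn and apply both sides to an element $z \in A$. Under the identifications
\[
\frkl(x)z = x\succ z,\qquad \frkr(x)z = z\prec x,\qquad x\star y = x\succ y + x\prec y,
\]
the expression $\frkl(x\star y)z$ expands to $(x\succ y)\succ z + (x\prec y)\succ z$, while $\frkl(x)\frkl(y)z = x\succ(y\succ z)$, $\frkr(x\star y)z = z\prec(x\succ y)+z\prec(x\prec y)$, $\frkl(x)\frkr(y)z = x\succ(z\prec y)$, and so forth. Carrying out this expansion in Eq.~\eqref{c2} (after correcting the obvious $\frkl/\frkr$ pattern so that the operators match the trilinear identity \eqref{c1}) yields exactly $A(x,y,z) = 0$, which is \eqref{eq:presp1}. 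Similarly, Eq.~\eqref{c3} becomes $C(x,y,z)=0$, i.e.\ \eqref{eq:presp3}, and Eq.~\eqref{c4} becomes $B(x,y,z)=0$, i.e.\ \eqref{eq:presp2} (with appropriate permutations of the arguments).

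Given this pointwise dictionary, the forward direction is immediate: if $(A,\succ,\prec)$ is a pre-adm-Poisson algebra, then the three identities $A=B=C=0$ imply that Eqs.~\eqref{c2}--\eqref{c4} hold as operator identities on $A$, so $(L_\succ,R_\prec,A)$ is a representation of $(A^c,\star)$. For the converse, suppose $(A,\star)$ is an adm-Poisson algebra and $(L_\succ,R_\prec,A)$ is a representation of it. Evaluating the three representation axioms on any $z \in A$ gives $A(x,y,z) = B(x,y,z) = C(x,y,z) = 0$ for all $x,y,z \in A$, which is exactly the definition of a pre-adm-Poisson algebra; note that the relation $\star = \succ + \prec$ is not assumed here but is not needed, since the representation axioms are stated abstractly in terms of the given $\star$, and we only used it as a formal symbol throughout the matching.

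The main obstacle is really just bookkeeping: the three representation identities look formally different from the three pre-adm-Poisson identities, and one has to be careful to match terms correctly (including the coefficient $1/3$ and the signs of the six ``cyclic'' terms on each side). This can be organized cleanly by writing each side of \eqref{c2}--\eqref{c4} as a linear combination of the twelve monomials of the form $u\succ(v\succ w)$, $u\succ(w\prec v)$, $w\prec(u\succ v)$, $w\prec(u\prec v)$ (and their permutations) and verifying term by term that the resulting identity agrees with the corresponding one of $A,B,C$. Since no nontrivial manipulation beyond this pattern-matching is required, the proof amounts to a routine but careful verification.
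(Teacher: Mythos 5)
Your proposal is correct and follows essentially the same route as the paper: the paper's proof likewise just observes that, with $\frkl=L_\succ$ and $\frkr=R_\prec$ and $\star=\succ+\prec$, the three representation axioms evaluated on a test element are exactly the three pre-adm-Poisson identities, and that the converse is the same computation read backwards. One small correction to your dictionary: the pairing is \eqref{c2}$\leftrightarrow$\eqref{eq:presp1}, \eqref{c3}$\leftrightarrow$\eqref{eq:presp2} and \eqref{c4}$\leftrightarrow$\eqref{eq:presp3}, not the swapped version you state --- no permutation of arguments turns the left-nested term $(x\succ z)\prec y$ of $B$ into the term $(z\prec x)\prec y$ of $C$ --- but since the equivalence only uses the conjunction of the three identities, this mislabeling does not affect the validity of the argument.
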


\begin{proof}
Eq. \eqref{eq:presp1} implies that Eq. \eqref{c2} holds, Eq.
\eqref{eq:presp2} implies that Eq. \eqref{c3} holds and Eq.
\eqref{eq:presp3} implies that Eq. \eqref{c4} holds with
$\frkl=L_\succ$ and $\frkr=R_\prec$. Thus $(L_\succ,R_\prec,A)$ is a
representation of the sub-adjacent adm-Poisson algebra $(A^c,\c)$.
The converse can be proved similarly. \emptycomment{In fact, for
all $x,y,z \in A$, we have
\begin{eqnarray*}
 &&-L_\succ(a\star b)+L_\succ(a)L_\succ(b)-\frac{1}{3}\big(-L_\succ(a)R_\prec(b)+R_\prec(a\c b)+L_\succ(b)L_\succ(a)-L_\succ(b)R_\prec(a)\big)=0, \\
 &&-R_\prec(b)L_\succ(a)+L_\succ(a)R_\prec(b)-\frac{1}{3}\big[-L_\succ(a)L_\succ(b)+L_\succ(b)L_\succ(a)+R_\prec(a\star b)-R_\prec(b\star a)\big]=0,\\
 &&-R_\prec(b)R_\prec(a)+R_\prec(a\star b)-\frac{1}{3}\big[-R_\prec(b\star a)+L_\succ(b)R_\prec(a)+L_\succ(a)R_\prec(b)-L_\succ(a)L_\succ(b)\big]=0,
\end{eqnarray*}
hold if and only if Eqs.~(\ref{eq:presp1})-(\ref{eq:presp3}) hold
respectively, by letting the right-hand sides of Eqs.
$(a),(b),(c)$ act on an arbitrary element $c$ in $A$.}
\end{proof}

A direct consequence is given as follows.

\begin{cor} \label{cor:id}
Let $(A, \prec, \succ)$ be a pre-adm-Poisson algebra. Then the
identity map ${\rm id}$ is an $\mathcal O$-operator of the
sub-adjacent adm-Poisson algebra $(A^c,\c)$ associated to the
representation $(L_\succ,R_\prec$, $A)$.
\end{cor}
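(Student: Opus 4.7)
The plan is to verify the defining identity of an $\mathcal{O}$-operator directly by unwinding the definitions, using the preceding proposition which establishes that $(L_\succ, R_\prec, A)$ is a representation of the sub-adjacent adm-Poisson algebra $(A^c, \star)$.

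First I would invoke the definition of an $\mathcal{O}$-operator applied to $\theta = \mathrm{id} : A \to A$ with $V = A$, $P = A^c$, $\mathfrak{l} = L_\succ$ and $\mathfrak{r} = R_\prec$. The required condition reads
\[
x \star y \;=\; L_\succ(x)\, y + R_\prec(y)\, x \qquad \forall\, x, y \in A.
\]
By the definitions $L_\succ(x)y = x \succ y$ and $R_\prec(y)x = x \prec y$, the right-hand side equals $x \succ y + x \prec y$. Next I would apply equation \eqref{eq:sum}, which is exactly the definition of the sub-adjacent operation, namely $x \star y = x \succ y + x \prec y$. The two sides thus coincide identically.

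Finally, I would note that the representation hypothesis required in the definition of $\mathcal{O}$-operator is precisely the content of the proposition immediately preceding this corollary, so all hypotheses are satisfied and the claim follows. There is essentially no obstacle here: the statement is a tautological consequence of the defining relation for $\star$ together with the representation property already proved. The only point worth flagging is the bookkeeping of identifying $\mathfrak{l}(\theta(u))v$ with $L_\succ(x)y$ and $\mathfrak{r}(\theta(v))u$ with $R_\prec(y)x$, which is transparent once $\theta = \mathrm{id}$ is substituted.
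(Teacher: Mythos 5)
Your proposal is correct and matches the paper's reasoning exactly: the paper presents this corollary as a "direct consequence" of the preceding proposition, and the verification is precisely the unwinding you give, namely that the $\mathcal{O}$-operator identity for $\theta=\mathrm{id}$ reduces to $x\star y = x\succ y + x\prec y$, which is the definition of the sub-adjacent operation.
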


\begin{defi}
\begin{enumerate}
\item {\rm (\cite{G1})} A left \textbf{pre-Lie algebra} is a
vector space $A$ together with a bilinear operation $*:A\otimes
A\to A$ such that
\begin{equation}
    x*(y*z)-(x*y)*z=y*(x*z)-(y*x)* z,\;\;\forall x,y,z\in
    A.\end{equation}
\item {\rm (\cite{Lod2})} A left \textbf{Zinbiel algebra} is a
vector space $A$ together with a bilinear operation
$\cdot:A\otimes A\to A$ such that
\begin{equation}
    x\cdot(y\cdot z)=(y\cdot x)\cdot z+(x\cdot y)\cdot
    z,\;\;\forall x,y,z\in A.
\end{equation}

\item {\rm (\cite{Aguiar2})} A left \textbf{ pre-Poisson
algebra} is a triple $(A,\cdot,*)$ such that $(A,\cdot)$ is a
left Zinbiel algebra, $(A,*)$ is a left pre-Lie algebra and the
following conditions hold:
\begin{eqnarray}
&&(x*y-y*x)\cdot z=x*(y\cdot z)-y\cdot(x*z),\\
&&(x\cdot y+y\cdot x)*z=x\cdot(y*z)+y\cdot(x*z),\;\forall x,y,z\in
A.
\end{eqnarray}
\end{enumerate}
\end{defi}

\begin{pro} {\rm (\cite{Aguiar2})}
Let $(A,\cdot,*)$ be a left pre-Poisson algebra. Define
\begin{align*}
    x\circ y=x\cdot y+y\cdot x, \quad [x,y]=x*y-y*x,\quad \forall x,y\in A.
\end{align*}
Then $(A,[\;,\;],\circ)$ is a Poisson algebra.
\end{pro}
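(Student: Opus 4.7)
The plan is to verify directly the three defining properties of a Poisson algebra for $(A,[\,,\,],\circ)$: commutativity and associativity of $\circ$, the Lie axioms for $[\,,\,]$, and the Leibniz rule $[x,y\circ z]=[x,y]\circ z+y\circ[x,z]$.

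Commutativity of $\circ$ is immediate from the definition. For associativity, I would expand both $(x\circ y)\circ z$ and $x\circ(y\circ z)$ into four terms each and apply the left Zinbiel identity $x\cdot(y\cdot z)=(x\cdot y)\cdot z+(y\cdot x)\cdot z$ to each ``inner'' product of the form $u\cdot(v\cdot w)$; the resulting six summands on each side agree, which is the classical fact that the symmetrization of a left Zinbiel product is commutative and associative. Skew-symmetry of $[\,,\,]$ is immediate; for the Jacobi identity one expands $[[x,y],z]+[[y,z],x]+[[z,x],y]$ into twelve terms and pairs them using the pre-Lie identity $x*(y*z)-(x*y)*z=y*(x*z)-(y*x)*z$, which is the standard commutator construction from a left pre-Lie product. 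Both of these are well-known and require no input from the compatibility axioms.

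The substantive content is the Leibniz rule. I would first apply the second compatibility $(u\cdot v+v\cdot u)*w=u\cdot(v*w)+v\cdot(u*w)$ with $(u,v,w)=(y,z,x)$ to the left-hand side, obtaining
\[
[x,y\circ z]=x*(y\cdot z)+x*(z\cdot y)-y\cdot(z*x)-z\cdot(y*x).
\]
On the right-hand side, the eight-term expansion
\[
[x,y]\circ z+y\circ[x,z]=(x*y)\cdot z-(y*x)\cdot z+z\cdot(x*y)-z\cdot(y*x)+y\cdot(x*z)-y\cdot(z*x)+(x*z)\cdot y-(z*x)\cdot y
\]
simplifies by applying the first compatibility $(u*v-v*u)\cdot w=u*(v\cdot w)-v\cdot(u*w)$ twice: once with $(u,v,w)=(x,y,z)$ to rewrite $(x*y)\cdot z-(y*x)\cdot z=x*(y\cdot z)-y\cdot(x*z)$, and once with $(u,v,w)=(x,z,y)$ to rewrite $(x*z)\cdot y-(z*x)\cdot y=x*(z\cdot y)-z\cdot(x*y)$. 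After these substitutions the terms $\pm y\cdot(x*z)$ and $\pm z\cdot(x*y)$ cancel, leaving exactly $x*(y\cdot z)+x*(z\cdot y)-y\cdot(z*x)-z\cdot(y*x)$, matching the left-hand side.

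The main obstacle is purely the bookkeeping in the Leibniz computation: because neither $\cdot$ nor $*$ is commutative or associative, one must choose exactly the right variable substitutions in the two compatibility axioms, and every sign and placement must be tracked carefully. Beyond this, no new conceptual input is required; the two compatibility axioms are tuned precisely so that the Leibniz rule emerges from exactly this combination of the Zinbiel symmetrization and the pre-Lie commutator.
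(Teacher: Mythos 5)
Your proof is correct: the three substitutions you choose are exactly the right ones, and the cancellation of the $\pm\, y\cdot(x*z)$ and $\pm\, z\cdot(x*y)$ terms in the Leibniz computation works precisely as you describe, with the associativity of $\circ$ and the Jacobi identity for $[\;,\;]$ following from the standard Zinbiel and pre-Lie facts you invoke. The paper itself gives no proof of this proposition --- it is quoted from Aguiar --- so there is nothing to compare against beyond noting that your direct verification is the standard argument.
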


There exists a one-to-one correspondence between pre-adm-Poisson
algebras and pre-Poisson algebras.
\begin{pro}
If $(A,\succ,\prec)$ is a pre-adm-Poisson algebra, define
\begin{equation*}
x\cdot y=\frac{1}{2}(x\succ y+y\prec x),\quad x*y=\frac{1}{2}(x\succ y-y\prec x),\quad\forall x,y\in A,
\end{equation*}
then $(A,\cdot,*)$ is a pre-Poisson algebra. Conversely, if $(A,\cdot,*)$ is a pre-Poisson algebra, define
\begin{equation*}
x\succ y=x\cdot y+x*y,\quad x\prec y=y\cdot x-y*x,\quad \forall x,y\in A,
\end{equation*}
then $(A,\succ,\prec)$ is a pre-adm-Poisson algebra.
\end{pro}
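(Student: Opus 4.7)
The plan is to verify the bijection by direct computation, following the polarization--depolarization approach already exploited in the proof of Proposition~\ref{cbialgequ} for the bialgebra level. First, I would observe that the two assignments are mutually inverse linear substitutions on bilinear maps: if one starts from $(\cdot,*)$, defines $(\succ,\prec)$ by $x\succ y=x\cdot y+x*y$ and $x\prec y=y\cdot x-y*x$, and then reapplies the forward formulas, one recovers $\tfrac12(x\succ y+y\prec x)=\tfrac12\big((x\cdot y+x*y)+(x\cdot y-x*y)\big)=x\cdot y$ and $\tfrac12(x\succ y-y\prec x)=x*y$; the other direction is symmetric. Hence the content is that the three pre-adm-Poisson axioms $A(x,y,z)=B(x,y,z)=C(x,y,z)=0$ on $(A,\succ,\prec)$ are equivalent to the four pre-Poisson axioms on $(A,\cdot,*)$.

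For the forward direction, I would introduce the pre-Poisson defect functions
\begin{align*}
Z(x,y,z)&=x\cdot(y\cdot z)-(y\cdot x)\cdot z-(x\cdot y)\cdot z,\\
P(x,y,z)&=x*(y*z)-(x*y)*z-y*(x*z)+(y*x)*z,\\
K_1(x,y,z)&=(x*y-y*x)\cdot z-x*(y\cdot z)+y\cdot(x*z),\\
K_2(x,y,z)&=(x\cdot y+y\cdot x)*z-x\cdot(y*z)-y\cdot(x*z).
\end{align*}
Substituting $x\cdot y=\tfrac12(x\succ y+y\prec x)$ and $x*y=\tfrac12(x\succ y-y\prec x)$, I would expand each double product and verify by linear algebra that each of $Z,P,K_1,K_2$ is an explicit linear combination of the quantities $A(\sigma(x),\sigma(y),\sigma(z))$, $B(\sigma(x),\sigma(y),\sigma(z))$, $C(\sigma(x),\sigma(y),\sigma(z))$ taken over permutations $\sigma\in\perm_3$. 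This is exactly the analogue of the coefficient table displayed in the proof of Proposition~\ref{cbialgequ}.

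For the converse, the cleanest strategy is to leverage Propositions~\ref{proposition1} and the representation-theoretic reformulation of pre-adm-Poisson algebras. Starting from a pre-Poisson algebra $(A,\cdot,*)$, the bilinear operation $x\star y=x\succ y+x\prec y=(x\cdot y+x*y)+(y\cdot x-y*x)$ has symmetrization $x\circ y=x\cdot y+y\cdot x$ and skew-symmetrization $[x,y]=x*y-y*x$, so it assembles into the adm-Poisson algebra corresponding to the underlying Poisson algebra $(A,[\,,\,],\circ)$ via Proposition~1.3. It then remains to check that $(L_\succ,R_\prec,A)$ is a representation of this adm-Poisson algebra; by the converse half of the above proposition this amounts to verifying $A(x,y,z)=B(x,y,z)=C(x,y,z)=0$, which should follow by inverting the linear combinations used in the forward direction.

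The main obstacle is the asymmetric count: there are three pre-adm-Poisson axioms but four pre-Poisson axioms, so the equivalence forces the four pre-Poisson defects to be captured by only three pre-adm-Poisson defects once one exploits the $\perm_3$-symmetries concealed in the $\tfrac13$-weighted terms of $A,B,C$. Determining the correct coefficient matrix is the computational heart of the proof; however, once written down it is entirely mechanical, and the existence of such a matrix is essentially dictated by the analogous correspondence between Poisson algebras and adm-Poisson algebras given in Proposition~1.3.
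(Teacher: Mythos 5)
Your plan is correct and is essentially the paper's own proof, which consists of the single line ``It is straightforward'': the observation that the two substitutions are mutually inverse, followed by the mechanical check that the defect functions $Z,P,K_1,K_2$ of the pre-Poisson axioms and $A,B,C$ of the pre-adm-Poisson axioms span the same space of multilinear identities under permutations of the arguments, is exactly the intended direct verification (and your detour through $(L_\succ,R_\prec,A)$ being a representation of the sub-adjacent adm-Poisson algebra is only a relabeling of $A=B=C=0$, as you note). The one caution is that the existence of the coefficient matrix is not literally ``dictated'' by the algebra-level Poisson/adm-Poisson correspondence, which concerns the unsplit structures; it must actually be computed, or else deduced from the functoriality of the disuccessor construction of \cite{BBGN} invoked in the paper's remark on the operad of pre-adm-Poisson algebras.
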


\begin{proof}
It is straightforward.
\end{proof}

\begin{thm}\label{thm:pre-adm} Let $(P,\c)$ be an adm-Poisson algebra and $\theta:V\to P$ be an
$\mathcal{O}$-operator of the adm-Poisson algebra $P$ associated
to the representation $(\frkl,\frkr,V)$. Then there exists a
pre-adm-Poisson algebra structure on $V$ given by
\begin{equation}\label{eq:Vp}
u\succ v=\frkl(\theta(u))v,\;\;u\prec v=\frkr(\theta(v))u,\;\;\forall\;
u,v\in V.\end{equation} So there is the sub-adjacent adm-Poisson
algebra structure on $V$ given by Eq.~(\ref{eq:sum}) and $\theta$
is a homomorphism of adm-Poisson algebras. Moreover,
$\theta(V)=\{\theta(v)|v\in V\}\subset P$ is an adm-Poisson
subalgebra of $(P,\c)$ and there is an induced pre-adm-Poisson
algebra structure on $\theta(V)$ given by
\begin{equation}\label{eq:Ap}\theta(u)\succ \theta(v)=\theta(u\succ v),\;\;\theta(u)\prec \theta(v)=\theta(u\prec v),\;\;\forall\; u,v\in V.\end{equation}
Its corresponding sub-adjacent adm-Poisson algebra structure on
$T(V)$ given by Eq.~(\ref{eq:sum}) is just the adm-Poisson
subalgebra structure of $(P,\c)$ and $\theta$ is a homomorphism of
pre-adm-Poisson algebras.\end{thm}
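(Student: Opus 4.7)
The proof divides naturally into four parts, the first and most computational being the verification of the pre-adm-Poisson axioms on $V$.

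My plan is to verify Eqs.~\eqref{eq:presp1}--\eqref{eq:presp3} for $\succ, \prec$ defined by \eqref{eq:Vp} by substituting the definitions and exploiting the $\mathcal{O}$-operator identity together with the three representation axioms \eqref{c2}--\eqref{c4}. The key observation is that, because $\theta$ is an $\mathcal{O}$-operator, we have
\begin{equation*}
\theta(u\succ v) + \theta(u\prec v) = \theta\bigl(\mathfrak{l}(\theta(u))v + \mathfrak{r}(\theta(v))u\bigr) = \theta(u)\star\theta(v),
\end{equation*}
so any expression of the form $\mathfrak{l}(\theta(u\succ v + u\prec v))$ can be replaced by $\mathfrak{l}(\theta(u)\star\theta(v))$, which is then rewritten via \eqref{c2} as a combination of $\mathfrak{l}(\theta(u))\mathfrak{l}(\theta(v))$ and mixed terms. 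A careful bookkeeping shows that \eqref{eq:presp1} for $A(u,v,w)$ is obtained by applying \eqref{c2} with $x=\theta(u)$, $y=\theta(v)$ to the vector $w$; similarly \eqref{eq:presp2} follows from \eqref{c3} and \eqref{eq:presp3} follows from \eqref{c4}. Thus $(V,\succ,\prec)$ is a pre-adm-Poisson algebra.

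Once this is established, the sub-adjacent adm-Poisson algebra structure $\star_V$ on $V$ is given by Eq.~\eqref{eq:sum}, and the $\mathcal{O}$-operator identity rewritten as $\theta(u\star_V v) = \theta(u)\star\theta(v)$ is precisely the statement that $\theta\colon (V,\star_V) \to (P,\star)$ is a homomorphism of adm-Poisson algebras. In particular $\theta(V)$ is closed under $\star$, hence is an adm-Poisson subalgebra of $P$.

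For the induced structure on $\theta(V)$, the main subtlety is well-definedness of \eqref{eq:Ap}: I need to show that if $\theta(u_1)=\theta(u_2)$ and $\theta(v_1)=\theta(v_2)$, then $\theta(u_1\succ v_1)=\theta(u_2\succ v_2)$, and similarly for $\prec$. For the left slot this is immediate since $\mathfrak{l}(\theta(u_1))=\mathfrak{l}(\theta(u_2))$. For the right slot, I would use the $\mathcal{O}$-operator equation to write
\begin{equation*}
\theta(\mathfrak{l}(\theta(u))v_i) = \theta(u)\star\theta(v_i) - \theta(\mathfrak{r}(\theta(v_i))u), \qquad i=1,2,
\end{equation*}
and then both terms on the right depend on $v_i$ only through $\theta(v_i)$, giving equality. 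The analogous argument handles $\prec$. This is the step I expect to be the main obstacle, but it is routine once one thinks to invoke the $\mathcal{O}$-operator identity again.

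Finally, the pre-adm-Poisson axioms on $\theta(V)$ follow by applying $\theta$ to the corresponding axioms on $V$ (using well-definedness just established), the sub-adjacent adm-Poisson algebra of $(\theta(V),\succ,\prec)$ coincides with the subalgebra structure inherited from $P$ by the computation $\theta(u)\succ\theta(v)+\theta(u)\prec\theta(v)=\theta(u\succ v+u\prec v)=\theta(u\star_V v)=\theta(u)\star\theta(v)$, and $\theta$ is a homomorphism of pre-adm-Poisson algebras by the very definition of the induced operations in \eqref{eq:Ap}.
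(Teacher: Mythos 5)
Your proposal is correct and follows essentially the same route as the paper: the three pre-adm-Poisson axioms on $V$ are verified by expanding the definitions, combining the $\succ$ and $\prec$ terms via the $\mathcal{O}$-operator identity $\theta(\frkl(\theta(u))v+\frkr(\theta(v))u)=\theta(u)\star\theta(v)$, and then invoking Eqs.~(\ref{c2})--(\ref{c4}) with $x=\theta(u)$, $y=\theta(v)$ applied to $w$, exactly as in the paper's computation for Eq.~(\ref{eq:presp1}). The only difference is that you explicitly check well-definedness of the induced operations on $\theta(V)$ (correctly, by invoking the $\mathcal{O}$-operator identity once more), a point the paper subsumes under ``the rest is straightforward.''
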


\begin{proof}
%We need to prove that Eqs.~(\ref{eq:presp1})-(\ref{eq:presp3})
%hold for $(V,\succ,\prec)$. %Denote by $u\star_T v:=u\succ v+u\prec v$, then by the fact that $\theta(u\star_T v)=\theta(u)\star\theta(v)$ and
By Eq. $(\ref{c2})$, for all $u,v,w\in V$, we have
\begin{eqnarray*}
     &&-(u\succ v)\succ w-(u\prec v)\succ w+u\succ(v\succ w)\\
     &&+\frac{1}{3}\big(u\succ(w\prec v)-w\prec(u\succ v)-w\prec(u\prec v)-v\succ(u\succ w)+v\succ(w\prec u)\big)\\
&=&-\frkl(\theta(\frkl(\theta (u))v))w-\frkl(\theta(\frkr(\theta v)u))w+\frkl(\theta (u))\frkl(\theta (v))w+\frac{1}{3}\big(\frkl(\theta (u))\frkr(\theta (v))w\\
 &&-\frkr(\theta(\frkl(\theta(u))v))w-\frkr(\theta(\frkr(\theta (v))u))w-\frkl(\theta (v))\frkl(\theta(u))w+\frkl(\theta(v))\frkr(\theta(u))w\big)\\
&=&-\frkl(\theta (u)\c\theta (v))+\frkl(\theta (u))\frkl(\theta(v))+\frac{1}{3}\big((\frkl(\theta(v))\frkr(\theta (u))-\frkr(\theta(u)\c\theta(v))\\
 &&-\frkl(\theta(v))\frkl(\theta (u))+\frkl(\theta(v))\frkr(\theta(u)))\big)w=0.
\end{eqnarray*}
Hence Eq.~\eqref{eq:presp1} holds. Similarly, by Eq. $(\ref{c3})$,
Eq.~\eqref{eq:presp2} holds and by Eq. $(\ref{c4})$,
Eq.~\eqref{eq:presp3} holds. Therefore, $(V,\prec, \succ)$ is a
pre-adm-Poisson algebra. The rest is straightforward.
\end{proof}

\delete{

We give the following two conclusions as the direct consequences
of Theorem~\ref{thm:pre-adm} by omitting proofs since they are
similar as in the case of associative algebras (\cite[Corollary
3.1.3 and Theorem 4.1.1]{Bai2}).

\cm{Please consider whether we use the former expression with
proofs or the present expression without proofs.}

\liu{In my opinion, the present expression without proofs is suitable because these two corollaries are not the important conclusions in this section.}
\begin{cor}\label{cor:exist}
Let $(P,\c)$ be an adm-Poisson algebra. Then there exists a
compatible pre-adm-Poisson algebra structure on $P$ if and only if
there exists an invertible $\mathcal{O}$-operator of $(P,\c)$.
\end{cor}

\delete{
\begin{pf}
Let $\theta:V\to P$ be an invertible $\mathcal{O}$-operator on the
adm-Poisson algebra $(P,\c_P)$ associated to representation
$(l,r,V)$. Then the operations $\succ$ and $\prec$ given by
%Eq.~(\ref{eq:Vp}) defines a pre-adm-Poisson algebra structure on
%$V$. Moreover, there is a pre-adm-Poisson algebra structure on
%$T(V)=P$ given by Eq.~(\ref{eq:Ap}), that is,
\begin{equation*}
x\succ y=\theta(l(x)(\theta^{-1}(y))),\quad x\prec
y=\theta(r(y)(\theta^{-1}(x))),\quad \forall\ x,y\in P
\end{equation*}
%Moreover, for all $x,y\in P$, we have
%$$x\succ y+x\prec y=\theta(l(x) \theta^{-1}(y)+r(y)\theta^{-1}(x))=\theta(\theta^{-1}(x))\cdot \theta(\theta^{-1}(y))=x\c_P y.$$
%Hence the sub-adjacent adm-Poisson algebra of $(P,\succ,\prec)$ is
%$(P,\c_P)$.
defines a compatible pre-adm-Poison algebra.

Conversely, let $(P,\succ,\prec)$ be a pre-adm-Poisson algebra
such that $(P,\c)$ is the sub-adjacent adm-Poisson algebra. Then
by Corollary~\ref{cor:id},  ${\rm id}$ is an
$\mathcal{O}$-operator of $(P,\c)$ associated to the
representation $(L_\succ,R_\prec,P)$.
\end{pf}}

\begin{cor}  Let $(P,\c)$ be an adm-Poisson algebra and $\omega$ be a
nondegenerate skew-symmetric bilinear form satisfying Eq.~
(\ref{eq:Conn}). Then there exists a pre-adm-Poisson algebra
structure $\succ,\prec$ on $P$ given by
\begin{equation}\label{eq:Conn-pre}\omega(x\succ y,z)=\omega(y, z\c x),\;\; \omega(x\prec
y,z)=\omega(x,y\c z),\;\; \forall x,y,z\in A\end{equation} such
that the sub-adjacent adm-Poisson algebra is $(P,\c)$.
\end{cor}

\delete{

\begin{proof} Define a linear map $\theta:A\rightarrow A^*$ by
$$\langle \theta(x),y\rangle=\omega (x,y),\;\;\forall x,y\in A.$$ Then  $\theta^{-1}$ is an $\mathcal O$-operator of  $(P,\c)$ associated to the bimodule
$(-R^*,-L^*, P^*)$. By Corollary~ \ref{cor:exist}, there is a
pre-adm-Poisson
 algebra structure $\succ,\prec$ on $(P,\c)$ given by
$$x\succ y=-\theta^{-1}R^*(x)\theta(y),\;\;x\prec y=-\theta^{-1}L^*(y)\theta(x),\;\;\forall
x,y\in P,$$ which gives exactly Eq.~(\ref{eq:Conn-pre}) such that
the sub-adjacent adm-Poisson algebra is $(P,\c)$.
\end{proof}}}

Finally we give the following construction of skew-symmetric
solutions of adm-Poisson Yang-Baxter equation (hence adm-Poisson
bialgebras) from pre-adm-Poisson algebras.

\begin{pro}
 Let $(A,\succ,\prec)$ be a pre-adm-Poisson algebra.
Then
\begin{equation}
r=\sum_{i=1}^n (e_i\otimes e_i^*-e_i^*\otimes e_i)\end{equation}
is a solution of adm-Poisson Yang-Baxter equation in the
adm-Poisson algebra $A^c \ltimes_{-R_\prec^*,-L_\succ^*} A^*$,
where $\{e_1,\cdots, e_n\}$ is a basis of $A$ and $\{e_1^*,\cdots,
e_n^*\}$ is its dual basis.
\end{pro}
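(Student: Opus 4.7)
The plan is to derive this as a direct consequence of Corollary \ref{cor:id} combined with Theorem \ref{theorem2}, so no new calculation is needed.

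First I would recall that by Corollary \ref{cor:id}, the identity map ${\rm id}:A\to A$ is an $\mathcal{O}$-operator of the sub-adjacent adm-Poisson algebra $(A^c,\star)$ associated to the representation $(L_\succ,R_\prec,A)$. In the setup of Theorem \ref{theorem2}, we take $P=A^c$, $V=A$, $(\mathfrak{l},\mathfrak{r})=(L_\succ,R_\prec)$, and $\theta={\rm id}$. Under the standard identification $\mathrm{Hom}(A,A)\cong A\otimes A^*$, the identity map corresponds precisely to the element $\theta=\sum_{i=1}^n e_i\otimes e_i^*\in A\otimes A^*$, which we view as sitting inside $(A^c\oplus A^*)\otimes (A^c\oplus A^*)$.

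Then I would invoke Theorem \ref{theorem2} directly: since ${\rm id}$ is an $\mathcal{O}$-operator associated to $(L_\succ,R_\prec,A)$, the element
\begin{equation*}
r=\theta-\tau(\theta)=\sum_{i=1}^n e_i\otimes e_i^*-\sum_{i=1}^n e_i^*\otimes e_i
\end{equation*}
is a skew-symmetric solution of the adm-Poisson Yang--Baxter equation in the semidirect product $A^c\ltimes_{-R_\prec^*,-L_\succ^*} A^*$, which is exactly the claim. The independence from the choice of basis is built into the identification of $\theta$ with the identity map.

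There is no real obstacle here; the result is essentially an application of the general correspondence established in Section~\ref{Opre}. The only thing worth double-checking is that the dual representation that appears in the semidirect product in Theorem \ref{theorem2}, namely $(-R_\prec^*,-L_\succ^*,A^*)$, matches the one named in the statement, and that the sign conventions in the identification $\mathrm{Hom}(A,A)\cong A\otimes A^*$ yield precisely $\sum_i e_i\otimes e_i^*$ (and not, for instance, its transpose); both are immediate from the conventions fixed in the introduction.
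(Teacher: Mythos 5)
Your proof is correct and follows exactly the paper's own argument: identify $\mathrm{id}$ with $\sum_{i=1}^n e_i\otimes e_i^*$, invoke Corollary~\ref{cor:id} to see that $\mathrm{id}$ is an $\mathcal{O}$-operator associated to $(L_\succ,R_\prec,A)$, and then apply Theorem~\ref{theorem2} with $\theta=\mathrm{id}$ to get that $r=\theta-\tau(\theta)$ solves the adm-PYBE in $A^c\ltimes_{-R_\prec^*,-L_\succ^*}A^*$. The paper's proof is a two-line version of precisely this reasoning, so there is nothing to add.
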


\begin{proof} %By Corollary~\ref{cor:id},
%${\rm id}$ is an $\mathcal{O}$-operator of the sub-adjacent
%adm-Poisson algebra $(A^c,\c)$ associated to the representation
%$(L_\succ,R_\prec,A)$. Then the conclusion
Note that ${\rm id}=\sum\limits_{i=1}^n e_i\otimes e_i^*$. The
conclusion follows from Theorem~\ref{theorem2} and
Corollary~\ref{cor:id}.
 \end{proof}

\bigskip

 \noindent
 {\bf Acknowledgements.}  This work is supported by
NSFC (11931009, 11901501).  C. Bai is also supported by the
Fundamental Research Funds for the Central Universities and Nankai
Zhide Foundation.

\end{document}